\newcolumntype{L}[1]{>{\raggedright\let\newline\\\arraybackslash\hspace{0pt}}m{#1}}
\newcolumntype{C}[1]{>{\centering\let\newline\\\arraybackslash\hspace{0pt}}m{#1}}
\newcolumntype{R}[1]{>{\raggedleft\let\newline\\\arraybackslash\hspace{0pt}}m{#1}}
\newtheorem{approximation}{Approximation}
\renewcommand{\qed}{\hfill $\square$}
\newcommand{\revision}[1]{{\color{black}{#1}}}
\begin{document}

\RUNTITLE{A Planner-Trader Decomposition for Multi-Market Hydro Scheduling}
\RUNAUTHOR{Schindler, Rujeerapaiboon, Kuhn, Wiesemann}
\TITLE{A Planner-Trader Decomposition for\\ Multi-Market Hydro Scheduling}


\ARTICLEAUTHORS{
\AUTHOR{Kilian Schindler}
\AFF{\textit{Risk Analytics and Optimization Chair, \'Ecole Polytechnique F\'ed\'erale de Lausanne, Switzerland} \\
\EMAIL{kilian.schindler@epfl.ch}}

\AUTHOR{Napat Rujeerapaiboon}
\AFF{\textit{Department of Industrial Systems Engineering and Management, National University of Singapore, Singapore} \\
\EMAIL{napat.rujeerapaiboon@nus.edu.sg}}

\AUTHOR{Daniel Kuhn}
\AFF{\textit{Risk Analytics and Optimization Chair, \'Ecole Polytechnique F\'ed\'erale de Lausanne, Switzerland} \\
\EMAIL{daniel.kuhn@epfl.ch}}

\AUTHOR{Wolfram Wiesemann}
\AFF{\textit{Imperial College Business School, Imperial College London, United Kingdom} \\
\EMAIL{ww@imperial.ac.uk}}
}

\ABSTRACT{Peak/off-peak spreads on European electricity forward and spot markets are eroding due to the ongoing nuclear phaseout in Germany and the steady growth in photovoltaic capacity. The reduced profitability of peak/off-peak arbitrage forces hydropower producers to recover part of their original profitability on the reserve markets. We propose a bi-layer stochastic programming framework for the optimal operation of a fleet of interconnected hydropower plants that sells energy on both the spot and the reserve markets. The outer layer (the \emph{planner's problem}) optimizes end-of-day reservoir filling levels over one year, whereas the inner layer (the \emph{trader's problem}) selects optimal hourly market bids within each day. Using an information restriction whereby the planner prescribes the end-of-day reservoir targets one day in advance, we prove that the trader's problem simplifies from an infinite-dimensional stochastic program with 25 stages to a finite two-stage stochastic program with only two scenarios. Substituting this reformulation back into the outer layer and approximating the reservoir targets by affine decision rules allows us to simplify the planner's problem from an infinite-dimensional stochastic program with 365 stages to a two-stage stochastic program that can conveniently be solved via the sample average approximation. Numerical experiments based on a cascade in the Salzburg region of Austria demonstrate the effectiveness of the suggested framework.}

\KEYWORDS{Hydro Scheduling; Reserve Markets; Planner-Trader Decomposition; Stochastic Programming.}

\maketitle


\section{Introduction}

For many decades, hydropower plants have offered a flexible source of emission-free electricity at low running costs. With a world-wide output of 4,246.4 TWh, hydropower accounted for 60.4\% of all renewable generation in 2019 \citep{IEA20:explorer}, and several new hydropower plants are currently being taken into service in China, Lao People’s Democratic Republic, Portugal and Turkey \citep{IEA20:hydropower}. Pumped-storage plants, which can flexibly transfer water both downstream (through generators, thereby producing energy) and upstream (through pumps, thus consuming energy), account for approximately 95\% of the world-wide grid energy storage \citep{DOE20:storage}.




Traditionally, pumped-storage hydropower plants benefit from releasing the water downstream for electricity generation at peak times and by pumping the water upstream during off-peak periods for future generation (thus adopting a `\emph{buy low and sell high}' strategy). In doing so, the generation companies can exploit the gaps between peak and off-peak electricity prices to make immediate profits. However, these price spreads have been eroding in several European markets since 2008 for two reasons \citep{Fraunhofer}: \emph{(i)} the phaseout of nuclear power plants in Germany 
has increased base load electricity prices, and \emph{(ii)} the rapid growth in photovoltaic capacity, whose supply significantly overlaps with the daytime peak hours on weekdays, has reduced the peak electricity prices \citep{Energytransition, PVfact}. As a result, hydropower producers have increasingly turned towards the reserve markets to maintain their profitability.

Electricity is traded at multiple time frames. Besides the forward markets, which trade electricity weeks or months ahead of time, the majority of electricity supply and demand is settled day-ahead on \emph{spot markets}.
In practice, however, this cannot be achieved without forecasting errors: unpredicted changes in residential and industrial electricity consumption, failures of power plants, transformers and transmission lines as well as the intermittency of renewable energy sources (such as wind and solar power) all imply that market participants may at times be unable to implement their original plans. These errors are resolved on intra-day markets, which are similar to the spot market but operate with shorter lead times, as well as the \emph{reserve markets}. While the details differ slightly by country, reserve markets are usually divided into primary (frequency containment reserve), secondary and tertiary (both frequency restoration) reserves. Both the primary reserve, which is typically activated within 30 seconds, and the secondary reserve, which is typically activated within 5 minutes, consider time spans of about 15 minutes and are activated automatically. The tertiary reserve, on the other hand, is activated manually within 15 minutes, and it considers time spans of up to one (or, in exceptional cases, several) hour(s). Primary reserves are used to swiftly restore minor grid frequency deviations. Larger deviations are managed via the secondary reserve, and---in the case of major, persistent disruptions and only when the secondary reserves fail to restore the balance between demand and supply---the tertiary reserve.
We refer to \cite{Aasgard19}, \cite{EC16:METIS} and \cite{FERNANDEZMUNOZ2020109662} for further details on the different energy markets.

Generation companies benefit from trading in the reserve markets in two ways. First, depending on the market, they may receive capacity fees regardless of whether the reserve capacities are activated. Second, they earn money proportional to the increase or decrease in their production levels when the reserves are activated. Although most thermal plants can provide all three types of reserve when they are online, their ability to do so is limited by their current power output (which must be below capacity to offer upward corrections and above the minimum stable operation limit to offer downward corrections in the energy output) as well as their ramp rate. Gas turbines are the only thermal plants that can offer tertiary reserve when they are offline. With a start-up time of a few minutes and high ramp rates, on the other hand, hydropower plants are an ideal source for any type of reserve, and they can provide secondary and tertiary reserve even when they are offline. Participation on the reserve markets should ease the pressure on the hydropower plant operators, who are struggling to recover their capital costs as well as their original profitability on the forward and spot markets because of the eroding peak/off-peak spreads.

In this paper, we develop a stochastic program that maximizes the revenues earned by a hydropower producer from simultaneously trading in both the spot and the reserve markets. The resulting optimization problem is computationally challenging for at least three reasons. First, the problem involves a large number of decision stages. Indeed, a planning horizon of one year is required to account for the seasonality of rainfall and electricity prices, while generation and pumping decisions need to be taken on an hourly basis. Second, the problem is affected by significant uncertainty in the electricity prices, the timing and amount of the water inflows ({\em i.e.}, rainfall and snowmelt) as well as the reserve activations. Finally, the problem has random recourse since the water flows between the reservoirs not only depend on the bidding decisions in the spot and reserve markets, but they also depend on the reserve activations by the transmission system operator.

To obtain a tractable solution scheme, we equivalently express the stochastic program of the hydropower producer as a bi-layer problem where the outer layer \emph{planner's problem} is a 365-stage stochastic program with daily resolution that determines end-of-day water levels for all reservoirs, and the inner layer \emph{trader's problem} is a 25-stage stochastic program with hourly resolution that uses the target water levels to determine optimal bids for the spot and reserve markets. Using an information restriction whereby the planner prescribes the target water levels one day in advance, we prove that the trader's problem can equivalently be reduced to a two-stage stochastic program with only two scenarios. By substituting this reduced trader's problem back into the planner's problem, applying an exact perfect information relaxation to all hourly bidding decisions and conservatively approximating the daily water level targets via linear decision rules, we obtain a two-stage stochastic program that can be solved efficiently via the sample average approximation.

The main contributions of this paper may be summarized as follows.
\begin{enumerate}
	\item[\emph{(i)}]
	We propose a stochastic program that maximizes the revenues of a hydropower producer who simultaneously trades in the spot and reserve markets and whose fleet of plants can have an arbitrary topology. Our model faithfully accounts for uncertainty in the electricity prices, the water inflows as well as the reserve activations. To our best knowledge, we propose the first model that accounts for the stochasticity of the reserve activations at an hourly granularity.
	\item[\emph{(ii)}]
	Under the assumption that the end-of-day water levels are chosen one day in advance, we develop a novel planner-trader decomposition that allows us to separately study the reservoir management and trading problems. We equivalently reformulate the trader's problem as a tractable linear program, and we approximate the planner's problem through a combination of sample average approximation and linear decision rules. The resulting model can be readily solved within a few minutes on standard hardware, which is in stark contrast to the vast majority of stochastic hydropower models that account for reserve market participation.
	\item[\emph{(iii)}]
	We report initial numerical results for a cascade of three connected reservoirs operating in the control area of the Austrian Power Grid AG. Our results suggest that significant revenues can be earned on the reserve markets if the stochastic reserve activations are modeled faithfully.
\end{enumerate}

The detailed modeling of uncertainty, which is essential to faithfully model the stochasticity of reserve activations, requires us to make a number of simplifying assumptions. First, we assume that the hydropower producer only participates in the spot and the secondary
reserve market. \revision{While we envision that the inclusion of the intra-day market and the primary and tertiary reserve markets is principally possible, they would result in larger optimization models that likely demand substantially longer solution times.} The inclusion of a forward market, on the other hand, appears to be fundamentally more difficult as it would require the consideration of different time scales. \revision{Second, we simplify the bidding process in both the spot and the reserve markets. Instead of directly modeling the possibility to bid at multiple price levels, we study a single-bid strategy that is finetuned out-of-sample by two hyperparameters.} Third, we omit a number of technical aspects, such as the discrete (on/off) nature of equipment operations, head effects and water flow delays. Such considerations often lead to the inclusion of integer variables and/or non-linear relationships between variables, which would significantly complicate the solution of the stochastic program.

The liberalization of energy markets world-wide has sparked significant academic interest in the optimal management of hydropower plants. Early works focus on participation in the spot markets, and they include scenario tree approaches \citep{Andrieu10, Carpentier13:sp, Aasgard14}, stochastic dynamic programming \citep{Pritchard05} and stochastic dual dynamic programming approaches \citep{Pereira91, Gjelsvik10, Philpott12a, Shapiro13, deMatos15, Philpott18} as well as robust and stochastic programs using affine decision rules \citep{Pan15, Lorca16, Delage17, Delage18}.

In departure from the aforementioned single-market studies, \cite{Wozabal13} consider a fleet of hydropower plants that participates in both the spot and the intra-day market. The problem comprises uncertain water inflows and electricity prices. The time horizon of one year is subdivided into inter-day reservoir management problems (at a daily resolution) as well as intra-day bidding problems (at an hourly resolution). The authors solve the problem by a combination of stochastic dual dynamic programming and approximate dynamic programming. The runtimes are reported to range between 4 and 40 hours.

The recent shift of the hydropower producers' attention towards the reserve markets has led to a number of multi-market models that consider both the spot and the reserve markets. Below, we distinguish between deterministic models and stochastic models using scenario trees, stochastic dynamic programming as well as stochastic dual dynamic programming.

In return for disregarding the stochasticity of the problem data, deterministic hydropower models typically offer a superior representation of the physical details of a hydropower plant, such as the consideration of discrete operation modes, head effects as well as a distinction between fixed-speed and variable-speed units. \cite{Chazarra14:opt_op, Chazarra18:econ_viab} propose daily mixed-integer programs for pumped-storage hydropower plants in Spain that model the hourly bidding on the spot and secondary reserve markets. \cite{Schillinger17}
apply a yearly linear programming reservoir management problem with weekly nonlinear bidding subproblems that comprise the spot, intra-day as well as primary, secondary and tertiary reserve markets to a hydropower plant in Switzerland. \cite{Fodstad18} propose a determinstic linear program with a one year time horizon that determines the optimal hourly bidding on the day-ahead, intra-day and reserve markets for a hydropower plant in Norway. Since deterministic models neglect the uncertainty in the water inflows, electricity prices and reserve activations, they adopt a perfect information relaxation that may lead to overly optimistic estimates of the revenues achievable by a hydropower plant.





\cite{Chazzara16} develop a two-stage stochastic program for the short-term scheduling of a fleet of hydropower plants that participates in the spot and the reserve markets. The model comprises a time horizon of one week, where the decisions of the first day are taken in the first stage and the decisions of the remaining six days are taken in the second stage, respectively. \cite{Chazarra18} propose a single-stage stochastic program for a hydropower plant that participates in the spot and secondary reserve market. The authors subdivide the planning horizon into daily problems with hourly resolution. The daily problems optimize the conditional value-at-risk of the revenues over 100 scenarios, subject to uncertainty in the spot and reserve prices. The runtime of the daily model varies between 3 and 17 minutes. 
\cite{Klaeboe19}, finally, consider a three-stage stochastic program where the hydropower producer submits the spot and reserve bids in the first and third stage, respectively, while the spot market prices and reserve market prices are observed in the second and third stage, respectively. The authors report computation times between 2 and 7 hours. While scenario-tree based stochastic programs allow to capture complex relationships between the uncertain problem parameters, the scenario discretization typically fails to provide an implementable policy, and the problem size tends to \revision{grow} exponentially with the considered time horizon. As a result, most studies consider a small number of stages and model the stochastic price and reserve activation processes at a coarser (\emph{e.g.}, daily) granularity.

\cite{abgottspon12} propose a stochastic dynamic program for a hydropower plant with a single reservoir that participates in the spot and secondary reserve markets. The authors consider a yearly problem whose weekly time stages are connected through a single state variable describing the water level of the reservoir. Each weekly problem is modelled as a two-stage stochastic program whose first stage decides upon the reserve market offering under uncertain prices and whose second stage manages the water flows. The authors report that the entire model can be solved within 20 minutes via large-scale parallelization. Similar to \cite{abgottspon12}, \cite{Helseth17} study a hydropower plant with a single reservoir that participates in the spot and secondary reserve markets. The authors also consider a yearly problem with weekly time periods, however the state variables now comprise the weekly average energy prices in addition to the reservoir level. The authors report computation times of 24 to 40 hours. While stochastic dynamic programs allow to model complex non-convex phenomena in the weekly subproblems, the curse of dimensionality implies that the runtimes increase quickly with the dimension of the state space, which makes the stochastic dynamic programs primarily suited for small fleets of plants with a coarse description of the involved stochastic processes. Moreover, the need to discretize the water levels can introduce numerical inaccuracies.



\cite{Abgottspon14} compare the performance of a stochastic dynamic and a stochastic dual dynamic program that both manage a hydropower plant with a seasonal and a daily reservoir over a yearly time horizon with weekly time periods. The weekly subproblems are modeled as two-stage stochastic programs that decide upon the spot and reserve market participation in an hourly resolution under uncertain water inflows and energy prices. The overall models record the water level of the seasonal reservoir in the state variable and optimize a nested conditional value-at-risk. Computation times are not reported. \cite{Helseth15} propose a stochastic dual dynamic program that manages a multi-reservoir fleet of hydro plants over the course of one year in weekly time stages. The state variables of the problem record the weekly water levels of the reservoirs as well as the spot prices, while the weekly subproblems manage the commitments on the spot and the primary reserve markets. The overall problem is solved in 42 hours. In a follow-up paper, \cite{Helseth16} modify the stochastic dual dynamic program of \cite{Helseth15} to incorporate stochastic reserve prices. The resulting model can be solved within 55-60 hours. While stochastic dual dynamic programs scale better than stochastic dynamic programs, they too tend to be limited to small and medium-sized fleets that are managed over coarse (typically weekly) time intervals with relatively crude approximations of the involved stochastic processes. This is particularly critical for the reserve market activations in small (\emph{e.g.}, daily) reservoirs, where a daily or weekly time granularity may be too coarse to ensure an implementable (hourly) policy.

\revision{
The remainder of the paper unfolds as follows. Section~\ref{sec:models} models the overall bidding problem of the hydropower producer. Section~\ref{sec:bi-layer} transforms this formulation to an equivalent bi-layer stochastic program. Section \ref{sec:reduction_collective} reformulates the inner layer of this stochastic program as a tractable linear program. Section~\ref{sec:num_exp_formulation} presents a tractable approximation for the outer layer of the stochastic program as well as a rolling horizon solution framework. Section~\ref{sec:case_study} applies our method to a hydropower cascade in Austria. Auxiliary results and all proofs, finally, are relegated to the appendix.
}

\noindent \textbf{Notation.} We denote by $\bm{0}$ and $\bm{1}$ the appropriately sized vectors of all zeros and all ones, respectively. The Hadamard product is denoted by $``\circ "$ and refers to the element-wise vector multiplication. For any $x \in \mathbb{R}$, we set $x^+ = \max \{x, 0 \}$ and $x^- = \max \{-x, 0 \}$ such that $x=x^+-x^-$. All random objects are defined on a probability space $(\Omega, \mathcal{F},\mathbb{P})$ consisting of a sample space $\Omega$, a $\sigma$-algebra $\mathcal{F}\subseteq 2^\Omega$ of events and a probability measure $\mathbb{P}$ on $\mathcal{F}$. For a $\sigma$-algebra $\mathcal{G} \subseteq \mathcal{F}$, we denote by $\mathcal{L}^k(\mathcal{G})$ the set of all integrable, $\mathcal{G}$-measurable functions $g : \Omega \to \mathbb{R}^k$, and if $k=1$, we simply write~$\mathcal{L}(\mathcal{G})$.

\revision{\section{Bidding Model}\label{sec:models}}

We consider a hydropower generation company that operates a cascade of reservoirs over a time horizon spanning at least one year to account for the seasonality of electricity prices and water inflows. We partition the planning horizon into days indexed by $d \in \mathcal{D} : = \{ 1,\ldots,D\}$, as well as into hours indexed by $t \in \mathcal{T} : = \{ 1,\ldots,T\}$. To simplify the exposition, we assume that $T$ is divisible by $D$, that is, the planning horizon accommodates an integral number of days. We further denote by $H=T/D$ the number of hours per day, but we emphasize that all subsequent results remain valid for any~$H\in\mathbb{N}$, {\em e.g.}, if we partition each day into 15~minute intervals. The present time is modeled by a fictitious hour $t = 0$ on day $d = 0$. It is followed by hour $t=1$ on day $d=1$, that is, the first hour of the planning horizon. No decisions are taken at time $t = 0$; its inclusion merely serves to unify our exposition so that decisions and constraints at time $t = 1$ can be expressed similar to those in any other time period. For any $t,t' \in \{ 0 \} \cup \mathcal{T}$ with $t \leq t'$, we denote the set of hours between and including $t$ and $t'$ as $[t, t'] = \{t,t+1,\ldots,t' \}$. Note in particular that $[t,t] = \{ t \}$. Furthermore, we denote by $d(t)$ the day containing hour $t$ and by $\mathcal{T}(d)$ the set of all hours in day $d$. Finally, we introduce the following functions of $d \in \mathcal{D}$ and $t \in \mathcal{T}$:
\begin{equation*}
\begin{aligned}
&\uparrow(d) = \text{ last hour of day $d$,} \qquad &&\uparrow(t) = \text{ last hour of day $d(t)$,}\\
&\downarrow(d) = \text{ first hour of day $d$,} \qquad &&\downarrow(t) = \text{ first hour of day $d(t)$,}\\
&\Downarrow(d) = \text{ last hour of day $d-1$,} \qquad &&\Downarrow(t) = \text{ last hour of day $d(t)-1$}.
\end{aligned}
\end{equation*}
By slight abuse of notation, the operators $\uparrow(\cdot)$, $\downarrow(\cdot)$ and $\Downarrow(\cdot)$ are defined both on $\mathcal D$ and $\mathcal T$. The correct interpretation of these operators will always be clear from the context. Observe also that the two $\Downarrow(\cdot)$ operators are not strictly needed because they can be expressed in terms of $\downarrow(\cdot)$ via the identities $\Downarrow(d) = \, \uparrow(d-1)$ and $\Downarrow(t) = \, \uparrow(d(t)-1)$. However, they are useful to simplify notation.

We represent the topology of the interconnected reservoirs by a directed acyclic graph with a set of nodes $\mathcal{R}$ representing the reservoirs and a set of arcs $\mathcal{A} \subseteq \mathcal{R} \times \mathcal{R}$ representing the hydraulic connections between the reservoirs. We denote the cardinalities of $\mathcal{R}$ and $\mathcal{A}$ by $R$ and $A$, respectively. A tuple $(r, r')$ is an arc in $\mathcal{A}$ if $r$ is an upstream reservoir of $r'$ (and hence $r'$ is a downstream reservoir of $r$). Without loss of generality, we assume that each arc is equipped with a generator that converts kinetic energy of water flowing downstream into electric energy and with a pump that uses electric energy to lift water upstream. Note that some of these generators and pumps may have zero capacity, which models situations in which these devices are absent. Without loss of generality, we further assume that the node $R$ is the unique sink of the graph. This dummy reservoir has infinite volume and models the discharge into a river at the end of the reservoir cascade. Accordingly, arcs discharging into this dummy reservoir may be equipped with generators, but not with pumps. In line with real hydropower plants, we assume that every reservoir (except the aforementioned dummy reservoir) has at least one outgoing arc along which water can be discharged whenever required. The topology of the cascade can be encoded conveniently through an $R\times A$ incidence matrix $\mathbf{M}$, where for each $(r,a) \in \mathcal{R} \times \mathcal{A}$ we have that
\begin{equation*}
[\mathbf{M}]_{r,a} = m_{r,a} = \left\{ \begin{array}{cl} -1 & \text{ if arc $a$ leaves reservoir $r$,} \\[2mm] +1 & \text{ if arc $a$ enters reservoir $r$,} \\[2mm] 0 & \text{ otherwise.} \end{array} \right.
\end{equation*}

Each reservoir is characterized by---potentially time-dependent---lower and upper bounds on its filling level. The time dependence of these bounds may reflect changing safety margins that are imposed to account for the seasonality of inflows, scheduled maintenance work or environmental regulations. For each hour $t \in \mathcal{T}$, we collect the lower and upper reservoir bounds in the vectors $\underline{\bm{w}}_t \in \mathbb{R}_+^R$~$[\text{m}^3]$ and $\overline{\bm{w}}_t \in (\mathbb{R}_+\cup\{+\infty\})^R$~$[\text{m}^3]$, respectively. The bounds for the dummy reservoir are $\underline{w}_{t,R} =0$ and $\overline{w}_{t,R} = +\infty$, \revision{that is, the river can absorb arbitrary amounts of water.} The generators installed along the arcs of our cascade are characterized by the upper bounds $\overline{\bm{g}}_t \in \mathbb{R}_+^A$~$[\text{m}^3]$ on the hourly water flows and the generator efficiencies $\bm{\eta}_t \in \mathbb{R}_+^A$~$[\text{MWh}/\text{m}^3]$. Similarly, the pumps are described by the upper bounds $\overline{\bm{p}}_t \in \mathbb{R}_+^A$~$[\text{m}^3]$ on the hourly water flows and the 
\emph{inverse} pump efficiencies $\bm{\zeta}_t \in \mathbb{R}_+^A$~$[\text{MWh}/\text{m}^3]$. The laws of thermodynamics dictate that $\bm{\eta}_t <\bm{\zeta}_t$ for all $t \in \mathcal{T}$.

The decision problem of the hydropower generation company is affected by three types of risk factors with finite expectations, namely fluctuating market prices, \revision{uncertain reserve allocations and reserve activations} as well as natural inflows into the reservoirs caused by unpredictable meteorological phenomena. \revision{Below, we first describe the uncertainties related to the different markets.

We assume that the hydropower pruducer participates in the spot market and in the secondary reserve markets for up- and down-regulation. In central Europe, allocations and prices on these markets are determined by separate auctions for the different hours of the day (consecutive hours could be merged to blocks). The spot market is mediated via a sealed-bid pay-as-clear auction, where demand and supply bids correspond to tuples comprising an energy quantity to be sold or bought and a price. Bids are accepted based on the spot price, which is found by matching demand and supply. The secondary reserve markets, on the other hand, are mediated via open pay-as-bid auctions, where bids correspond to tuples comprising an energy quantity offered as reserve capacity, a capacity price and an activation price. Bids are accepted in the order of their capacity prices until the regulation demand is covered. If grid imbalances require an intervention, the accepted bids are activated in the order of their energy prices until the imbalances are eliminated. This means that bids with low energy prices are more likely to be activated than bids with high energy prices.

A faithful representation of the different auction mechanisms would lead to an excessively complicated model. In order to obtain a realistic model that is amenable to numerical solution, we use the following approximations. \revision{First, we assume that the spot and reserve markets are infinitely liquid. Second,} we model the spot price in any hour~$t$ as an exogenous random variable $\pi_t^{\rm{s}} \in \mathbb{R}$~$[\$/\text{MWh}]$ that is revealed one day in advance at time~$\Downarrow(t)$, and we assume that the energy quantities traded in hour~$t$ on the spot market may adapt to~$\pi_t^{\rm{s}}$. This approximation is reasonable if the producer has no market power and places a continuum of bids corresponding to all possible spot price realizations. \revision{Third,} we model the capacity prices for up- and down-regulation in any hour~$t$ as exogenous random variables~$\pi_t^{\rm{u}} \in \mathbb{R}$~$[\$/\text{MWh}]$ and $\pi_t^{\rm{v}} \in \mathbb{R}$~$[\$/\text{MWh}]$, respectively, which are revealed one day in advance at time~$\Downarrow(t)$, and we assume that the energy quantities offered as reserve capacity in hour~$t$ may adapt to~$\pi_t^{\rm{u}}$ and~$\pi_t^{\rm{v}}$. By slight abuse of terminology, we will sometimes refer to these energy quantities simply as `bids.' We also assume that the allocation of reserve capacity in hour~$t$ is random and can be modeled through exogenous Bernoulli random variables~$\kappa^{\rm u}_t$ and~$\kappa^{\rm v}_t$ with success probabilities~$\hat\kappa^{\rm u}_t=\mathbb E[\kappa^{\rm u}_t]$ and~$\hat\kappa^{\rm v}_t=\mathbb E[\kappa^{\rm v}_t]$, respectively, which are revealed at time~$\downarrow(t)$. The allocated reserve capacity for up- or down-regulation is then obtained by multiplying the offered reserve capacity with the corresponding Bernoulli random variable. This approximation is reasonable if the producer has no market power and places only one bid on the reserve-up market with capacity price~$\pi_t^{\rm{u}}$ and only one bid in the reserve-down market with capacity price~$\pi_t^{\rm{v}}$. In reality, the prices~$\pi_t^{\rm{u}}$ and~$\pi_t^{\rm{v}}$ constitute decisions that have a diminishing impact on the allocation probabilities~$\hat\kappa^{\rm u}_t$ and~$\hat\kappa^{\rm v}_t$, respectively. Treating these quantities as exogenous parameters amounts to prescribing a fixed trade-off between the probability that a given bid is accepted and the financial compensation that is earned if the bid is accepted. In the model to be developed below we will optimize this trade-off via a procedure reminiscent of hyperparameter tuning. As the \revision{fourth} and last approximation, we model the activation prices for up- and down-regulation as exogenous random variables~$\psi_t^{\rm{u}} \in \mathbb{R}$~$[\$/\text{MWh}]$ and $\psi_t^{\rm{v}} \in \mathbb{R}$~$[\$/\text{MWh}]$, respectively, which are revealed in real time. In addition, we model the percentages of time in which the system operator activates up- and down-regulation in hour~$t$ as continuous random variables~$\rho_t^{\rm{u}}$ and~$\rho_t^{\rm{v}}$ with supports~$[0,\overline{\rho}_t^{\rm{u}}]$ and~$[0,\overline{\rho}_t^{\rm{v}}]$ and with mean values~$\hat \rho_t^{\rm{u}}=\mathbb E[\rho_t^{\rm{u}}]$ and~$\hat \rho_t^{\rm{v}}=\mathbb E[\rho_t^{\rm{v}}]$, respectively. The activations~$\rho_t^{\rm{u}}$ and~$\rho_t^{\rm{v}}$ are also revealed in real time. The amount of energy activated for up- and down-regulation in hour~$t$ is obtained by multiplying the allocated reserve capacity with~$\rho_t^{\rm{u}}$ and~$\rho_t^{\rm{v}}$, respectively. This approximation is again reasonable if the producer has no market power and places only one bid on the reserve-up market with activation price~$\psi^{\rm u}_t$ and only one bid in the reserve-down market with activation price~$\psi^{\rm v}_t$. In reality, $\psi^{\rm u}_t$ and~$\psi^{\rm v}_t$ constitute decisions that have a diminishing impact on the expected activations~$\hat \rho_t^{\rm{u}}$ and~$\hat \rho_t^{\rm{v}}$, respectively. Treating these quantities as exogenous parameters amounts to prescribing a fixed trade-off between the probability that an accepted reserve bid is activated and the financial compensation that is earned if the bid is activated. In the model to be developed below we will optimize this trade-off via a procedure reminiscent of hyperparameter tuning.
%
%
Note that, as any bidding zone may either face an over-supply or an under-supply of energy but not both at the same time, the sum of~$\rho_t^{\rm{u}}$ and~$\rho_t^{\rm{v}}$ cannot exceed~$1$. Throughout the paper we thus assume that the pairs of reserve activations on the up- and down-regulation markets have support $\{(\rho_t^{\rm{u}}, \rho_t^{\rm{v}})\in [0,\overline{\rho}^{\rm u}_t]\times [0,\overline{\rho}^{\rm v}_t] : \rho_t^{\rm{u}} + \rho_t^{\rm{v}} \leq 1\}$, \revision{where the upper bounds $\overline{\rho}^{\rm u}_t$ and $\overline{\rho}^{\rm v}_t$ can be set, for example, to quantiles of the historical hourly activation percentages.} \revision{We also assume that the reserve activations are serially independent as well as independent of all other exogenous uncertainties. This assumption may only hold approximately in practice, and we discuss its validity in Appendix~\ref{sec:independence}.}}


\revision{Finally, we denote the natural inflows into the various reservoirs by the vector $\bm{\phi}_t \in \mathbb{R}_+^R$~$[\text{m}^3]$, and we assume that they are revealed a day in advance at time~$\Downarrow(t)$ thanks to an accurate forecast.
%
%
From now on we gather all random variables that are revealed in hour $t \in \{ 0 \} \cup \mathcal{T}$ in the vector}
\begin{equation*}
\bm{\xi}_t = \left\{ \begin{array}{ll} ( \{ (\pi_{\tau}^{\rm{s}}, \pi_{\tau}^{\rm{u}}, \pi_{\tau}^{\rm{v}}, \bm{\phi}_{\tau} )\}_{\tau = 1}^{H}) & \text{ if $t = 0$,} \\[2mm] 
(\psi_t^{\rm{u}}, \psi_t^{\rm{v}}, \rho_t^{\rm{u}}, \rho_t^{\rm{v}}, \{ ( \pi_{\tau}^{\rm{s}}, \pi_{\tau}^{\rm{u}}, \pi_{\tau}^{\rm{v}}, \bm{\phi}_{\tau} ) \}_{\tau = t+1}^{t+H} ) & \text{ if $t \in \Big[ \bigcup_{\tau \in \mathcal{T}} \Downarrow(\tau) \Big] \setminus \{ 0 \}$,} 
 \\[2mm] 
\revision{ (\psi_t^{\rm{u}}, \psi_t^{\rm{v}}, \rho_t^{\rm{u}}, \rho_t^{\rm{v}}, \{ (\kappa^{\rm u}_\tau, \kappa^{\rm v}_\tau) \}_{\tau = t}^{t+H-1} )} & \revision{\text{ if $t \in \Big[ \bigcup_{\tau \in \mathcal{T}} \downarrow(\tau) \Big]$,} }\\[2mm]
 (\psi_t^{\rm{u}}, \psi_t^{\rm{v}}, \rho_t^{\rm{u}}, \rho_t^{\rm{v}}) & \text{ otherwise.} \end{array} \right.
\end{equation*}
\revision{In other words, the spot and capacity prices as well as the inflows of day $d \in \mathcal{D}$ are revealed in the last hour of the previous day, the reserve allocations of day~$d$ are revealed in the first hour of the current day, whereas the activation prices and actual activations are revealed hourly in real time.} We gather all random variables whose values are revealed within the interval $[t,t']$ in the vector $\bm{\xi}_{[t,t']} = ( \bm{\xi}_{t}, \ldots, \bm{\xi}_{t'} )$. Furthermore, we capture the information revealed during this time interval by the $\sigma$-algebra $\mathcal{F}_{[t,t']} = \sigma(\bm{\xi}_{[t,t']})$, \emph{i.e.}, the $\sigma$-algebra generated by $\bm{\xi}_{[t,t']}$. For ease of notation, we abbreviate $\bm{\xi}_{[0, t']}$ and $\mathcal{F}_{[0, t']}$ by $\bm{\xi}_{[t']}$ and $\mathcal{F}_{[t']}$, respectively.

The hydropower generation company must decide each day how much energy to offer on the different markets in every hour of the following day. This in turn necessitates a strategy for operating the reservoir system that is guaranteed to honor all market commitments. 
We denote the bids placed on the spot market, the reserve-up market and the reserve-down market by $s_t, u_t, v_t \in \mathcal{L}(\mathcal{F}_{[\Downarrow(t)]})$~$[\text{MWh}]$, respectively. \revision{Our model involves} the water flows along the arcs as well as the reservoir filling levels as real-time operational decisions. Specifically, we denote the generation flows, \emph{i.e.}, the amounts of water running through the turbines in each hour, by $\bm{g}_t \in \mathcal{L}^A(\mathcal{F}_{[t]})$~$[\text{m}^3]$ and the consumption flows, \emph{i.e.}, the amounts of water running through the pumps in each hour, by $\bm{p}_t \in \mathcal{L}^A(\mathcal{F}_{[t]})$~$[\text{m}^3]$. \revision{Furthermore, we ignore concerns of flooding related to the dummy reservoir $R$ and assume that one can spill unlimited amounts of water $\bm{z}_t \in \mathcal{L}^A(\mathcal{F}_{[t]})$~$[\text{m}^3]$ along all arcs from the upstream reservoirs to the downstream reservoirs/rivers.} The reservoir filling levels at the end of each hour are denoted by $\bm{w}_t \in \mathcal{L}^R(\mathcal{F}_{[t]})$~$[\text{m}^3]$. The initial filling levels $\bm{w}_0 \in \mathbb{R}^R$~$[\text{m}^3]$ are deterministic input parameters of our model. 
We assume that the hydropower generation company is risk-neutral but aims to satisfy all constraints robustly---especially those related to reserve market commitments. This appears justified as non-compliance would incur high penalty fees or even lead to an exclusion from the reserve markets. Thus, the company maximizes the expected cumulative revenues across the entire planning horizon while maintaining its ability to respond to all possible call-offs on the reserve markets. The objective function thus takes the form
\begin{equation*}
\sum_{t \in \mathcal{T}} \mathbb{E}\big[ \pi_t^{\rm{s}} s_t + \revision{\kappa^{\rm u}_t}(\pi_t^{\rm{u}} + \rho_t^{\rm{u}} \psi_t^{\rm{u}}) u_t + \revision{\kappa^{\rm v}_t}(\pi_t^{\rm{v}} + \rho_t^{\rm{v}} \psi_t^{\rm{v}}) v_t \big],
\end{equation*}
and the non-anticipativity constraints to be respected are
\begin{equation*}
s_t, u_t, v_t \in \mathcal{L}(\mathcal{F}_{[\Downarrow(t)]}), ~\bm{g}_t, \bm{p}_t, \bm{z}_t \in \mathcal{L}^A(\mathcal{F}_{[t]}), ~\bm{w}_t \in \mathcal{L}^R(\mathcal{F}_{[t]}) \qquad \forall t \in \mathcal{T}.
\end{equation*}
While spot market bids have no sign restrictions (\emph{i.e.}, the company can both buy or sell electricity in the spot market), bids in the reserve markets as well as the operational flow decisions must be non-negative. Also, the flow decisions must obey the capacity limits of the respective generators and pumps, and the reservoir levels must stay within the respective bounds. Thus, we require
\begin{equation*}
{0} \leq {u}_t, ~{0} \leq {v}_t, ~\bm{0} \leq \bm{g}_t \leq \overline{\bm{g}}_t, ~\bm{0} \leq \bm{p}_t \leq \overline{\bm{p}}_t, ~\bm{0} \leq \bm{z}_t,~\underline{\bm{w}}_t \leq \bm{w}_t \leq \overline{\bm{w}}_t \qquad \forall t \in \mathcal{T},~\text{$\mathbb{P}$-a.s.}
\end{equation*}
In addition, the hydropower company is obliged to produce and/or consume energy according to its market commitments. If the same system operator is responsible for both the spot and the reserve markets, then the market commitments are enforced through the following constraint
\begin{equation*}
{s}_t + \revision{\kappa^{\rm u}_t}\rho_t^{\rm{u}} {u}_t - \revision{\kappa^{\rm v}_t}\rho_t^{\rm{v}} {v}_t = \bm{\eta}_t^\top \bm{g}_t - \bm{\zeta}_t^\top \bm{p}_t \qquad \forall t \in \mathcal{T},~\text{$\mathbb{P}$-a.s.}
\end{equation*}
Taking into account the natural inflows, the reservoir filling levels obey the dynamic equation
\begin{equation*}
\bm{w}_t = \bm{w}_{t-1} + \bm{\phi}_t + \mathbf{M}(\bm{g}_t - \bm{p}_t + \bm{z}_t) \qquad \forall t \in \mathcal{T},~\text{$\mathbb{P}$-a.s.}
\end{equation*}
\revision{In summary, the company aims to solve the stochastic optimization problem}
\begin{equation*}
\label{opt:collective_bidding_model}
\tag{C}
\begin{aligned}
    &\text{sup} && \sum_{t \in \mathcal{T}} \mathbb{E}\big[ \pi_t^{\rm{s}} s_t + \revision{\kappa^{\rm u}_t}(\pi_t^{\rm{u}} + \rho_t^{\rm{u}} \psi_t^{\rm{u}}) u_t + \revision{\kappa^{\rm v}_t}(\pi_t^{\rm{v}} + \rho_t^{\rm{v}} \psi_t^{\rm{v}}) v_t \big]\\
    &\text{s.t.} && s_t, u_t, v_t \in \mathcal{L}(\mathcal{F}_{[\Downarrow(t)]}), ~\bm{g}_t, \bm{p}_t, \bm{z}_t \in \mathcal{L}^A(\mathcal{F}_{[t]}), ~\bm{w}_t \in \mathcal{L}^R(\mathcal{F}_{[t]}) && \forall t \in \mathcal{T}\\
    &	  && 0 \leq u_t, ~0 \leq v_t, ~\bm{0} \leq \bm{g}_t \leq \overline{\bm{g}}_t, ~\bm{0} \leq \bm{p}_t \leq \overline{\bm{p}}_t, ~\bm{0} \leq \bm{z}_t && \forall t \in \mathcal{T},~\text{$\mathbb{P}$-a.s.}\\
    &	  && s_t + \revision{\kappa^{\rm u}_t}\rho_t^{\rm{u}} u_t - \revision{\kappa^{\rm v}_t}\rho_t^{\rm{v}} v_t = \bm{\eta}_t^\top \bm{g}_t - \bm{\zeta}_t^\top \bm{p}_t && \forall t \in \mathcal{T},~\text{$\mathbb{P}$-a.s.}\\
    &	  && \bm{w}_t = \bm{w}_{t-1} + \bm{\phi}_t + \mathbf{M}(\bm{g}_t - \bm{p}_t + \bm{z}_t) && \forall t \in \mathcal{T},~\text{$\mathbb{P}$-a.s.}\\
    &	  && \underline{\bm{w}}_t \leq \bm{w}_t \leq \overline{\bm{w}}_t && \forall t \in \mathcal{T},~\text{$\mathbb{P}$-a.s.}
\end{aligned}
\end{equation*}

\section{Planner-Trader Decomposition}
\label{sec:bi-layer}
%
%


In this section we will demonstrate that problem~\eqref{opt:collective_bidding_model} can be decomposed into subproblems that lend themselves to further simplification. 
\revision{Specifically, we first express~\eqref{opt:collective_bidding_model} as a bi-layer stochastic program}, where the outer layer (the {\em planner's problem}) optimizes over the reservoir filling levels with a daily granularity, \revision{whereas} the inner layer (the {\em trader's problem}) optimizes over the bidding decisions~with an hourly granularity. To this end, recall that the decision variables $\{ \bm{w}_t \}_{t \in \mathcal{T}}$ in problem~\eqref{opt:collective_bidding_model} represent the end-of-hour filling levels of the reservoirs. In order to reformulate~\eqref{opt:collective_bidding_model}, we will overload notation and denote by $\bm{w}_d = \bm{w}_{\uparrow(d)}$, $\underline{\bm{w}}_d = \bm{\underline{w}}_{\uparrow(d)}$ and $\bm{\overline{w}}_d = \overline{\bm{w}}_{\uparrow(d)}$ the end-of-day reservoir filling levels and their lower and upper bounds for day $d \in \mathcal{D}$, respectively. Next, we use the reservoir balance constraints to re-express all end-of-hour filling levels~$\{ \bm{w}_t \}_{t \in \mathcal{T}}$ in terms of the end-of-day filling levels~$\{ \bm{w}_d \}_{d \in \mathcal{D}}$, the hourly flow decisions~$\{\bm{g}_t, \bm{p}_t, \bm{z}_t\}_{t\in\mathcal{T}}$ and the natural inflows~$\{\bm{\phi}_t\}_{t\in\mathcal{T}}$: 
\begin{equation*}
	\bm{w}_t = \bm{w}_{d-1} + \sum_{\tau=\downarrow(d)}^t \bm{\phi}_\tau + \mathbf{M}(\bm{g}_\tau - \bm{p}_\tau + \bm{z}_\tau)
	\qquad \forall d \in \mathcal{D}, \; \forall t \in \mathcal{T}(d).
\end{equation*}
Using this relation, the bidding model~\eqref{opt:collective_bidding_model} can be recast equivalently as
\begin{equation}
\label{opt:collective_bidding_model_dailywater}
\begin{aligned}
    &\text{sup} && \displaystyle \sum_{t \in \mathcal{T}} \mathbb{E}\left[ \pi_t^{\rm{s}} s_t + \revision{\kappa^{\rm u}_t}(\pi_t^{\rm{u}} + \rho_t^{\rm{u}} \psi_t^{\rm{u}}) u_t + \revision{\kappa^{\rm v}_t}(\pi_t^{\rm{v}} + \rho_t^{\rm{v}} \psi_t^{\rm{v}}) v_t \right]\\
    &\text{s.t.} && s_t, u_t, v_t \in \mathcal{L}(\mathcal{F}_{[\Downarrow(t)]}), ~\bm{g}_t, \bm{p}_t, \bm{z}_t \in \mathcal{L}^A(\mathcal{F}_{[t]}), ~\bm{w}_d \in \mathcal{L}^R(\mathcal{F}_{[\uparrow(d)]}) && \forall d \in \mathcal{D}, ~\forall t \in \mathcal{T}(d)\\
    &	  && 0 \leq u_t, ~0 \leq v_t, ~\bm{0} \leq \bm{g}_t \leq \overline{\bm{g}}_t, ~\bm{0} \leq \bm{p}_t \leq \overline{\bm{p}}_t, ~\bm{0} \leq \bm{z}_t && \hspace{0mm} \forall d \in \mathcal{D}, ~\forall t \in \mathcal{T}(d),~\text{$\mathbb{P}$-a.s.}\\
    &	  && s_t + \revision{\kappa^{\rm u}_t}\rho_t^{\rm{u}} u_t- \revision{\kappa^{\rm v}_t}\rho_t^{\rm{v}} v_t = \bm{\eta}_t^\top \bm{g}_t - \bm{\zeta}_t^\top \bm{p}_t && \hspace{0mm} \forall d \in \mathcal{D}, ~\forall t \in \mathcal{T}(d),~\text{$\mathbb{P}$-a.s.}\\
    &	  && \underline{\bm{w}}_t  \leq \bm{w}_{d-1} + \sum_{\tau=\downarrow(d)}^t \bm{\phi}_\tau + \mathbf{M}(\bm{g}_\tau - \bm{p}_\tau + \bm{z}_\tau) \leq \overline{\bm{w}}_t && \hspace{0mm} \forall d \in \mathcal{D}, ~\forall t \in \mathcal{T}(d),~\text{$\mathbb{P}$-a.s.}\\
    &	  && \bm{w}_d = \bm{w}_{d-1} + \sum_{\tau \in \mathcal{T}(d)} \bm{\phi}_\tau + \mathbf{M}(\bm{g}_\tau - \bm{p}_\tau + \bm{z}_\tau) && \hspace{0mm} \forall d \in \mathcal{D},~\text{$\mathbb{P}$-a.s.}\\
    &	  && \underline{\bm{w}}_d \leq \bm{w}_d \leq \overline{\bm{w}}_d && \forall d \in \mathcal{D},~\hspace{0mm} \text{$\mathbb{P}$-a.s.},
\end{aligned}
\end{equation}
Note that the end-of-day reservoir bounds in the last line are implied by the end-of-hour reservoir bounds two lines above and are thus redundant. However, they will be instrumental for proving the tightness of a relaxation of problem~\eqref{opt:collective_bidding_model_dailywater} to be derived below. Next, we introduce another auxiliary problem that imposes the daily reservoir balance constraints as inequalities rather than equalities.
\begin{equation}
\label{opt:collective_bidding_model_dailywater2}
\begin{aligned}
    &\text{sup} && \sum_{t \in \mathcal{T}} \mathbb{E}\left[ \pi_t^{\rm{s}} s_t + \revision{\kappa^{\rm u}_t}(\pi_t^{\rm{u}} + \rho_t^{\rm{u}} \psi_t^{\rm{u}}) u_t + \revision{\kappa^{\rm v}_t}(\pi_t^{\rm{v}} + \rho_t^{\rm{v}} \psi_t^{\rm{v}}) v_t \right]\\
    &\text{s.t.} && s_t, u_t, v_t \in \mathcal{L}(\mathcal{F}_{[\Downarrow(t)]}), ~\bm{g}_t, \bm{p}_t, \bm{z}_t \in \mathcal{L}^A(\mathcal{F}_{[t]}), ~\bm{w}_d \in \mathcal{L}^R(\mathcal{F}_{[\uparrow(d)]}) && \forall d \in \mathcal{D}, ~\forall t \in \mathcal{T}(d)\\
    &	  && 0 \leq u_t, ~0 \leq v_t, ~\bm{0} \leq \bm{g}_t \leq \overline{\bm{g}}_t, ~\bm{0} \leq \bm{p}_t \leq \overline{\bm{p}}_t, ~\bm{0} \leq \bm{z}_t && \hspace{0mm} \forall d \in \mathcal{D}, ~\forall t \in \mathcal{T}(d),~\text{$\mathbb{P}$-a.s.}\\
    &	  && s_t + \revision{\kappa^{\rm u}_t}\rho_t^{\rm{u}} u_t - \revision{\kappa^{\rm v}_t}\rho_t^{\rm{v}} v_t = \bm{\eta}_t^\top \bm{g}_t - \bm{\zeta}_t^\top \bm{p}_t && \hspace{0mm} \forall d \in \mathcal{D}, ~\forall t \in \mathcal{T}(d),~\text{$\mathbb{P}$-a.s.}\\
    &	  && \underline{\bm{w}}_t  \leq \bm{w}_{d-1} + \sum_{\tau=\downarrow(d)}^t \bm{\phi}_\tau + \mathbf{M}(\bm{g}_\tau - \bm{p}_\tau + \bm{z}_\tau) \leq \overline{\bm{w}}_t && \hspace{0mm} \forall d \in \mathcal{D}, ~\forall t \in \mathcal{T}(d),~\text{$\mathbb{P}$-a.s.}\\
    &	  && \bm{w}_d \leq \bm{w}_{d-1} + \sum_{\tau \in \mathcal{T}(d)} \bm{\phi}_\tau + \mathbf{M}(\bm{g}_\tau - \bm{p}_\tau + \bm{z}_\tau) && \hspace{0mm} \forall d \in \mathcal{D},~\text{$\mathbb{P}$-a.s.}\\
    &	  && \underline{\bm{w}}_d \leq \bm{w}_d \leq \overline{\bm{w}}_d && \forall d \in \mathcal{D},~\hspace{0mm} \text{$\mathbb{P}$-a.s.}\\
\end{aligned}
\end{equation}
The next proposition asserts that the optimization problems \eqref{opt:collective_bidding_model_dailywater} and \eqref{opt:collective_bidding_model_dailywater2} are indeed equivalent. \revision{This proposition relies on the observation that any slacks between the left- and right-hand sides of the daily reservoir balance constraints can systematically be eliminated by suitable spillages, which are unrestricted in magnitude by our earlier assumptions.} 


\begin{proposition}
\label{prop:collective_inequality_equivalence}
The optimal values of problems \eqref{opt:collective_bidding_model_dailywater} and \eqref{opt:collective_bidding_model_dailywater2} are equal.
\end{proposition}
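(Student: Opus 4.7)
The plan is to recycle the proof of Proposition~\ref{prop:individual_inequality_equivalence} almost verbatim, exploiting the fact that the spill-correction construction used there only touches the spill variables $\bm{z}_t$ and the filling level of the dummy reservoir $R$, neither of which enters the market-commitment constraint. Since the collective model differs from the individual one only by replacing the vector-valued bids $\bm{s}_t,\bm{u}_t,\bm{v}_t$ with scalars $s_t,u_t,v_t$ and the componentwise market-commitment constraint with its inner-product version $s_t+\rho_t^{\mathrm{u}} u_t-\rho_t^{\mathrm{v}} v_t=\bm{\eta}_t^\top\bm{g}_t-\bm{\zeta}_t^\top\bm{p}_t$, and since $\bm{z}_t$ is absent from both the market-commitment constraint and the objective in either setting, the same construction will carry through.

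Concretely, the $(\le)$ direction is immediate because \eqref{opt:collective_bidding_model_dailywater2} relaxes \eqref{opt:collective_bidding_model_dailywater}. For the reverse direction, I would pick an arbitrary feasible point $\{(s_t,u_t,v_t,\bm{g}_t,\bm{p}_t,\bm{z}_t,\bm{w}_d)\}$ of~\eqref{opt:collective_bidding_model_dailywater2} and, for each $d\in\mathcal{D}$ and each $r\in\mathcal{R}\setminus\{R\}$, define the slack
\begin{equation*}
h_{d,r}=w_{d,r}-w_{d-1,r}-\sum_{\tau\in\mathcal{T}(d)}\Big[\phi_{\tau,r}+\sum_{a\in\mathcal{A}} m_{r,a}(g_{\tau,a}-p_{\tau,a}+z_{\tau,a})\Big].
\end{equation*}
Each $h_{d,r}$ is $\mathcal{F}_{[\uparrow(d)]}$-measurable and, by the daily inequality, non-positive $\mathbb{P}$-a.s. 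Lemma~\ref{obs:M} then supplies an adapted $\hat{\bm{z}}_d\ge \bm{0}$ with $\mathbf{M}\hat{\bm{z}}_d=[\bm{h}_d^\top,-\bm{1}^\top\bm{h}_d]^\top$ almost surely. Exactly as in the proof of Proposition~\ref{prop:individual_inequality_equivalence}, I would define $\bm{z}'_t=\bm{z}_t+\hat{\bm{z}}_{d(t)}$ for $t\in\bigcup_d\uparrow(d)$ and $\bm{z}'_t=\bm{z}_t$ otherwise, set $w'_{d,r}=w_{d,r}$ for $r\ne R$, and use the same explicit expression for $w'_{d,R}$ that accumulates the compensating slack at the dummy sink.

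Four routine checks close the argument. First, the scalar market-commitment equations are preserved because $s_t,u_t,v_t,\bm{g}_t,\bm{p}_t$ are unchanged and spillage does not appear in them. Second, the hourly reservoir bounds at interior hours $t<\,\uparrow(d)$ are unchanged because the extra spill $\hat{\bm{z}}_d$ is injected only at $t=\,\uparrow(d)$; at $t=\,\uparrow(d)$, the same algebraic identity used in the individual case shows that the running sum equals $\bm{w}'_d$, whose non-dummy components satisfy the hourly bounds thanks to the end-of-day bounds $\underline{\bm{w}}_d\le\bm{w}_d\le\overline{\bm{w}}_d$ that are deliberately retained in~\eqref{opt:collective_bidding_model_dailywater2}, and whose dummy component trivially satisfies $\underline{w}_{t,R}=0\le w'_{\uparrow(d),R}\le +\infty=\overline{w}_{t,R}$. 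Third, the daily balance now holds as an equality, by exactly the two display computations (for $r\ne R$ and for $r=R$, the latter using $m_{R,a}p_{\tau,a}=0$) of the individual proof. Fourth, the objective value is preserved because it depends only on the unchanged scalar bids $s_t,u_t,v_t$.

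The main potential obstacle I see is purely notational, namely keeping the two conventions $\bm{w}_d=\bm{w}_{\uparrow(d)}$ and $\underline{\bm{w}}_d=\underline{\bm{w}}_{\uparrow(d)}$ (and likewise for upper bounds) straight so that the hourly bound at $t=\,\uparrow(d)$ really reduces to the end-of-day bound used in the construction. There is no genuinely new difficulty: if anything, the collective case is slightly cleaner than the individual one because one only has a single scalar market-commitment equation per hour, and the spill correction remains entirely decoupled from it.
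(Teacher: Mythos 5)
Your proposal is correct and is essentially the paper's own argument: the paper's proof of Proposition~\ref{prop:collective_inequality_equivalence} simply notes that \eqref{opt:collective_bidding_model_dailywater2} relaxes \eqref{opt:collective_bidding_model_dailywater} and invokes ``a very similar construction as in the proof of Proposition~\ref{prop:individual_inequality_equivalence},'' omitting the details. You have merely spelled out that adaptation (slack variables, Lemma~\ref{obs:M}, spill correction at the last hour of each day, dummy-reservoir bookkeeping, and the role of the retained end-of-day bounds), all of which checks out.
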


We are now ready to decompose the bidding problem~\eqref{opt:collective_bidding_model}, which jointly maximizes over all hourly and daily decisions, into planning and trading subproblems that maximize only over the daily and the hourly decisions, respectively. The proposed decomposition assumes that a fictitious planner and a fictitious trader collaborate to solve problem~\eqref{opt:collective_bidding_model} in the following manner. The planner determines the end-of-day reservoir levels by solving the stochastic program 
\begin{equation*}
\label{opt:collective_bidding_model_dailywater_sep}
\tag{CP}
\begin{array}{c@{~~~}l@{\,}l}
    \text{sup} & \displaystyle \sum_{d \in \mathcal{D}}  \mathbb{E} \big[ \Pi_d( \bm w_{d-1}, \bm w_{d}, & \bm\xi_{[\Downarrow(d)]}) \big] \\[1ex]
    \text{s.t.} & \bm{w}_d \in \mathcal{L}^R(\mathcal{F}_{[\uparrow(d)]}) & \forall d \in \mathcal{D} \\
    & \underline{\bm{w}}_d \leq \bm{w}_d \leq \overline{\bm{w}}_d & \forall d \in \mathcal{D},~ \text{$\mathbb{P}$-a.s.},
    \end{array}
\end{equation*}
which maximizes the expected daily profits accrued over the entire planning horizon. Here, the function $\Pi_d( \bm w_{d-1}, \bm w_{d}, \bm\xi_{[\Downarrow(d)]})$ represents the expected profit earned by the trader on day~$d$, conditional on the information $ \bm\xi_{[\Downarrow(d)]}$ available at the beginning of the day and conditional on the initial and terminal reservoir levels $\bm w_{d-1}$ and $\bm w_{d}$, respectively, imposed by the planner. The function $\Pi_d( \bm w_{d-1}, \bm w_{d}, \bm\xi_{[\Downarrow(d)]})$ evaluates the optimal value of the stochastic program
\begin{equation*}
\tag{CT}
\label{opt:collective_trading}
\begin{array}{c@{~~~}ll} \text{sup} & \displaystyle \sum_{t \in \mathcal{T}(d)}\mathbb{E}\big[ \pi_t^{\rm{s}} s_t + \revision{\kappa^{\rm u}_t}(\pi_t^{\rm{u}} + \rho_t^{\rm{u}} \psi_t^{\rm{u}}) u_t + \revision{\kappa^{\rm v}_t}(\pi_t^{\rm{v}} + \rho_t^{\rm{v}} \psi_t^{\rm{v}}) v_t \big\vert \bm\xi_{[\Downarrow(d)]}\big] \\[3mm]
    \text{s.t.} & s_t, u_t, v_t \in \mathbb{R}, ~\bm{g}_t, \bm{p}_t, \bm{z}_t \in \mathcal{L}^A(\mathcal{F}_{[\downarrow(d),t]} ) & \hspace{-3mm} \forall t \in \mathcal{T}(d) \\[3mm]
    & 0 \leq u_t, ~0 \leq v_t, ~\bm{0} \leq \bm{g}_t \leq \overline{\bm{g}}_t, ~\bm{0} \leq \bm{p}_t \leq \overline{\bm{p}}_t, ~\bm{0} \leq \bm{z}_t & \hspace{-3mm} \forall t \in \mathcal{T}(d),~\text{$\mathbb{P}_{\vert \bm\xi_{[\Downarrow(d)]}}$-a.s.}\\[3mm]
    & s_t + \revision{\kappa^{\rm u}_t}\rho_t^{\rm{u}} u_t - \revision{\kappa^{\rm v}_t}\rho_t^{\rm{v}} v_t = \bm{\eta}_t^\top \bm{g}_t - \bm{\zeta}_t^\top \bm{p}_t & \hspace{-3mm} \forall t \in \mathcal{T}(d),~\text{$\mathbb{P}_{\vert \bm\xi_{[\Downarrow(d)]}}$-a.s.}\\
    & \displaystyle \underline{\bm{w}}_t \leq \bm{w}_{d-1} + \sum_{\tau=\downarrow(d)}^t \bm{\phi}_\tau + \mathbf{M}(\bm{g}_\tau - \bm{p}_\tau + \bm{z}_\tau) \leq \overline{\bm{w}}_t & \hspace{-3mm} \forall t \in \mathcal{T}(d),~\text{$\mathbb{P}_{\vert \bm\xi_{[\Downarrow(d)]}}$-a.s.}\\[3mm]
    & \displaystyle \bm{w}_d \leq \bm{w}_{d-1} + \sum_{\tau \in \mathcal{T}(d)} \bm{\phi}_\tau + \mathbf{M}(\bm{g}_\tau - \bm{p}_\tau + \bm{z}_\tau) & \hspace{-3mm} \text{$\mathbb{P}_{\vert \bm\xi_{[\Downarrow(d)]}}$-a.s.}\\[3mm]
    \end{array}
\end{equation*}
solved by the trader, where $\mathbb{P}_{\vert \bm\xi_{[\Downarrow(d)]}}$ denotes the probability measure~$\mathbb P$ conditioned on~$\bm\xi_{[\Downarrow(d)]}$, and $\mathbb{E} [ \cdot \vert \bm\xi_{[\Downarrow(d)]}]$ denotes the expectation with respect to~$\mathbb{P}_{\vert \bm\xi_{[\Downarrow(d)]}}$. Note that the trader's problem~\eqref{opt:collective_trading} may be infeasible for some reservoir targets $\bm w_{d-1}$ and $\bm w_{d}$, in which case $\Pi_d( \bm w_{d-1}, \bm w_{d}, \bm\xi_{[\Downarrow(d)]})$ evaluates to $-\infty$ and thus introduces implicit constraints in the planner's problem~\eqref{opt:collective_bidding_model_dailywater_sep}. 
We also highlight that problem~\eqref{opt:collective_trading} constitutes a stochastic program with contextual information \citep{BanRudin19, ref:kallus-20}, where the contextual covariates $\bm\xi_{[\Downarrow(d)]}$ impact the (conditional) distribution of the random variables in the objective and the constraints.

The next proposition asserts that the planner-trader decomposition incurs no loss of optimality.

\begin{proposition}
\label{prop:collective_bilevel_separation}
The optimal values of problems \eqref{opt:collective_bidding_model_dailywater2} and \eqref{opt:collective_bidding_model_dailywater_sep} are equal.
\end{proposition}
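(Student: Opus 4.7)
The plan is to mirror the proof of Proposition~\ref{prop:individual_bilevel_separation} almost verbatim, since the only structural differences between the collective bidding model~\eqref{opt:collective_bidding_model_dailywater2} and the individual bidding model~\eqref{opt:individual_bidding_model_dailywater2} are that (i) the bids $s_t, u_t, v_t$ are scalars instead of $A$-dimensional vectors, and (ii) the market-clearing equality is expressed via inner products $\bm{\eta}_t^\top \bm{g}_t - \bm{\zeta}_t^\top \bm{p}_t$ rather than via the Hadamard products $\bm{\eta}_t \circ \bm{g}_t - \bm{\zeta}_t \circ \bm{p}_t$. Neither of these changes affects the decoupling structure that makes the decomposition work.

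First, I would fix the end-of-day reservoir filling levels $\{\bm{w}_d\}_{d\in\mathcal{D}}$ and observe that in problem~\eqref{opt:collective_bidding_model_dailywater2} the bidding and operational decisions $\{(s_t, u_t, v_t, \bm{g}_t, \bm{p}_t, \bm{z}_t)\}_{t\in\mathcal{T}(d)}$ are then coupled to the rest of the problem only through $\bm{w}_{d-1}$ and $\bm{w}_d$. Consequently, \eqref{opt:collective_bidding_model_dailywater2} can be rewritten as an outer stochastic program over $\{\bm{w}_d\}_{d\in\mathcal{D}}$ subject to the same non-anticipativity and reservoir-bound constraints as in~\eqref{opt:collective_bidding_model_dailywater_sep}, whose objective is $\sum_{d\in\mathcal{D}} \hat{\Pi}^{\rm C}_d(\bm{w}_{d-1}, \bm{w}_d)$, with $\hat{\Pi}^{\rm C}_d(\bm{w}_{d-1}, \bm{w}_d)$ being the optimal value of a day-$d$ stochastic subproblem analogous to~\eqref{opt:trader-preliminary}, but with scalar bids and inner-product market-clearing constraints, where the bidding variables are measurable with respect to $\mathcal{F}_{[\Downarrow(d)]}$ and the operational decisions are measurable with respect to $\mathcal{F}_{[t]}$.

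Second, I would introduce the same shorthand as in the proof of Proposition~\ref{prop:individual_bilevel_separation}: set $\bm{\xi}_0^d = \bm{\xi}_{[\Downarrow(d)]}$, $\bm{\xi}_{[h]}^d = \bm{\xi}_{[\Downarrow(d)+h]}$ for $h = 1,\ldots,H$, collect the hourly bidding decisions of day $d$ in $\bm{x}_0^d = (\{s_\tau, u_\tau, v_\tau\}_{\tau = \downarrow(d)}^{\uparrow(d)}) \in \mathcal{L}^{3H}(\mathcal{F}_{[\Downarrow(d)]})$, and collect the hourly operational decisions in $\bm{x}_h^d = (\bm{g}_{\Downarrow(d)+h}, \bm{p}_{\Downarrow(d)+h}, \bm{z}_{\Downarrow(d)+h}) \in \mathcal{L}^{3A}(\mathcal{F}_{[\Downarrow(d)+h]})$ for $h = 1,\ldots,H$. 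I would then define an extended-real-valued objective function $f^d(\bm{x}_0^d, \ldots, \bm{x}_H^d, \bm{\xi}_{[H]}^d)$ that equals the daily profit when all $\mathbb{P}$-almost sure constraints of the day-$d$ subproblem are satisfied and $-\infty$ otherwise, so that the subproblem admits the abstract form to which Lemma~\ref{lem:maxexp2expmax} applies.

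Third, applying that extended interchangeability principle yields
\begin{equation*}
\hat{\Pi}^{\rm C}_d(\bm{w}_{d-1}, \bm{w}_d) = \mathbb{E}\!\left[\;
\begin{array}{cl} \text{sup} & \mathbb{E}\bigl[f^d(\bm{x}_0^d,\ldots,\bm{x}_H^d,\bm{\xi}_{[H]}^d) \,\bigl|\, \bm{\xi}_0^d\bigr] \\[1ex]
\text{s.t.} & \bm{x}_0^d \in \mathbb{R}^{3H}, \; \bm{x}_h^d \in \mathcal{L}^{3A}(\mathcal{F}_{[\downarrow(d),\Downarrow(d)+h]}) \;\; \forall h = 1,\ldots,H
\end{array}\;\right],
\end{equation*}
and unravelling the abbreviations shows that the stochastic program inside the outer expectation is precisely the trader's problem~\eqref{opt:collective_trading}, whose optimal value is $\Pi^{\rm C}_d(\bm{w}_{d-1}, \bm{w}_d, \bm{\xi}_{[\Downarrow(d)]})$. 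Substituting this identity back into the outer stochastic program completes the identification with~\eqref{opt:collective_bidding_model_dailywater_sep}. No single step poses a substantial obstacle: the only point that requires minimal care is checking that the scalar bid variables and inner-product constraints of the collective model do not violate the integrability hypotheses of Lemma~\ref{lem:maxexp2expmax}, which is immediate because the relevant random variables have finite expectation by assumption.
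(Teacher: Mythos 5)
Your proposal is correct and follows essentially the same route as the paper, which proves this proposition by noting it parallels the proof of Proposition~\ref{prop:individual_bilevel_separation} (fixing the end-of-day levels to decouple the days, casting each daily subproblem in the abstract form required by Lemma~\ref{lem:maxexp2expmax}, and interchanging the supremum with the expectation to recover~\eqref{opt:collective_trading}). The adjustments you note for scalar bids and inner-product energy-delivery constraints are exactly the immaterial differences the paper has in mind when it omits the details.
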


In conclusion, we emphasize that the proposed planner-trader decomposition results in an exact reformulation of the underlying bidding model~\eqref{opt:collective_bidding_model} and that they involve no approximations. This decomposition is useful because the trader's subproblem can be reduced to a tractable~linear~program under a mild information restriction. 

\section{Reduction of the \revision{Trader's Problem}}
\label{sec:reduction_collective}

The trader's problem~\eqref{opt:collective_trading} constitutes an infinite-dimensional stochastic program because it optimizes over functional flow decisions. Such problems are known to be intractable in general \citep{dyer:06, Grani15}. In this section we will show, however, that the stochastic program \eqref{opt:collective_trading} is equivalent to a finite-dimensional linear program if the planner chooses the reservoir filling levels one day ahead of time. Note that selecting $\bm{w}_d$ at the end of day $d-1$ rather than the end of day $d$ amounts to an information restriction whereby the planner sacrifices potentially useful information revealed during day $d$ and thus foregoes some of the achievable expected profit. In other words, requiring that the reservoir filling levels be pre-committed a day in advance leads to a conservative approximation of the planning problem \eqref{opt:collective_bidding_model_dailywater_sep}. For later reference, we formally state this approximation below. 

\begin{approximation}[Information restriction]
\label{apx:day_ahead_water_level}
\revision{For every $d\in\mathcal D$, the end-of-day reservoir filling level $\bm{w}_d$}
is restricted to $\mathcal{L}^R(\mathcal{F}_{[\uparrow(d-1)]}) = \mathcal{L}^R(\mathcal{F}_{[\Downarrow(d)]})\subseteq \mathcal{L}^R(\mathcal{F}_{[\uparrow(d)]})$.
\end{approximation}

Under Approximation~\ref{apx:day_ahead_water_level}, $\bm{w}_d$ becomes a measurable function of~$\bm\xi_{[\Downarrow(d)]}$, which captures the information available to the trader at the beginning of day~$d$. 
Similarly, the natural inflows $\{ \bm{\phi}_t \}_{t \in \mathcal{T}(d)}$ of day $d$ were assumed to be predictable at the beginning of the day and can therefore also be expressed as measurable functions of~$\bm\xi_{[\Downarrow(d)]}$. Finally, the coefficients
\[
	 \tilde \pi_t^{\rm{s}} = \mathbb{E}\big[ \pi_t^{\rm{s}} \,\big\vert\, \bm\xi_{[\Downarrow(d)]}\big], \quad 
	 \tilde \pi_t^{\rm{u}} = \mathbb{E}\big[ \revision{\kappa^{\rm u}_t}(\pi_t^{\rm{u}} + \rho_t^{\rm{u}} \psi_t^{\rm{u}}) \,\big\vert\, \bm\xi_{[\Downarrow(d)]}\big] \quad \text{and} \quad
	 \tilde \pi_t^{\rm{v}} = \mathbb{E}\big[ \revision{\kappa^{\rm v}_t}(\pi_t^{\rm{v}} + \rho_t^{\rm{v}} \psi_t^{\rm{v}}) \,\big\vert\, \bm\xi_{[\Downarrow(d)]}\big] \quad \forall t \in \mathcal{T}(d)
\]
of the (here-and-now) bidding decisions $\{ (s_t, u_t, v_t) \}_{t \in \mathcal{T}(d)}$ in the objective function of problem~\eqref{opt:collective_trading} constitute measurable functions of~$\bm\xi_{[\Downarrow(d)]}$ thanks to the properties of conditional expectations. We may thus conclude that, once a particular realization of~$\bm\xi_{[\Downarrow(d)]}$ is fixed, the individual trader's problem~\eqref{opt:collective_trading} reduces to a multistage stochastic program with $H+1$ decision stages. \revision{Here, the reserve allocations $\{ (\kappa_t^{\rm{u}},\kappa_t^{\rm{v}}) \}_{t \in \mathcal{T}(d)}$, which are revealed simultaneously at the beginning of hour~1, and the reserve activations $\{ (\rho_t^{\rm{u}},\rho_t^{\rm{v}}) \}_{t \in \mathcal{T}(d)}$, which are revealed sequentially at the beginning of the respective hours,} are the only exogenous uncertain parameters affecting the constraints, and the objective function is independent of the (wait-and-see) flow decisions $\{ (\bm{g}_t, \bm{p}_t, \bm{z}_t) \}_{t \in \mathcal{T}(d)}$. 

\revision{Approximation~\ref{apx:day_ahead_water_level} is crucial for our approach to solving the bidding problem~\eqref{opt:collective_bidding_model}, yet it 
is likely to sacrifice optimality. Indeed, selecting the water levels $\bm{w}_d$ already at the end of day $d - 1$ implies that $\bm{w}_d$ cannot adapt to the sequence of reserve allocations $\{ (\kappa_t^\text{u}, \kappa_t^\text{v}) \}_{t \in \mathcal{T} (d)}$  and reserve activations $\{ (\rho_t^\text{u}, \rho_t^\text{v}) \}_{t \in \mathcal{T} (d)}$ encountered during day~$d$. Thus, the water levels $\bm{w}_d$ need to be sufficiently low so as to accommodate for any possible allocation and activation sequence throughout day $d$, and any water saved due to the absence of reserve-up or the presence of reserve-down activations will not be accounted for on subsequent days. While this may discourage bidding on the reserve markets, our case study in Section~\ref{sec:case_study} will show that significant fractions of the overall revenues are nevertheless generated from participation on the reserve markets.}

We will now show that the trader's problem~\eqref{opt:collective_trading} collapses to a tractable linear program if the reservoir targets imposed by the planner are chosen a day in advance. We thus subject the planner's problem~\eqref{opt:collective_bidding_model_dailywater_sep} to the information restriction of Approximation~\ref{apx:day_ahead_water_level}, which reduces the achievable expected revenues but makes the feasible set of the trader's problem~\eqref{opt:collective_trading} independent of all exogenous uncertainties except for the \revision{reserve allocations $\{ (\kappa_t^{\rm{u}}, \kappa_t^{\rm{v}}) \}_{t \in \mathcal{T}(d)}$ and the reserve activations $\{ (\rho_t^{\rm{u}}, \rho_t^{\rm{v}}) \}_{t \in \mathcal{T}(d)}$.} As the objective function of~\eqref{opt:collective_trading} depends only on the bidding decisions $\{ (s_t, u_t, v_t) \}_{t \in \mathcal{T}(d)}$, we may restrict the operational decisions $\{ (\bm{g}_t, \bm{p}_t, \bm{z}_t) \}_{t \in \mathcal{T}(d)}$ to depend only on the reserve activations without sacrificing optimality. Doing so results in the following variant of the trader's problem,
\begin{equation} 
\label{opt:collective_trading_rho}
\begin{aligned}
    &\text{sup} && \sum_{t \in \mathcal{T}(d)} \tilde{\pi}_t^{\rm{s}} s_t + \tilde{\pi}_t^{\rm{u}} u_t + \tilde{\pi}_t^{\rm{v}} v_t \\
    &\text{s.t.} && s_t, u_t, v_t \in \mathbb{R}, ~\bm{g}_t, \bm{p}_t, \bm{z}_t \in \mathcal{L}^A(\mathcal{F}_{[\downarrow(d),t]}^\rho) && \forall t \in \mathcal{T}(d)\\
    &	  && 0 \leq u_t, ~0 \leq v_t, ~\bm{0} \leq \bm{g}_t \leq \overline{\bm{g}}_t, ~\bm{0} \leq \bm{p}_t \leq \overline{\bm{p}}_t, ~\bm{0} \leq \bm{z}_t && \forall t \in \mathcal{T}(d),~\text{$\mathbb{P}_{\vert \bm\xi_{[\Downarrow(d)]}}$-a.s.}\\
    &	  && s_t + \revision{\kappa^{\rm{u}}_t}\rho_t^{\rm{u}} u_t - \revision{\kappa^{\rm{v}}_t}\rho_t^{\rm{v}} v_t = \bm{\eta}_t^\top \bm{g}_t - \bm{\zeta}_t^\top \bm{p}_t && \forall t \in \mathcal{T}(d),~\text{$\mathbb{P}_{\vert \bm\xi_{[\Downarrow(d)]}}$-a.s.}\\
    &	  && \underline{\bm{w}}_t \leq \bm{w}_{d-1} + \sum_{\tau = \downarrow(d)}^t \bm{\phi}_\tau + \mathbf{M} ( \bm{g}_\tau - \bm{p}_\tau + \bm{z}_\tau ) \leq \overline{\bm{w}}_t && \forall t \in \mathcal{T}(d),~\text{$\mathbb{P}_{\vert \bm\xi_{[\Downarrow(d)]}}$-a.s.}\\
    &	  && \bm{w}_d \leq \bm{w}_{d-1} + \sum_{\tau \in \mathcal{T}(d)} \bm{\phi}_\tau + \mathbf{M} ( \bm{g}_\tau - \bm{p}_\tau + \bm{z}_\tau ) && \text{$\mathbb{P}_{\vert \bm\xi_{[\Downarrow(d)]}}$-a.s.},
\end{aligned}
\end{equation}
where $\mathcal{F}_{[\downarrow(d),t]}^\rho=\sigma( \revision{ \{(\kappa_\tau^{\rm{u}}, \kappa_\tau^{\rm{v}}) \}_{\tau \in\mathcal T(d)}},  (\rho_\tau^{\rm{u}}, \rho_\tau^{\rm{v}}) \}_{\tau = \downarrow(d)}^t)$ denotes the $\sigma$-algebra generated by \revision{all reserve allocations of day~$d$ as well as all reserve activations within the interval $[\downarrow(d),t]$.} Even though~\eqref{opt:collective_trading_rho} constitutes a restriction of the trader's problem~\eqref{opt:collective_trading}, their optimal values can be shown to coincide. 

\begin{proposition}
\label{prop:rho_restriction_collective}
Under Approximation~\ref{apx:day_ahead_water_level}, the optimal values of~\eqref{opt:collective_trading} and \eqref{opt:collective_trading_rho} are equal.
\end{proposition}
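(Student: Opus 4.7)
The plan is to mimic the argument used to establish Proposition~\ref{prop:rho_restriction} almost verbatim, since the only structural differences between the individual and collective trading models are that (i)~the market bids $s_t,u_t,v_t$ are scalars rather than arc-indexed vectors, and (ii)~the energy delivery constraint reads $s_t+\rho_t^{\rm u}u_t-\rho_t^{\rm v}v_t=\bm\eta_t^\top\bm g_t-\bm\zeta_t^\top\bm p_t$ with inner products in place of Hadamard products. Neither of these modifications affects linearity of the almost-sure constraints nor the fact that the objective depends only on the bidding decisions, so the original construction goes through unchanged.

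Concretely, I first note that~\eqref{opt:collective_trading_rho} is a restriction of~\eqref{opt:collective_trading} because the former additionally requires $\bm g_t,\bm p_t,\bm z_t\in\mathcal L^A(\mathcal F^\rho_{[\downarrow(d),t]})$. Hence it suffices to show that for any realization of the contextual covariates $\bm\xi_{[\Downarrow(d)]}$ and any feasible solution $\{(s_t,u_t,v_t,\bm g_t,\bm p_t,\bm z_t)\}_{t\in\mathcal T(d)}$ of~\eqref{opt:collective_trading} one can exhibit a feasible solution of~\eqref{opt:collective_trading_rho} attaining the same objective value. Following the blueprint of Proposition~\ref{prop:rho_restriction}, I would keep the bidding decisions unchanged and replace the flows by their conditional expectations
\begin{equation*}
(\bm g_t',\bm p_t',\bm z_t')=\mathbb E_{\mid\bm\xi_{[\Downarrow(d)]}}\!\left[(\bm g_t,\bm p_t,\bm z_t)\,\big|\,\{(\rho_\tau^{\rm u},\rho_\tau^{\rm v})\}_{\tau=\downarrow(d)}^{t}\right]\qquad\forall t\in\mathcal T(d).
\end{equation*}
Equality of objective values is then immediate because the cost expression involves only $s_t$, $u_t$ and $v_t$.

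The verification of feasibility proceeds exactly as in the individual case. Non-anticipativity $\bm g_t',\bm p_t',\bm z_t'\in\mathcal L^A(\mathcal F^\rho_{[\downarrow(d),t]})$ follows from the defining property of conditional expectations. The sign and capacity bounds $\bm 0\le\bm g_t'\le\overline{\bm g}_t$, $\bm 0\le\bm p_t'\le\overline{\bm p}_t$ and $\bm 0\le\bm z_t'$ are preserved because conditional expectation is a positive linear operator. For the (now scalar) energy delivery equality and the vector-valued intra-day and end-of-day reservoir constraints, I exploit linearity: taking conditional expectations of the constraints that $(\bm g_t,\bm p_t,\bm z_t)$ satisfies $\mathbb P_{\mid\bm\xi_{[\Downarrow(d)]}}$-almost surely yields the analogous constraints for $(\bm g_t',\bm p_t',\bm z_t')$, provided that the conditioning $\sigma$-algebra is rich enough to treat the original flows as measurable. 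This is where the serial independence assumption on $\{(\rho_t^{\rm u},\rho_t^{\rm v})\}_{t\in\mathcal T(d)}$ together with its independence of all remaining randomness is invoked: since $(\bm g_t,\bm p_t,\bm z_t)\in\mathcal L^A(\mathcal F_{[\downarrow(d),t]})$ is independent of the future activations $\{(\rho_\tau^{\rm u},\rho_\tau^{\rm v})\}_{\tau=t+1}^{T}$ under $\mathbb P_{\mid\bm\xi_{[\Downarrow(d)]}}$, I have
\begin{equation*}
(\bm g_t',\bm p_t',\bm z_t')=\mathbb E_{\mid\bm\xi_{[\Downarrow(d)]}}\!\left[(\bm g_t,\bm p_t,\bm z_t)\,\big|\,\{(\rho_\tau^{\rm u},\rho_\tau^{\rm v})\}_{\tau=\downarrow(d)}^{T}\right]\quad\mathbb P_{\mid\bm\xi_{[\Downarrow(d)]}}\text{-a.s.},
\end{equation*}
which allows one to condition the entire day's worth of constraints on the full activation path and push the conditional expectation inside every linear relation, including the end-of-day reservoir bound (which uses Approximation~\ref{apx:day_ahead_water_level} to treat $\bm w_d$ as deterministic).

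There is essentially no new obstacle: the argument carries over line by line, with the scalar delivery constraint $s_t+\rho_t^{\rm u}u_t-\rho_t^{\rm v}v_t=\bm\eta_t^\top\bm g_t-\bm\zeta_t^\top\bm p_t$ still being linear in the flows. The proof in the paper can therefore be written in two lines, stating that it is analogous to the proof of Proposition~\ref{prop:rho_restriction} with the obvious notational adjustments and that the details are omitted for brevity.
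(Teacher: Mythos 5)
Your proposal is correct and follows essentially the same route as the paper: the authors' own proof simply states that, as in Proposition~\ref{prop:rho_restriction}, one conditions the flow decisions of any feasible solution of~\eqref{opt:collective_trading} on the history of reserve activations to obtain a feasible solution of~\eqref{opt:collective_trading_rho} with the same objective value. Your detailed verification (conditional expectations preserving the bounds, the linear constraints after conditioning on the full activation path via serial independence, and the deterministic $\bm w_d$ under Approximation~\ref{apx:day_ahead_water_level}) is exactly the argument the paper omits for brevity.
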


\revision{Approximation~\ref{apx:day_ahead_water_level} thus reduces the trader's problem~\eqref{opt:collective_trading} to the stochastic program~\eqref{opt:collective_trading_rho}, which accommodates only~$H+1$ decision stages and whose wait-and-see decisions depend only on the reserve allocations and activations. However,~\eqref{opt:collective_trading_rho} still constitutes an infinite-dimensional linear program. We will now show that problem~\eqref{opt:collective_trading_rho} is indeed equivalent to an efficiently solvable linear program. 
As a first step towards this goal, we derive a family of valid inequalities that may be appended to problem~\eqref{opt:collective_trading_rho} without affecting its feasible set.} 

\begin{proposition}
\label{prop:down_cut_collective}
Any feasible solution of \eqref{opt:collective_trading_rho} satisfies $s_t - \revision{\overline{\rho}^{\rm{v}}_t} v_t \geq - \bm{\zeta}_t^\top \overline{\bm{p}}_t$ for all $t \in \mathcal{T}(d)$.
\end{proposition}

The valid inequalities of Proposition~\ref{prop:down_cut_collective} characterize the maximum bids on the reserve-down market (collectively across all arcs of the reservoir system) that can be honored under all possible realizations of the reserve activations. \revision{To see this, note that $\zeta_{t,a} \overline{p}_{t,a}$ represents the maximum amount of energy that can be absorbed on arc~$a$ by pumping. In case of a call-off on the reserve-down market, the energy production on all arcs, combined, can therefore be reduced at most by $s_t + \sum_{a \in \mathcal{A}}\zeta_{t,a} \overline{p}_a$. Appending these valid inequalities to problem~\eqref{opt:collective_trading_rho} yields}
\begin{equation}
\label{opt:collective_trading_cuts}
\begin{aligned}
    &\sup && \sum_{t \in \mathcal{T}(d)} \tilde{\pi}_t^{\rm{s}} s_t + \tilde{\pi}_t^{\rm{u}} u_t + \tilde{\pi}_t^{\rm{v}} v_t \\
    &\text{s.t.} && s_t, u_t, v_t \in \mathbb{R}, ~\bm{g}_t, \bm{p}_t, \bm{z}_t \in \mathcal{L}^A(\mathcal{F}_{[\downarrow(d),t]}^\rho) && \forall t \in \mathcal{T}(d)\\
    &	  && 0 \leq u_t, ~0 \leq v_t, ~\bm{0} \leq \bm{g}_t \leq \overline{\bm{g}}_t, ~\bm{0} \leq \bm{p}_t \leq \overline{\bm{p}}_t, ~\bm{0} \leq \bm{z}_t && \forall t \in \mathcal{T}(d),~\text{$\mathbb{P}_{\vert \bm\xi_{[\Downarrow(d)]}}$-a.s.}\\
    &	  && s_t + \revision{\kappa^{\rm u}_t}\rho_t^{\rm{u}} u_t - \revision{\kappa^{\rm v}_t}\rho_t^{\rm{v}} v_t = \bm{\eta}_t^\top \bm{g}_t - \bm{\zeta}_t^\top \bm{p}_t && \forall t \in \mathcal{T}(d),~\text{$\mathbb{P}_{\vert \bm\xi_{[\Downarrow(d)]}}$-a.s.}\\
    &	  && s_t - \revision{\overline{\rho}^{\rm{v}}_t} v_t \geq - \bm{\zeta}_t^\top \overline{\bm{p}}_t && \forall t \in \mathcal{T}(d) \\
    &	  && \underline{\bm{w}}_t \leq \bm{w}_{d-1} + \sum_{\tau = \downarrow(d)}^t \bm{\phi}_\tau + \mathbf{M} ( \bm{g}_\tau - \bm{p}_\tau + \bm{z}_\tau ) \leq \overline{\bm{w}}_t && \forall t \in \mathcal{T}(d),~\text{$\mathbb{P}_{\vert \bm\xi_{[\Downarrow(d)]}}$-a.s.}\\
    &	  && \bm{w}_d \leq \bm{w}_{d-1} + \sum_{\tau \in \mathcal{T}(d)} \bm{\phi}_\tau + \mathbf{M} ( \bm{g}_\tau - \bm{p}_\tau + \bm{z}_\tau ) && \text{$\mathbb{P}_{\vert \bm\xi_{[\Downarrow(d)]}}$-a.s.},
\end{aligned}
\end{equation}
which is equivalent to problem~\eqref{opt:collective_trading_rho} by Proposition~\ref{prop:down_cut_collective}.

Next, we show that problem~\eqref{opt:collective_trading_cuts} is equivalent to the reduced stochastic program
\begin{equation*} \tag{CT${}^{\rm{r}}$}
\label{opt:reduced_collective_trading}
\begin{aligned}
    &\sup && \sum_{t \in \mathcal{T}(d)} \tilde{\pi}_t^{\rm{s}} s_t + \tilde{\pi}_t^{\rm{u}} u_t + \tilde{\pi}_t^{\rm{v}} v_t \\
    &\text{s.t.} && s_t, u_t, v_t \in \mathbb{R}, ~\bm{g}_t, \bm{p}_t, \bm{z}_t \in \mathbb{R}^A && \forall t \in \mathcal{T}(d)\\
    &	  && 0 \leq u_t, ~0 \leq v_t, ~\bm{0} \leq \bm{g}_t \leq \overline{\bm{g}}_t, ~\bm{0} \leq \bm{p}_t \leq \overline{\bm{p}}_t, ~\bm{0} \leq \bm{z}_t && \forall t \in \mathcal{T}(d)\\
    &	  && s_t + \revision{\overline{\rho}^{\rm{u}}_t} u_t = \bm{\eta}_t^\top \bm{g}_t - \bm{\zeta}_t^\top \bm{p}_t && \forall t \in \mathcal{T}(d)\\
    &	  && s_t - \revision{\overline{\rho}^{\rm{v}}_t} v_t \geq - \bm{\zeta}_t^\top \overline{\bm{p}}_t && \forall t \in \mathcal{T}(d) \\
    &	  && \underline{\bm{w}}_t \leq \bm{w}_{d-1} + \sum_{\tau = \downarrow(d)}^t \bm{\phi}_\tau + \mathbf{M} ( \bm{g}_\tau - \bm{p}_\tau + \bm{z}_\tau ) \leq \overline{\bm{w}}_t && \forall t \in \mathcal{T}(d)\\
    &	  && \bm{w}_d \leq \bm{w}_{d-1} + \sum_{\tau \in \mathcal{T}(d)} \bm{\phi}_\tau + \mathbf{M} ( \bm{g}_\tau - \bm{p}_\tau + \bm{z}_\tau ).
\end{aligned}
\end{equation*}
Problem~\eqref{opt:reduced_collective_trading} involves only here-and-now decisions, and its constraints are deterministic conditional on $\bm\xi_{[\Downarrow(d)]}$. In fact, it constitutes a linear program of size $\mathcal O(A\cdot H)$.



\begin{proposition}
\label{prop:robusttodeterministic_collective1}
Under Approximation~\ref{apx:day_ahead_water_level}, the optimal value of problem~\eqref{opt:reduced_collective_trading} is non-inferior to that of problem~\eqref{opt:collective_trading_cuts}.
\end{proposition}

\revision{
Note that Proposition~\ref{prop:robusttodeterministic_collective1} does not prove the equivalence of the problems~\eqref{opt:reduced_collective_trading} and~\eqref{opt:collective_trading_cuts}. Indeed, proving this equivalence directly appears to be difficult. Instead, Appendix~\ref{sec:reduction_individual} proves that under Approximation~\ref{apx:day_ahead_water_level},  problem~\eqref{opt:collective_trading} is a restriction of problem~\eqref{opt:reduced_collective_trading}. Since Propositions~\ref{prop:rho_restriction_collective}--\ref{prop:robusttodeterministic_collective1} also imply that~\eqref{opt:reduced_collective_trading} is a restriction of~\eqref{opt:collective_trading}, both problems indeed share the same optimal value.
The relations between the different variants of the trader's problem are illustrated in Figure~\ref{fig:proofplan}.
Every arc encodes a relation $A \leq B$, where $A$ and $B$ represent the optimal values of the problems at the arc's tail and head, respectively. Dashed arcs indicate trivial relaxations, and solid arcs represent non-trivial implications proved in the referenced propositions. Interpreting Figure~\ref{fig:proofplan} as a directed graph, we note that Proposition~\ref{prop:robusttodeterministic_collective1} and Appendix~\ref{sec:reduction_individual} complete a counter-clockwise loop that visits each node, which implies that the optimal values of {\em all} optimization problems are in fact equal. 

%
{\color{black}
\begin{figure}[h]
\begin{center}
\begin{tikzpicture}[scale=0.9, every node/.style={scale=0.9}, squarednode/.style={rectangle, draw=black, fill=gray!5, very thick, minimum width=15mm, minimum height=10mm}, node distance=4.7cm]
	\node[squarednode] (C1) {\eqref{opt:collective_trading}};
	\node[squarednode] (C2) [right of=C1] {\eqref{opt:collective_trading_rho}};
	\node[squarednode] (C3) [right of=C2] {\eqref{opt:collective_trading_cuts}};
	\node[squarednode] (C4) [right of=C3] {\eqref{opt:reduced_collective_trading}};

	\draw[-{Latex[length=2mm]}] ($(C1.east)+(0mm,1mm)$) -- node[above]{Proposition~\ref{prop:rho_restriction_collective}} ($(C2.west)+(0mm,1mm)$);
	
	\draw[-{Latex[length=2mm]}] ($(C2.east)+(0mm,1mm)$) -- node[above]{Proposition~\ref{prop:down_cut_collective}} ($(C3.west)+(0mm,1mm)$);
	
	\draw[-{Latex[length=2mm]}] ($(C3.east)+(0mm,0mm)$) -- node[above]{Proposition~\ref{prop:robusttodeterministic_collective1}} ($(C4.west)+(0mm,0mm)$);
	
	\draw[{Latex[length=2mm]}-,dashed] ($(C1.east)+(0mm,-1mm)$) --  ($(C2.west)+(0mm,-1mm)$);
	
	\draw[{Latex[length=2mm]}-,dashed] ($(C2.east)+(0mm,-1mm)$) --  ($(C3.west)+(0mm,-1mm)$);	
	
    \draw[-] ($(C4.south)+(0mm,0mm)$) --  ($(C4.south)+(0mm,-40mm)$);
	
	\draw[-{Latex[length=2mm]}] ($(C1.south)+(0mm,-40mm)$) --  ($(C1.south)+(0mm,0mm)$);
	
	\draw[-] ($(C4.south)+(0mm,-40mm)$) -- node[above]{Appendix~\ref{sec:reduction_individual}} ($(C1.south)+(0mm,-40mm)$);
	
\end{tikzpicture}
\end{center}
	\caption{Illustration of the relations between different variants of the trader's problem. Dashed arcs represent trivial relaxations, and solid arcs represent non-trivial implications proved in the referenced propositions.} 
	\label{fig:proofplan}
\end{figure}
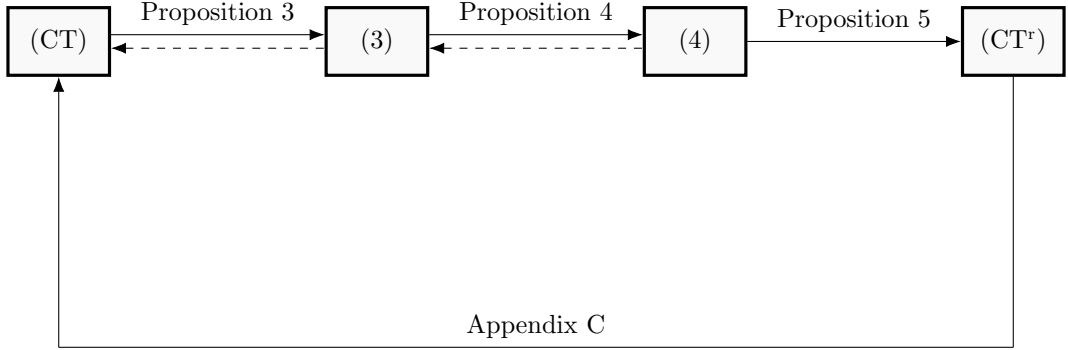
}

The results of Section~\ref{sec:reduction_collective} as visualized in Figure~\ref{fig:proofplan} culminate in the following main theorem. }


\begin{theorem}
\label{thm:all_collective}
Under Approximation~\ref{apx:day_ahead_water_level}, the optimal values of~\eqref{opt:collective_trading} and~\eqref{opt:reduced_collective_trading} are equal.
\end{theorem}

\revision{In the remainder of this section we leverage Theorem~\ref{thm:all_collective} to construct a restriction of the bidding model~\eqref{opt:collective_bidding_model} that is susceptible to further approximations and numerical solution. To this end, note first that Approximation~\ref{apx:day_ahead_water_level} restricts the planner's problem~\eqref{opt:collective_bidding_model_dailywater_sep} to}
\begin{equation}
\label{opt:reduced_collective_planning}
 \tag{CP${}^{\rm{r}}$}
\begin{array}{c@{~~~}l@{\,}l}
    \text{sup} & \displaystyle \sum_{d \in \mathcal{D}}  \mathbb{E} \big[ \Pi_d( \bm w_{d-1}, \bm w_{d}, & \bm\xi_{[\Downarrow(d)]}) \big] \\[1ex]
    \text{s.t.} & \bm{w}_d \in \mathcal{L}^R(\mathcal{F}_{[\Downarrow(d)]}) & \forall d \in \mathcal{D} \\
    & \underline{\bm{w}}_d \leq \bm{w}_d \leq \overline{\bm{w}}_d & \forall d \in \mathcal{D},~ \text{$\mathbb{P}$-a.s.}
    \end{array}
\end{equation}
\revision{where the end-of-day reservoir levels $\bm w_d$ are chosen one day in advance and thus adapt to information available at time $\Downarrow(d)$. Under this information restriction, Theorem~\ref{thm:all_collective} allows us to compute the optimal value $\Pi_d( \bm w_{d-1}, \bm w_{d}, \bm\xi_{[\Downarrow(d)]})$ of the trader's problem~\eqref{opt:collective_trading} by solving the linear program~\eqref{opt:reduced_collective_trading}, which enables us to prove that the planner's reduced problem~\eqref{opt:reduced_collective_planning} is equivalent~to}
\begin{equation}
\label{opt:collective_bidding_model_dailywater_sep_apx}
\tag{C${}^{\rm{r}}$}
\begin{aligned}
    &\hspace{-1mm}\text{sup} && \sum_{t \in \mathcal{T}} \mathbb{E}\left[ \tilde \pi_t^{\rm{s}}\,s_t + \tilde \pi_t^{\rm{u}} \,u_t + \tilde \pi_t^{\rm{v}} \, v_t 
\right]\\
    &\hspace{-1mm}\text{s.t.} &&s_t, u_t, v_t \in \mathcal{L}(\mathcal{F}_{[T]}), ~\bm{g}_t, \bm{p}_t, \bm{z}_t \in \mathcal{L}^A(\mathcal{F}_{[T]}), ~\bm{w}_d \in  \mathcal{L}^R(\mathcal{F}_{[\Downarrow(d)]}) && \forall d \in \mathcal{D}, ~\forall t \in \mathcal{T}(d)\\
    &\hspace{-1mm}	  && 0 \leq u_t, ~0 \leq v_t, ~\bm{0} \leq \bm{g}_t \leq \overline{\bm{g}}_t, ~\bm{0} \leq \bm{p}_t \leq \overline{\bm{p}}_t, ~\bm{0} \leq \bm{z}_t && \hspace{0mm} \forall d \in \mathcal{D}, ~\forall t \in \mathcal{T}(d),~\text{$\mathbb{P}$-a.s.}\\
    &\hspace{-1mm}	  && s_t + \revision{\overline{\rho}^{\rm{u}}_t} u_t = \bm{\eta}_t^\top \bm{g}_t - \bm{\zeta}_t^\top \bm{p}_t, ~s_t - \revision{\overline{\rho}^{\rm{v}}_t}v_t \geq - \bm{\zeta}_t^\top \overline{\bm{p}}_t && \hspace{0mm} \forall d \in \mathcal{D}, ~\forall t \in \mathcal{T}(d),~\text{$\mathbb{P}$-a.s.}\\
    &\hspace{-1mm}	  && \underline{\bm{w}}_t  \leq \bm{w}_{d-1} + \sum_{\tau=\downarrow(d)}^t \bm{\phi}_\tau + \mathbf{M}(\bm{g}_\tau - \bm{p}_\tau + \bm{z}_\tau) \leq \overline{\bm{w}}_t && \hspace{0mm} \forall d \in \mathcal{D}, ~\forall t \in \mathcal{T}(d),~\text{$\mathbb{P}$-a.s.}\\
    &\hspace{-1mm}	  && \bm{w}_d \leq \bm{w}_{d-1} + \sum_{\tau \in \mathcal{T}(d)}\bm{\phi}_\tau + \mathbf{M}(\bm{g}_\tau - \bm{p}_\tau + \bm{z}_\tau) && \hspace{0mm} \forall d \in \mathcal{D},~\text{$\mathbb{P}$-a.s.}\\
    &\hspace{-1mm}	  && \underline{\bm{w}}_d \leq \bm{w}_d \leq \overline{\bm{w}}_d  && \forall d \in \mathcal{D},~\hspace{0mm} \text{$\mathbb{P}$-a.s.}
\end{aligned}
\end{equation}
Problem~\eqref{opt:collective_bidding_model_dailywater_sep_apx} can be viewed as a reduction of the original bidding model~\eqref{opt:collective_bidding_model}.

\begin{theorem}
\label{thm:all_collective_bidding}
The optimal values of the problems~\eqref{opt:reduced_collective_planning} and \eqref{opt:collective_bidding_model_dailywater_sep_apx} are equal, and they are smaller than or equal to the optimal value of problem~\eqref{opt:collective_bidding_model}.
\end{theorem}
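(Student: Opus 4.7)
The plan is to mirror the proof of Theorem~\ref{thm:all_individual_bidding} step by step, since the chain of equivalences and reductions for the collective bidding model has exactly the same structure as for the individual bidding model, and all the necessary ingredients have already been assembled.

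First, I would invoke Propositions~\ref{prop:collective_inequality_equivalence} and~\ref{prop:collective_bilevel_separation} to conclude that the original collective bidding model~\eqref{opt:collective_bidding_model} has the same optimal value as the planner-trader decomposition~\eqref{opt:collective_bidding_model_dailywater_sep}. Next, I would observe that problem~\eqref{opt:reduced_collective_planning} is obtained from problem~\eqref{opt:collective_bidding_model_dailywater_sep} by applying Approximation~\ref{apx:day_ahead_water_level}, which restricts each end-of-day reservoir level $\bm w_d$ to be measurable with respect to $\mathcal{F}_{[\Downarrow(d)]}$ rather than $\mathcal{F}_{[\uparrow(d)]}$. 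Since this is a pure information restriction (shrinking the feasible set of the outer layer without altering the trader's subproblem), the optimal value of~\eqref{opt:reduced_collective_planning} cannot exceed that of~\eqref{opt:collective_bidding_model_dailywater_sep}, and hence is bounded above by the optimal value of~\eqref{opt:collective_bidding_model}. This establishes the inequality part of the statement.

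For the equality between~\eqref{opt:reduced_collective_planning} and~\eqref{opt:collective_bidding_model_dailywater_sep_apx}, I would exploit the fact that Approximation~\ref{apx:day_ahead_water_level} is now in force, so Theorem~\ref{thm:all_collective} applies and tells us that the inner trader's value $\Pi^{\rm C}_d(\bm w_{d-1},\bm w_d,\bm\xi_{[\Downarrow(d)]})$ coincides $\mathbb{P}$-almost surely with the optimal value of the deterministic linear program~\eqref{opt:reduced_collective_trading} (conditional on the contextual covariates). Substituting this reformulation back into the outer layer~\eqref{opt:reduced_collective_planning} yields a joint stochastic program that maximizes an expectation of a sum of daily inner maxima. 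Applying the interchangeability principle (Theorem~14.60 of \citet{ref:Rockafellar-10}) allows me to pull the inner maximizations over the bidding and flow decisions $(s_t,u_t,v_t,\bm g_t,\bm p_t,\bm z_t)$ out of the expectation and the sum, absorbing them into the outer problem. Relabelling the non-anticipativity constraints of these hourly decisions as $\mathcal{F}_{[T]}$-measurable (i.e., under the exact perfect information relaxation that is already implicit in~\eqref{opt:reduced_collective_trading}) then delivers exactly problem~\eqref{opt:collective_bidding_model_dailywater_sep_apx}.

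The main obstacle is the bookkeeping in the interchange step: one has to ensure that the conditional expectations $\tilde\pi_t^{\rm s}$, $\tilde\pi_t^{\rm u}$, $\tilde\pi_t^{\rm v}$ in the objective are reconciled with the unconditional expectation in~\eqref{opt:collective_bidding_model_dailywater_sep_apx} via the tower property, and that the implicit infeasibility of~\eqref{opt:reduced_collective_trading} for ill-chosen reservoir targets translates correctly into the joint constraint system. Since the arguments here are verbatim identical to those used in the proof of Theorem~\ref{thm:all_individual_bidding}, with \eqref{opt:individual_trading}/\eqref{opt:reduced_individual_trading} replaced by \eqref{opt:collective_trading}/\eqref{opt:reduced_collective_trading} and Theorem~\ref{thm:all_individual} replaced by Theorem~\ref{thm:all_collective}, the proof can be given in one or two lines by reference to that earlier argument.
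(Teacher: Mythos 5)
Your proposal is correct and follows essentially the same route as the paper: the paper's own proof of this theorem simply states that it mirrors the proof of Theorem~\ref{thm:all_individual_bidding}, which is exactly the chain you spell out (Propositions~\ref{prop:collective_inequality_equivalence} and~\ref{prop:collective_bilevel_separation} for the inequality via Approximation~\ref{apx:day_ahead_water_level}, then Theorem~\ref{thm:all_collective} to replace $\Pi^{\rm C}_d$ by the value of~\eqref{opt:reduced_collective_trading}, and Theorem~14.60 of \citet{ref:Rockafellar-10} to interchange maximization and expectation). No gaps.
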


\revision{The bidding model~\eqref{opt:collective_bidding_model} and its reduction~\eqref{opt:collective_bidding_model_dailywater_sep_apx} differ in the following aspects.
\begin{itemize}
\item In problem~\eqref{opt:collective_bidding_model} the market decisions $({s}_t, {u}_t, {v}_t)$ are adapted to information that is available at the {\em beginning of day $d(t)$}, while the operational decisions $(\bm{g}_t, \bm{p}_t, \bm{z}_t)$ are adapted to {\em real-time information}. In contrast, in~\eqref{opt:collective_bidding_model_dailywater_sep_apx} these decisions are taken under {\em perfect information}. 
\item Problem~\eqref{opt:collective_bidding_model} has {\em random recourse} because the reserve market bids in the energy delivery constraints are multiplied by the uncertain reserve activations. In contrast, problem~\eqref{opt:collective_bidding_model_dailywater_sep_apx} has {\em fixed recourse} because all reserve activations were eliminated from the constraints.
\item Problem~\eqref{opt:collective_bidding_model_dailywater_sep_apx} accommodates the {\em valid cuts} derived in Proposition~\ref{prop:down_cut_collective}, which are absent in~\eqref{opt:collective_bidding_model}.
\end{itemize}

We highlight again that problem~\eqref{opt:collective_bidding_model_dailywater_sep_apx} was obtained from the original bidding model~\eqref{opt:collective_bidding_model} by applying a single information restriction, whereby the end-of-day reservoir levels must be chosen one day in advance. No other approximations have been applied. Finally, the equivalence between~\eqref{opt:reduced_collective_planning} and \eqref{opt:collective_bidding_model_dailywater_sep_apx} established in Theorem~\ref{thm:all_collective_bidding} is in terms of the problems' optimal values and not necessarily of their optimal solutions. In essence, the proof shows that the optimal market decisions ${s}_t$,  ${u}_t$ and ${v}_t$ as well as the optimal operational decisions $\bm{g}_t$, $\bm{p}_t$ and $\bm{z}_t$ of problem~\eqref{opt:collective_bidding_model_dailywater_sep_apx}, which may fail to be non-anticipative, can be converted to non-anticipative decisions
\begin{equation*}
\begin{aligned}
    &{s}'_t = \mathbb{E} \left[ {s}_t \vert \bm\xi_{[\Downarrow (t)]} \right], \quad 
    {u}'_t = \mathbb{E} \left[ {u}_t \vert \bm\xi_{[\Downarrow (t)]} \right], \quad
    {v}'_t = \mathbb{E} \left[ {v}_t \vert \bm\xi_{[\Downarrow (t)]} \right] \\
    &\bm{g}'_t = \mathbb{E} \left[ \bm{g}_t \vert \bm\xi_{[\Downarrow (t)]} \right], \quad 
    \bm{p}'_t = \mathbb{E} \left[ \bm{p}_t \vert \bm\xi_{[\Downarrow (t)]} \right], \quad
    \bm{z}'_t = \mathbb{E} \left[ \bm{z}_t \vert \bm\xi_{[\Downarrow (t)]} \right]
\end{aligned}
\end{equation*}
by taking conditional expectations. Because the objective and constraint functions are all linear in these decision variables, the new solution remains feasible and---in fact---optimal in problem~\eqref{opt:collective_bidding_model_dailywater_sep_apx}. This argument is only valid because Approximation~\ref{apx:day_ahead_water_level} ensures that problem~\eqref{opt:collective_bidding_model_dailywater_sep_apx} has fixed recourse.
}

\section{Numerical Solution of the Reduced Bidding Model}
\label{sec:num_exp_formulation}

\revision{Although Approximation~\ref{apx:day_ahead_water_level} is at the core of our reduction of the bidding model~\eqref{opt:collective_bidding_model} to~\eqref{opt:collective_bidding_model_dailywater_sep_apx}, it is not entirely innocent 
as it may discourage bidding on the reserve markets (\emph{cf.}~Section~\ref{sec:reduction_collective}).
%
Moreover, the reduced planner's problem~\eqref{opt:reduced_collective_planning} constitutes a large-scale stochastic program that is challenging to solve. To simultaneously alleviate both shortcomings, this section develops variants of the reduced planner's and trader's problems~\eqref{opt:reduced_collective_planning} and~\eqref{opt:reduced_collective_trading} with the following characteristics: \emph{(i)} the reduced planner's problem employs an affine decision rule approximation to  result in a two-stage stochastic program that can be solved efficiently using the well-known sample average approximation; \emph{(ii)} the reduced trader's problem accounts not only for the worst possible reserve activation sequence but also for the nominal activation sequence; and \emph{(iii)} the reduced trader's problem contains a valuation of the nominal end-of-day reservoir levels in the objective function. We also incorporate both problems into a rolling horizon framework that repeatedly solves the planner's and trader's problems and always only implements the decisions of the first day. Under these changes, the equivalences of Theorems~\ref{thm:all_collective} and~\ref{thm:all_collective_bidding} no longer hold, but we expect the resulting variants of problems~\eqref{opt:reduced_collective_planning} and~\eqref{opt:reduced_collective_trading} to perform better in actual case studies (\emph{cf.}~Section~\ref{sec:case_study}).

In the following, we first describe how to efficiently solve and extract water values from problem~\eqref{opt:collective_bidding_model_dailywater_sep_apx}. Afterwards, we discuss how these water values can be used to compute the first-day decisions by solving a variant of the trader's problem that accounts both for the water target levels and for valuations of the end-of-day water levels. 
}


In order to compute water values, 
we consider an equivalent reformulation of the reduced planner's problem~\eqref{opt:reduced_collective_planning} that contains the superfluous decision variables $\bm{w}_d^-$, $d \in \mathcal{D}$, to denote the water levels at the \emph{beginning} of day $d \in \mathcal{D}$:
\begin{equation}\label{eq:collective_planer_revised}
\begin{array}{c@{~~~}l@{\quad}l}
    \text{sup} & \displaystyle \sum_{d \in \mathcal{D}}  \mathbb{E} \big[ \Pi_d( \bm w_d^-, \bm w_{d}, \bm\xi_{[\Downarrow(d)]}) \big] \\[1ex]
    \text{s.t.} & \bm{w}_d^- \in \mathcal{L}^R(\mathcal{F}_{[\Downarrow(d - 1)]}),~ \bm{w}_d \in \mathcal{L}^R(\mathcal{F}_{[\Downarrow(d)]}) & \forall d \in \mathcal{D} \\
    & \underline{\bm{w}}_d \leq \bm{w}_d \leq \overline{\bm{w}}_d & \forall d \in \mathcal{D},~ \text{$\mathbb{P}$-a.s.} \\
    & \bm{w}_d^- = \bm{w}_{d-1} & \forall d \in \mathcal{D},~ \text{$\mathbb{P}$-a.s.} 
    \end{array}
\end{equation}
The shadow prices of the newly introduced constraints $\bm{w}_d^- = \bm{w}_{d-1}$ can be interpreted as water values at the end of day $d - 1$, and they will be used later on in our variant of the trader's problem.

While the reduced bidding model~\eqref{eq:collective_planer_revised} has fixed recourse and takes all market bids and operational decisions under perfect information, it still constitutes an infinite-dimensional linear program over a space of measurable functions, and thus there is little hope to solve it {\em exactly}. To make this problem amenable to numerical solution, we further reduce it to a two-stage stochastic program by restricting the reservoir filling levels to parsimonious {\em affine decision rules} that depend on the observable random parameters only through a few judiciously chosen features.

\begin{approximation}[Affine Decision Rule Restriction]
\label{apx:decision_rule}
For all $d \in \mathcal{D}$, the end-of-day reservoir filling levels are representable as
\begin{equation*}
\textstyle \bm{w}_d = \bm{\lambda}_d + \bm \Phi_d \left( \sum_{\tau \in \mathcal{T}(d)} \bm{\phi}_\tau \right) + \mathbf{\Psi}_d \left( \sum_{\tau = 1}^{\uparrow(d-1)} \bm{\phi}_\tau \right) + \bm{\mu}_d \left( \frac{1}{H}\sum_{\tau \in \mathcal{T}(d)} \pi_\tau^{\rm{s}} \right)
\end{equation*}
for some fixed vectors $\bm\lambda_d, \bm\mu_d \in \mathbb{R}^R$ and matrices $\bm\Phi_d, \bm \Psi_d \in \mathbb{R}^{R \times R}$. 
\end{approximation}

\revision{In principle, the decision rule for $\bm{w}_d$ can depend on any linear transformation of $\bm{\xi}_{[\Downarrow(d)]}$. Generally, richer feature vectors lead to more adaptive (and hence better) decisions at a higher computational cost. We do not include as features the reserve \revision{allocations and} activations since they are assumed to be serially independent as well as independent of all other exogenous uncertainties. Instead, Approximation~\ref{apx:decision_rule} restricts the reservoir filling levels at the end of day~$d$ to affine functions of the following features: ($i$)~the cumulative natural inflows into the reservoirs across day~$d$, ($ii$)~the cumulative natural inflows into the reservoirs across the planning horizon until the end of day $d-1$ and ($iii$)~the average spot price on day~$d$. The proposed affine decision rules are parsimonious as they compress the history of all observations into a few relevant features, but they are flexible enough to allow the planner to set different reservoir targets for wet and dry days, for wet and dry seasons as well as for high- and low-price days.} All of the proposed features are observable at the beginning of day $d$, and therefore the non-anticipativity conditions $\bm{w}_d \in \mathcal{L}^R (\mathcal{F}_{[\Downarrow(d)]})$, $d \in \mathcal{D}$, imposed by Approximation~\ref{apx:day_ahead_water_level} are automatically satisfied. Affine decision rule approximations are routinely used in multistage robust optimization and stochastic programming problems; we refer to the reviews by \cite{BenTal09}, \cite{Iancu2015a} and \cite{YGH18:aro_survey}.


Note that Approximation~\ref{apx:decision_rule} restricts the functional form of the reservoir targets, and thus it results in a conservative lower bound on the optimal value of problem~\eqref{opt:collective_bidding_model_dailywater_sep_apx}, which itself underestimates the optimal value of the original bidding model~\eqref{opt:collective_bidding_model} by virtue of Theorem~\ref{thm:all_collective_bidding}. We further emphasize that Approximation~\ref{apx:decision_rule} reduces problem~\eqref{eq:collective_planer_revised} to a two-stage stochastic program with here-and-now decisions $\bm\lambda_d, \bm\mu_d \in \mathbb{R}^R$ and $\bm\Phi_d, \bm \Psi_d \in \mathbb{R}^{R \times R}$, $d\in\mathcal D$, which are chosen without any information about $\bm \xi_{[T]}$, and with wait-and-see decisions $s_t, u_t, v_t\in \mathcal{L}(\mathcal{F}_{[T]})$ and $\bm{g}_t, \bm{p}_t, \bm{z}_t \in \mathcal{L}^A(\mathcal{F}_{[T]})$, $t \in \mathcal{T}$, which are chosen under perfect information about $\bm \xi_{[T]}$. The emerging two-stage stochastic program can then be solved with the popular sample average approximation \citep{Shapiro09}.

\begin{approximation}[Sample Average Approximation]
\label{apx:saa}
The original probability measure $\mathbb P$ is replaced with a discrete empirical measure $\mathbb{\widehat P} = \frac{1}{N}\sum_{n=1}^N \delta_{\omega^{(n)}}$, where $\delta_{\omega^{(n)}}$ stands for the Dirac point mass at $\omega^{(n)}$, and where $\omega^{(n)}\in\Omega$, $n=1,\ldots, N$, constitute independent samples from $\mathbb P$.
\end{approximation}

The combination of affine decision rules and the sample average approximation is not new and has been explored, among others, by \cite{Vayanos12} and \cite{Bodur18} in the context of robust optimization and stochastic programming, respectively. We emphasize that, even though we use an empirical probability measure $\mathbb{\widehat P}$ when solving the decision rule approximation of problem~\eqref{opt:collective_bidding_model_dailywater_sep_apx}, the conditional expectations $\tilde \pi_t^{\rm{s}}$, $\tilde \pi_t^{\rm{u}}$ and $\tilde \pi_t^{\rm{v}}$ in the objective function of~\eqref{opt:collective_bidding_model_dailywater_sep_apx} are pre-computed under the original measure $\mathbb P$. We solve the linear program obtained by applying Approximations~\ref{apx:decision_rule} and~\ref{apx:saa} to problem~\eqref{eq:collective_planer_revised} and record, \revision{in addition to the water target levels $\bm{w}_1$}, the water values $\bm{\vartheta}_1 \in \mathbb{R}^R$ corresponding to the water preservation constraints $\bm{w}_2^- = \bm{w}_1$. Note that these water values are deterministic since $\bm w_1$ and $\bm{w}_2^-$ are deterministic by virtue of Approximation~\ref{apx:day_ahead_water_level}, and that $\vartheta_{1,R} = 0$ by the properties of the dummy reservoir $R$.

To compute the market bids $\{ (s_t, u_t, v_t) \}_{t \in \mathcal{T}(1)}$ for day $d = 1$, we solve the following variant of the trader's problem,
\begin{equation}\label{eq:revised_reduced_trader_problem}
\begin{aligned}
    &\sup && \sum_{t \in \mathcal{T}(1)} \tilde{\pi}_t^{\rm{s}} s_t + \tilde{\pi}_t^{\rm u} u_t + \tilde{\pi}_t^{\rm v} v_t + \bm{\vartheta}_1^\top [ \bm\phi_t + \mathbf{M} ( \hat{\bm{g}}_t - \hat{\bm{p}}_t + \hat{\bm{z}}_t ) ] \\
    &\text{s.t.} && s_t, u_t, v_t \in \mathbb{R}, ~\bm{g}_t, \bm{p}_t, \bm{z}_t \in \mathbb{R}^A,~\hat{\bm{g}}_t, \hat{\bm{p}}_t, \hat{\bm{z}}_t \in \mathbb{R}^A 
    && \forall t \in \mathcal{T}(1) \\
    & && 0 \leq {u}_t, ~{0} \leq {v}_t, ~\bm{0} \leq \bm{g}_t,\hat{\bm{g}}_t \leq \overline{\bm{g}}_t, ~\bm{0} \leq \bm{p}_t,\hat{\bm{p}}_t \leq \overline{\bm{p}}_t, ~\bm{0} \leq \bm{z}_t,\hat{\bm{z}}_t
    && \forall t \in \mathcal{T}(1)\\
    & && s_t + \revision{\overline{\rho}^{\rm{u}}_t} u_t = \bm{\eta}_t^\top \bm{g}_t - \bm{\zeta}_t^\top \bm{p}_t,~s_t - \revision{\overline{\rho}^{\rm{v}}_t} v_t \geq -\bm\zeta_t^\top \overline{\bm{p}}_t 
    && \forall t \in \mathcal{T}(1)\\
    & && s_t + \revision{\hat{\kappa}^{\rm u}_t}\revision{\hat{\rho}_t^{\rm{u}}} u_t- \revision{\hat{\kappa}^{\rm v}_t}\revision{\hat{\rho}_t^{\rm{v}}} v_t = \bm{\eta}_t^\top \hat{\bm{g}}_t - \bm{\zeta}_t^\top \hat{\bm{p}}_t
    && \forall t \in \mathcal{T}(1)\\
    & && \underline{\bm{w}}_t \leq \bm{w}_{0} + \sum_{\tau=\downarrow({\color{black}1})}^t \bm{\phi}_\tau + \mathbf{M} ( \bm{g}_\tau - \bm{p}_\tau + \bm{z}_\tau ) \leq \overline{\bm{w}}_t
    && \forall t \in \mathcal{T}(1) \\
    & && \underline{\bm{w}}_t \leq \bm{w}_{0} + \sum_{\tau=\downarrow({\color{black}1})}^t \bm{\phi}_\tau + \mathbf{M} ( \hat{\bm{g}}_\tau - \hat{\bm{p}}_\tau + \hat{\bm{z}}_\tau ) \leq \overline{\bm{w}}_t
    && \forall t \in \mathcal{T}(1) \\
    & && \revision{\bm{w}_1 \leq \bm{w}_{0} + \sum_{\tau \in \mathcal{T}(1)} \bm{\phi}_\tau + \mathbf{M} ( \bm{g}_\tau - \bm{p}_\tau + \bm{z}_\tau ),}
\end{aligned}
\end{equation}
\revision{where $\hat{\kappa}_t^{\rm{u}} = \mathbb{E} \left[ \kappa_t^{\rm{u}} \right]$, $\hat{\kappa}_t^{\rm{v}} = \mathbb{E} \left[ \kappa_t^{\rm{v}} \right]$,
$\hat{\rho}_t^{\rm{u}} = \mathbb{E} \left[ \rho_t^{\rm{u}} \right]$ and $\hat{\rho}_t^{\rm{v}} = \mathbb{E} \left[\rho_t^{\rm{v}} \right]$ for all $t \in \mathcal{T} (1)$.} Problem~\eqref{eq:revised_reduced_trader_problem} differs from the day-1 reduced trader's problem~\eqref{opt:reduced_collective_trading} of Section~\ref{sec:reduction_collective} in two aspects. Firstly, the operational decisions $\{ (\bm{g}_t, \bm{p}_t, \bm{z}_t) \}_{t \in \mathcal{T}(1)}$ guaranteeing the satisfaction of the reservoir bounds under any possible reserve activation sequence are complemented with the \emph{nominal} operational decisions $\{ (\hat{\bm{g}}_t, \hat{\bm{p}}_t, \hat{\bm{z}}_t) \}_{t \in \mathcal{T}(1)}$ that account for the water dynamics under the \revision{expected scenario, where the products of the reserve allocations and activations are replaced with their means $\mathbb E[\kappa_t^{\rm{u}}\rho_t^{\rm{u}}]=\hat{\kappa}_t^{\rm{u}}\hat{\rho}_t^{\rm{u}}$ and $\mathbb E[\kappa_t^{\rm{v}}\rho_t^{\rm{v}}]=\hat{\kappa}_t^{\rm{v}}\hat{\rho}_t^{\rm{v}}$ for all $t \in \mathcal{T} (1)$.} Secondly, problem~\eqref{eq:revised_reduced_trader_problem} \revision{incorporates both the water target level constraints and a valuation of the nominal end-of-day reservoir levels in the objective function}. These changes alleviate the conservatism introduced by Approximation~\ref{apx:day_ahead_water_level}, \revision{but they imply that the equivalence of Theorem~\ref{thm:all_collective} no longer holds for the revised trader's problem~\eqref{eq:revised_reduced_trader_problem}.}


To obtain the operational decisions $(\bm{g}_\theta, \bm{p}_\theta, \bm{z}_\theta)$ for each hour $\theta \in \mathcal T(1)$ that are compatible with the market bids $(s_t, u_t, v_t)$ and feasible in view of the revealed reserve market activations, finally, we solve the truncated problems
\begin{equation*} 
\begin{aligned}
&\sup && \bm{\vartheta}_1^\top \mathbf{M} \left( {\bm{g}}_\theta - {\bm{p}}_\theta + {\bm{z}}_\theta + \sum_{t=\theta+1}^H (\hat{\bm{g}}_t - \hat{\bm{p}}_t + \hat{\bm{z}}_t) \right) \\
&\text{s.t.} && \bm{g}_t, \bm{p}_t, \bm{z}_t \in \mathbb{R}^A, ~\hat{\bm{g}}_t, \hat{\bm{p}}_t, \hat{\bm{z}_t} \in \mathbb{R}^A && \hspace{0mm} \forall t \in [\theta,H]\\
&	  && \bm{0} \leq \bm{g}_t,\hat{\bm{g}}_t \leq \overline{\bm{g}}_t, ~\bm{0} \leq \bm{p}_t,\hat{\bm{p}}_t \leq \overline{\bm{p}}_t, ~\bm{0} \leq \bm{z}_t,\hat{\bm{z}}_t && \hspace{0mm} \forall t \in [\theta,H]\\
&     && s_\theta + \revision{\kappa^{\rm u}_\theta}\rho^{\rm u}_\theta u_\theta - \revision{\kappa^{\rm v}_\theta}\rho^{\rm v}_\theta v_\theta = \bm{\eta}_\theta^\top \bm{g}_\theta - \bm{\zeta}_\theta^\top \bm{p}_\theta  \\  
&	  && s_t + \revision{\overline{\rho}^{\rm{u}}_t} u_t = \bm{\eta}_t^\top \bm{g}_t - \bm{\zeta}_t^\top \bm{p}_t,
&& \hspace{0mm} \forall t \in [\theta+1,H]\\
&     && s_t + \revision{\kappa^{\rm u}_t}\revision{\hat{\rho}_t^{\rm{u}}} u_t- \revision{\kappa^{\rm v}_t}\revision{\hat{\rho}_t^{\rm{v}}} v_t = \bm{\eta}_t^\top \hat{\bm{g}}_t - \bm{\zeta}_t^\top \hat{\bm{p}}_t && \hspace{0mm} \forall t \in [\theta+1,H]\\
&	  && \underline{\bm{w}}_t \leq \bm{w}_{0} + \sum_{\tau = 1}^t \bm{\phi}_\tau + \mathbf{M} ( \bm{g}_\tau - \bm{p}_\tau + \bm{z}_\tau ) \leq \overline{\bm{w}}_t && \hspace{0mm} \forall t \in [\theta,H] \\
&	  && \underline{\bm{w}}_t \leq \bm{w}_{0} + \sum_{\tau = 1}^t \bm{\phi}_\tau + \mathbf{M} ( \hat{\bm{g}}_\tau - \hat{\bm{p}}_\tau + \hat{\bm{z}}_\tau ) \leq \overline{\bm{w}}_t && \hspace{0mm} \forall t \in [\theta,H] \\
&     && \revision{\bm{w}_1 \leq \bm{w}_{0} + \sum_{\tau \in \mathcal{T}(1)} \bm{\phi}_\tau + \mathbf{M} ( \bm{g}_\tau - \bm{p}_\tau + \bm{z}_\tau ).}
\end{aligned}
\end{equation*}
Each of these problems differs from problem~\eqref{eq:revised_reduced_trader_problem} in two aspects. Firstly, the truncated problems disregard the impact of the (sunk) market bids $\{ (s_t, u_t, v_t) \}_{t \in \mathcal{T}(1)}$ on the objective function. \revision{Secondly, the nominal reserve allocations and activations $\hat{\kappa}_{t}^{\rm{u}}$, $\hat{\kappa}_{t}^{\rm{v}}$, $\hat{\rho}_{\theta}^{\rm{u}}$ and $\hat{\rho}_{\theta}^{\rm{v}}$ in hour $\theta$ are replaced with their actual realizations $\kappa_{t}^{\rm{u}}$, $\kappa_{t}^{\rm{v}}$, $\rho_{\theta}^{\rm{u}}$ and $\rho_{\theta}^{\rm{v}}$, respectively, for all $t \in [\theta+1,H]$.}

Apart from reducing the conservatism introduced by Approximation~\ref{apx:day_ahead_water_level}, our rolling horizon implementation enjoys several advantages: ($i$) The true state of the world~$\omega$ may differ from all discretization points $\omega^{(n)}$, $n=1,\ldots, N$, of the empirical measure~$\mathbb{\widehat P}$, and thus the sample average approximation does not provide any recourse decisions corresponding to~$\omega$. ($ii$) Problem~\eqref{opt:collective_bidding_model} only models a finite time window of the perpetual operation of the reservoir system, which adversely affects the decisions towards the end of the planning horizon~$\mathcal D$. ($iii$) Resolving problem~\eqref{opt:collective_bidding_model} on a daily basis ensures that the end-of-day reservoir targets adapt to all available information, and it reduces the conservatism of the affine decision rule approximation.


\section{Case Study: Gasteiner Tal Cascade}
\label{sec:case_study}

We apply our planner-trader decomposition to a hydropower cascade located in the Gasteiner Tal, Austria. We describe the problem instance in Section~\ref{subsec:parameters}, and we present our results in Section~\ref{subsec:results}. 

\subsection{Problem Parameters}
\label{subsec:parameters}

The Gasteiner Tal cascade comprises three reservoirs: the Bockhartsee annual reservoir with a capacity of 18,500,000 m$^3$, the Nassfeld daily reservoir with a capacity of 230,000 m$^3$ and the Remsach compensation reservoir with a capacity of 4,000 m$^3$. The Bockhartsee reservoir is connected to the Nassfeld reservoir by the Nassfeld pumped-storage plant that has a generating capacity of 40,600 m$^3$/h and a generation efficiency of 6.68 $\times$ $10^{-4}$ MWh/m$^3$, as well as a pumping capacity of 28,500 m$^3$/h and an inverse pumping efficiency of 9.35 $\times$ $10^{-4}$ MWh/m$^3$, respectively. The Nassfeld reservoir is connected to the Remsach reservoir by the B\"ockstein plant with a generating capacity of 41,400 m$^3$/h and an efficiency of 10.3 $\times$ $10^{-4}$ MWh/m$^3$, and the Remsach reservoir is connected to the Gasteiner Ache river by the Remsach  plant with a generating capacity of 50,400 m$^3$/h and an efficiency of 5.41 $\times$ $10^{-4}$ MWh/m$^3$. In total, the cascade produces \revision{up to} 264,000 MWh of electricity per year and covers the demand of approximately 75,000 households.\footnote{Further details of the cascade can be found online at \texttt{https://www.salzburg-ag.at/content/dam/web18/doku}- \texttt{mente/unternehmen/erzeugung/Kraftwerke-GasteinerTal.pdf}.}

\revision{
We model the hourly natural inflows into the three reservoirs based on historical daily inflow data for the years 2002-2012. In absence of more detailed data, we make the simplifying assumption that each hourly inflow within a day amounts to 1/24-th of the corresponding daily inflow. We use the inflows of the years 2002-2011 as in-sample scenarios and the inflows of the year 2012 as out-of-sample scenarios, respectively.

For the spot and reserve prices, we use the Balancing Statistics dataset for the year 2020 that is publicly available on the website of the Austrian Power Grid AG.\footnote{Website: \url{www.apg.at}.} We use the hourly averages of the EXAA and EPEX spot prices as the out-of-sample spot prices, whereas our in-sample spot prices are generated from a linear regression model of the out-of-sample prices with indicator variables for the month of the year, the day of the week and the hour of the day. \revision{In contrast, the capacity fees and the activation prices on the secondary reserve markets are reported in four-hourly and 15-minute blocks, respectively. We use the} four-hourly capacity fees for the secondary reserve-up and reserve-down market as the out-of-sample capacity fees, while our in-sample capacity fees are derived from a logarithmic regression model of the corresponding realized fees with indicator variables for the month of the year, the day of the week and the four-hour block of the day. \revision{Likewise, we use the realized activation prices on the reserve markets at 15-minute granularity as out-of-sample data, whereas} the in-sample prices are generated from a linear regression model of the out-of-sample prices with indicator variables for the month of the year, the day of the week and the hour of the day. While our regression models appear to generally perform reasonably well, we note that the capacity fees appear to be challenging to predict to a high accuracy. We also note that despite the availability of further historical prices, we decided to use the year 2020 as the basis for both the in-sample and the out-of-sample data as the energy prices exhibited significant non-stationarity in recent years due to the changing political climate. Note, however, that the in-sample prices differ from the out-of-sample prices due to our use of the aforementioned regression models.}

\subsection{Results}
\label{subsec:results}

\revision{We now apply our decomposition scheme to the case study of Section~\ref{subsec:parameters}. Our experiments comprise a planning horizon of one year ($52$ weeks $\times$ $7$ days, resulting in $D = 364$ days). To alleviate end-of-horizon effects, we solve our master problem as a rolling-horizon problem where the in-sample and out-of-sample data is re-used in a circular fashion, that is, we re-use the data of year 2020 \emph{in lieu} of the data for the subsequent year 2021. To account for the idiosyncracies of  the energy market of our case study, we also incorporate constraints into our planner and trader problems that require the hourly capacity fee and activation price bids to be constant across 4-hour blocks. Our planner and trader problems are controlled by two hyper-parameters that specify the target probability with which our capacity bids are being accepted as well as the target frequency with which our energy price bids are being activated, respectively. The actual capacity bids and energy prices vary across the four-hour blocks, and they are constructed from the hyper-parameters using our regression models for the capacity fees and activation prices, respectively. Below, we present results for the hyper-parameter combinations $\{ 0.0, 0.25, 0.5, 0.75, 1.0 \} \times \{ 0.0, 0.25, 0.5, 0.75, 1.0 \}$. Our decomposition scheme was implemented in C++ using Gurobi 9.0.1 and run on an 3.2 GHz Intel Xeon processor in single-core mode. All planner problems were solved within less than two minutes, and the trader problems were all solved within fractions of a second. For the sake of exposition, all water levels and revenues are denominated in 1,000s of m$^3$ and 1,000s of EUR, respectively. Our datasets and detailed results, together with the source codes of all algorithms, can be found online.\footnote{Website: \url{https://github.com/napat-rujeerapaiboon/hydro-scheduling}.}

\begin{table}[tb]
    \centering
    \begin{tabular}{r|c|c|c|c|c|}
        \multicolumn{1}{c}{} & \multicolumn{1}{c}{\textbf{0.0}} & \multicolumn{1}{c}{\textbf{0.25}} & \multicolumn{1}{c}{\textbf{0.5}} & \multicolumn{1}{c}{\textbf{0.75}} & \multicolumn{1}{c}{\textbf{1.0}} \\ \cline{2-6}
        \textbf{capacity} & (28.7\%, 26.9\%) & (39.4\%, 32.8\%) & (42.4\%, 38.7\%) & (67.6\%, 46.7\%) & (70.7\%, 47.1\%) \\ \cline{2-6}
        \textbf{energy} & (0.9\%, 1.6\%) & (24.4\%, 26.8\%) & (50.9\%, 51.8\%) & (78.0\%, 77.9\%) & (99.3\%, 99.1\%) \\ \cline{2-6}
    \end{tabular}
    \caption{\revision{Rates at which capacity bids (first row) and energy price bids (second row) are accepted for different target probabilities (columns) on the reserve-up (first entry in each cell) and the reserve-down (second entry in each cell) markets. \label{tab:hyper_params:accuracy}}}
\end{table}

\begin{table}[tb]
	$\mspace{-50mu}$
    \footnotesize
    \begin{tabular}{r|ccccc|}
        \multicolumn{1}{c}{} & \textbf{0.0} & \textbf{0.25} & \textbf{0.5} & \textbf{0.75} & \multicolumn{1}{c}{\textbf{1.0}} \\ \cline{2-6}
        \textbf{0.0} & \cellcolor[gray]{0.95} 10,755.29 & \cellcolor[gray]{0.95} 10,761.46 & \cellcolor[gray]{0.95} 10,769.91 & \cellcolor[gray]{0.95} 10,771.26 & \cellcolor[gray]{0.95} 10,767.04 \\
        & \cellcolor[gray]{0.95} (100.0\%, 0.0\%, 0.0\%) & \cellcolor[gray]{0.95} (99.9\%, 0.1\%, 0.0\%) & \cellcolor[gray]{0.95} (99.7\%, 0.2\%, 0.0\%) & \cellcolor[gray]{0.95} (99.8\%, 0.2\%, 0.0\%) & \cellcolor[gray]{0.95} (99.8\%, 0.2\%, 0.0\%) \\ \cline{2-6}
        \textbf{0.25} & \cellcolor[gray]{0.8} 11,660.82 & \cellcolor[gray]{0.9} 11,065.15 & \cellcolor[gray]{0.82} 11,507.15 & \cellcolor[gray]{0.87} 11,256.31 & \cellcolor[gray]{0.94} 10,842.81 \\
        & \cellcolor[gray]{0.8} (99.3\%, 0.0\%, 0.6\%) & \cellcolor[gray]{0.9} (94.6\%, 1.5\%, 3.9\%) & \cellcolor[gray]{0.82} (94.0\%, 3.1\%, 2.9\%) & \cellcolor[gray]{0.87} (95.2\%, 4.0\%, 0.8\%) & \cellcolor[gray]{0.94} (103.3\%, 2.0\%, -5.2\%) \\ \cline{2-6}
        \textbf{0.5} & \cellcolor[gray]{1.0} 10,479.21 & \cellcolor[gray]{0.96} 10,722.82 & \cellcolor[gray]{0.77} 11,800.98 & \cellcolor[gray]{0.84} 11,424.93 & \cellcolor[gray]{0.99} 10,539.76 \\
        & \cellcolor[gray]{1.0} (97.7\%, 0.4\%, 1.8\%) & \cellcolor[gray]{0.96} (95.2\%, 0.0\%, 4.8\%) & \cellcolor[gray]{0.77} (87.4\%, 9.4\%, 3.2\%) & \cellcolor[gray]{0.84} (89.8\%, 10.6\%, -0.4\%) & \cellcolor[gray]{0.99} (95.5\%, 10.7\%, -6.2\%) \\ \cline{2-6}
        \textbf{0.75} & \cellcolor[gray]{0.92} 10,932.94 & \cellcolor[gray]{0.7} 12,237.71 & \cellcolor[gray]{0.6} 12,836.11 & \cellcolor[gray]{0.62} 12,700.91 & \cellcolor[gray]{0.9} 11,077.83 \\
        & \cellcolor[gray]{0.92} (96.9\%, 0.4\%, 2.7\%) & \cellcolor[gray]{0.7} (83.1\%, 11.9\%, 5.0\%) & \cellcolor[gray]{0.6} (74.5\%, 22.6\%, 2.9\%) & \cellcolor[gray]{0.62} (63.9\%, 38.0\%, -1.9\%) & \cellcolor[gray]{0.9} (67.2\%, 41.9\%, -9.1\%) \\ \cline{2-6}
        \textbf{1.0} & \cellcolor[gray]{0.62} 12,717.43 & \cellcolor[gray]{0.72} 12,130.75 & \cellcolor[gray]{0.5} 13,396.20 & \cellcolor[gray]{0.52} 13,268.93 & \cellcolor[gray]{0.54} 13,141.65 \\
        & \cellcolor[gray]{0.62} (97.6\%, 0.5\%, 2.0\%) & \cellcolor[gray]{0.72} (79.2\%, 16.0\%, 4.7\%) & \cellcolor[gray]{0.5} (53.0\%, 44.8\%, 2.2\%) & \cellcolor[gray]{0.52} (93.9\%, 6.5\%, -0.4\%) & \cellcolor[gray]{0.54} (67.2\%, 40.8\%, -8.0\%) \\ \cline{2-6}
    \end{tabular}
    \caption{\revision{Revenues (top entry) and revenue contributions by market (bottom entry; from left to right: spot, reserve-up and reserve-down market) for different combinations of target probabilities for the capacity bids (rows) and energy price bids (columns). \label{tab:hyper_params:revenues}}}
\end{table}

We first consider the choice of the two hyper-parameters. To this end, Table~\ref{tab:hyper_params:accuracy} compares the target acceptance probabilities for our capacity bids with the actual acceptance frequencies (first row) as well as the target activation frequencies for our energy price bids with the actual activation frequencies (second row) on the reserve-up (first entry) and reserve-down (second entry) markets. The table shows that while our predictions of the energy price activation frequencies are remarkably accurate, the actual capacity bid acceptance frequencies can deviate quite significantly from the target acceptance probabilities. We attribute this to the relatively poor fit of the underlying vanilla regression models described in the previous section. Table~\ref{tab:hyper_params:revenues} compares the out-of-sample revenues generated by different combinations of both hyper-parameters over the course of the year, and it also elucidates how the spot and the two reserve markets contribute towards those revenues. The table reveals that overall, targeting a 100\% probability of having the capacity bids accepted (fifth row) and targeting for a 50\% frequency of having the energy price bids accepted (third row), respectively, performs best. Note that while almost half of the revenues generated under this hyper-parameter combination stem from the reserve-up market, the reserve-down market appears to contribute only a negligible fraction to the overall revenues. This turns out to be misleading, however, as successful bids on the reserve-down market lead to water being pumped to higher reservoirs, which implies further revenue uplifts at later times that we do not account for in this calculation. This is further evidenced by separate numerical experiments (not reported here) which reveal that the inclusion of either reserve market to a spot-only model leads to a revenue uplift of 20\%-25\%, whereas the additional inclusion of the other reserve market only provides a further uplift of 0.1\%-5\%, independent of the order in which the two reserve markets are added.

\begin{figure}[tb]
	$\mspace{-30mu}$
	\begin{tabular}{c@{}c@{}c}
		\includegraphics[scale=0.4]{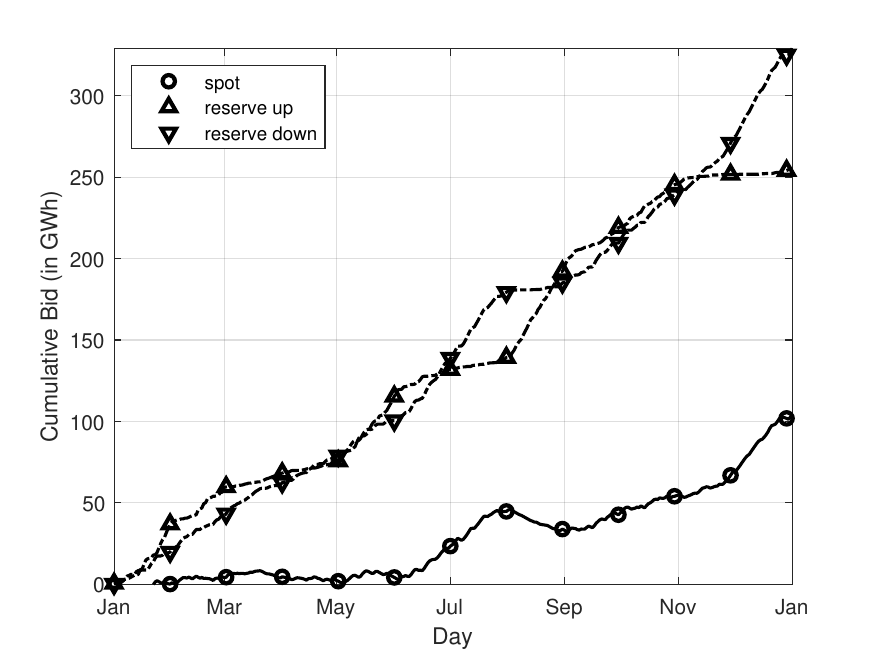} &
		\includegraphics[scale=0.4]{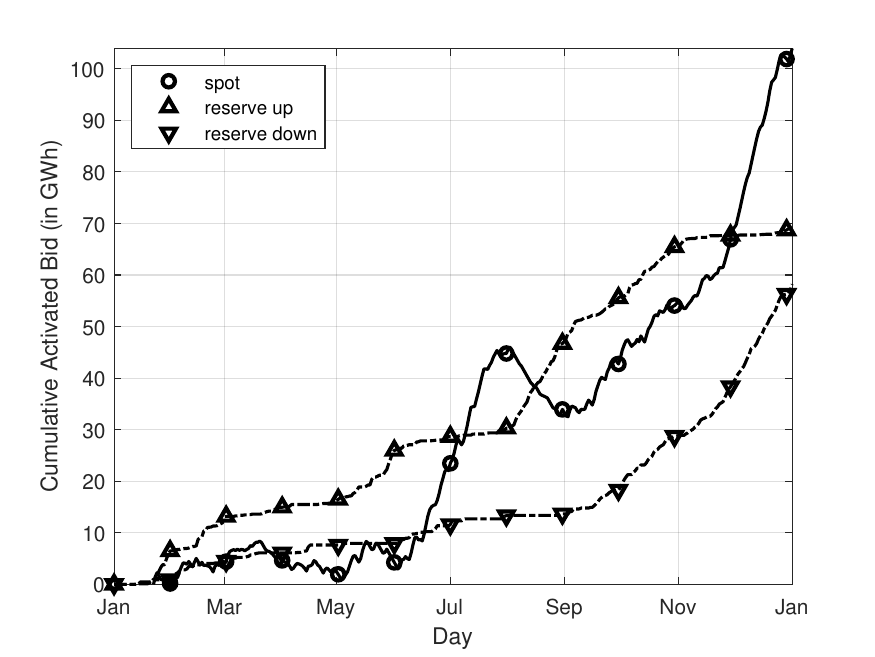} &
		\includegraphics[scale=0.4]{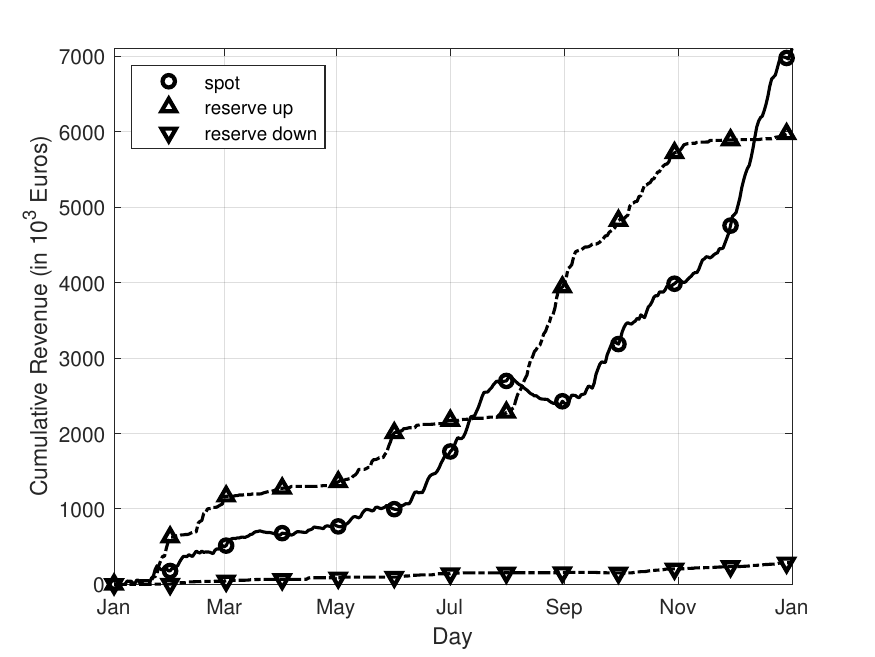}
	\end{tabular}
	\caption{\revision{Cumulative bids and revenue contributions of the spot and the reserve markets. From left to right, the graphs show the cumulative bids, the activated bids and the generated revenues on each of the three markets. \label{fig:revenue_drilldown}}}
\end{figure}

\begin{figure}[tb]
	$\mspace{-30mu}$
	\begin{tabular}{c@{}c@{}c}
		\includegraphics[scale=0.4]{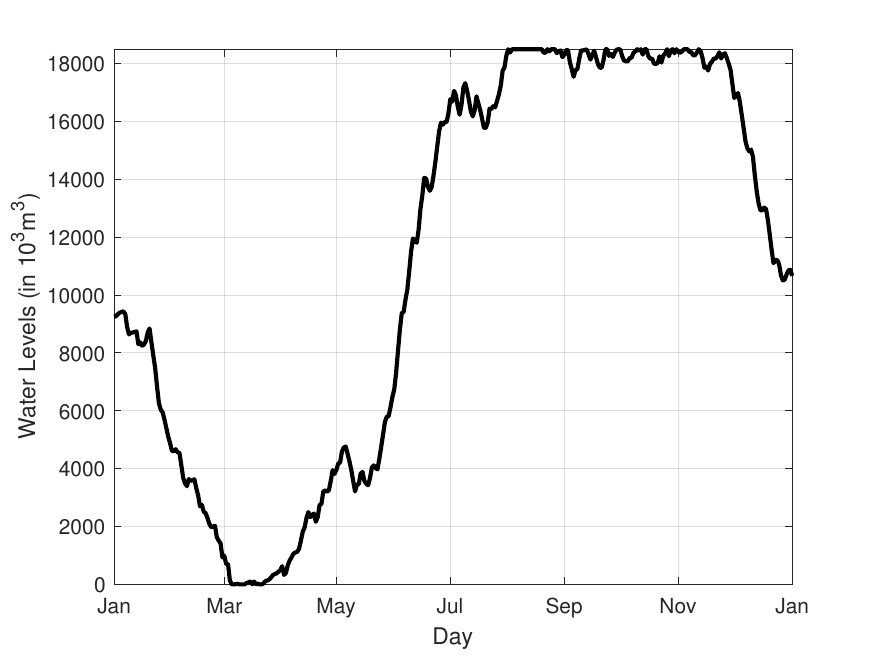} &
		\includegraphics[scale=0.4]{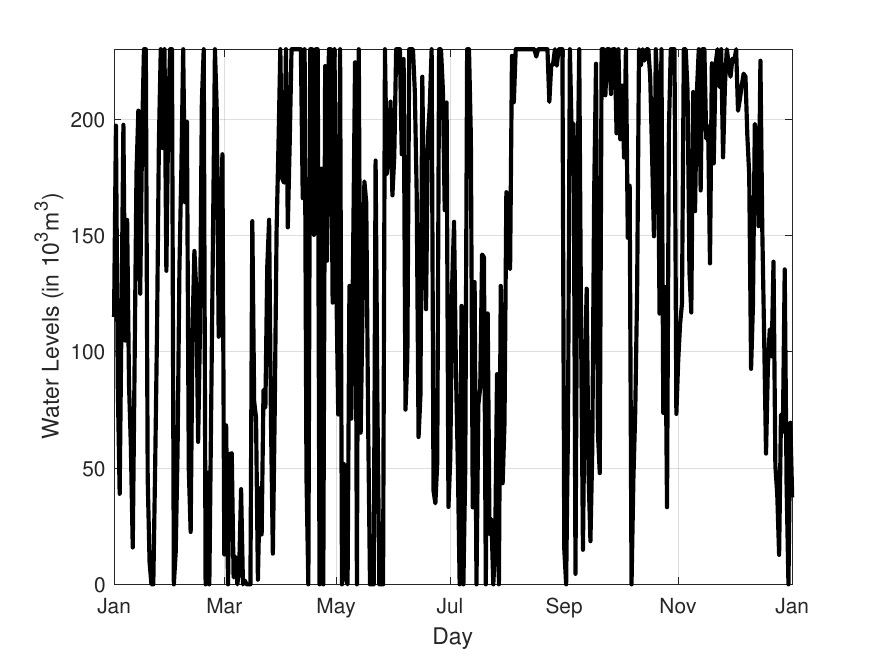} &
		\includegraphics[scale=0.4]{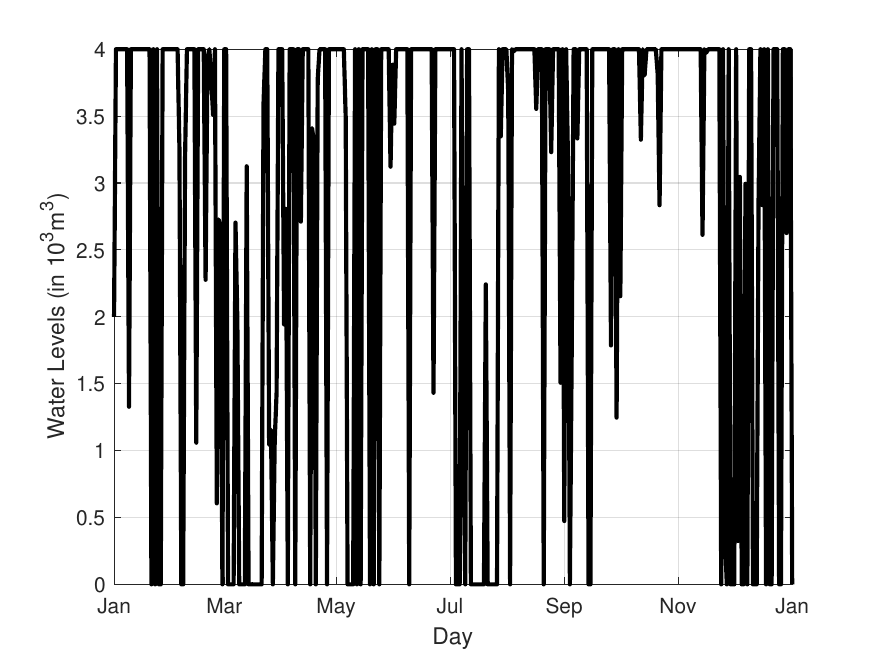}
	\end{tabular}
	\caption{\revision{Out-of-sample reservoir levels for the Bockhartsee (left), Nassfeld (middle) and Remsach (right) reservoirs. \label{fig:detailed_water_levels}}}
\end{figure}

\begin{figure}[tb]
    \centering
    \includegraphics[scale=0.65]{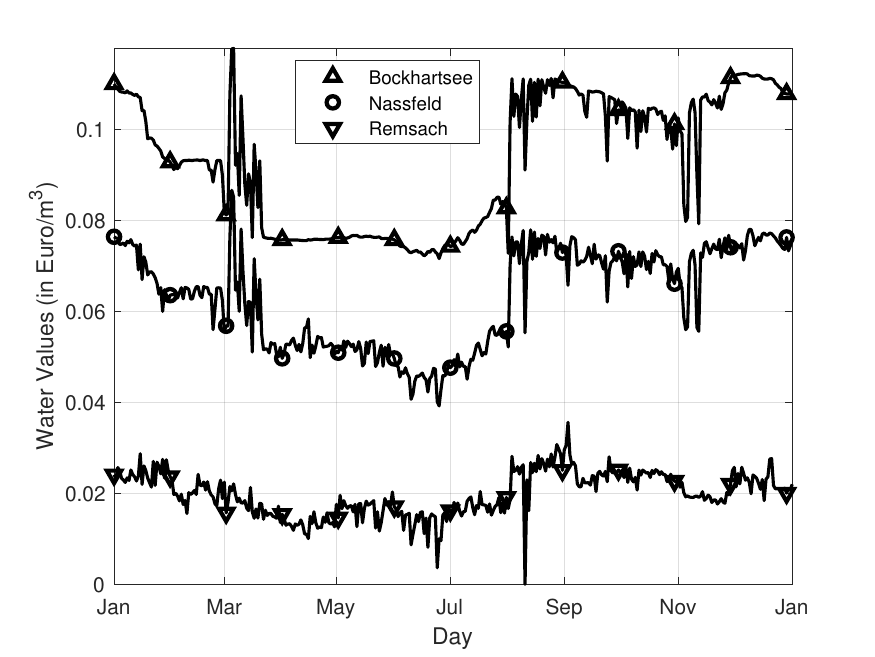}
    \caption{\revision{Water values for the Bockhartsee (left), Nassfeld (middle) and Remsach (right) reservoirs. \label{fig:detailed_water_values}}}
\end{figure}

\begin{figure}[tb]
	\begin{center}
	    \begin{tabular}{c@{}c}
		    \includegraphics[scale=0.5]{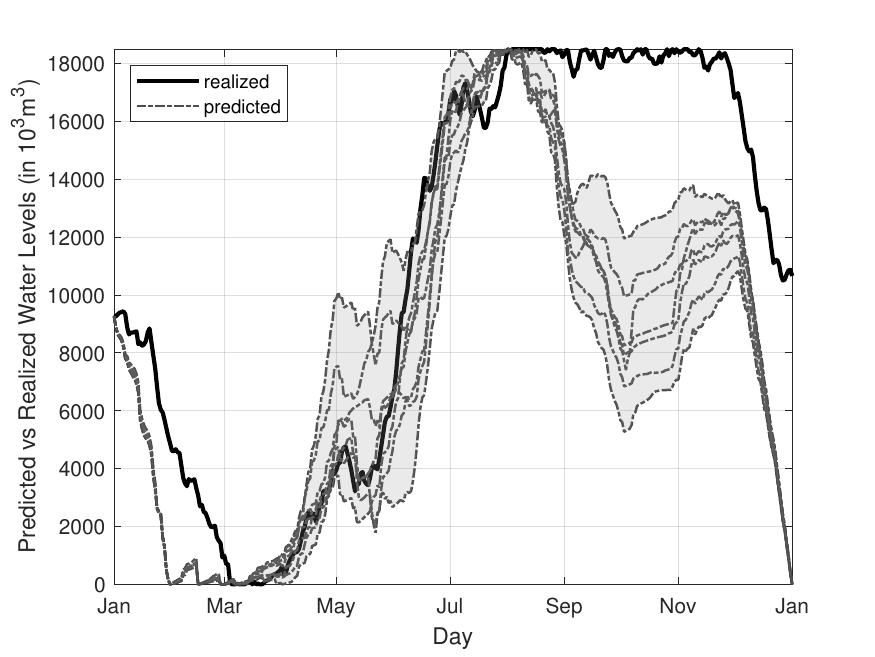} &
		    \includegraphics[scale=0.5]{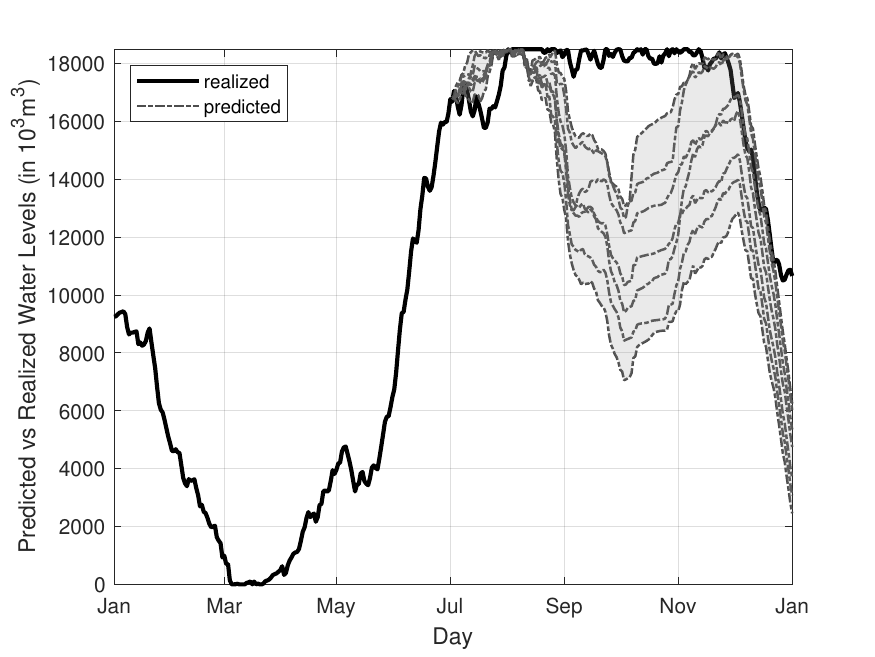}
	    \end{tabular}
	\end{center}
	\caption{\revision{Predictive accuracy of the planner's problem. The realized out-of-sample water levels of the Bockhartsee reservoir are printed in bold, whereas the shaded regions illustrate the in-sample water level predictions on day 1 (left graph) and on day 182 (right graph). \label{fig:predictive_accuracy}}}
\end{figure}

We now take a closer look at the best-performing strategy from Table~\ref{tab:hyper_params:revenues}. Figure~\ref{fig:revenue_drilldown} visualizes the cumulative contributions of the spot and reserve markets towards the energy bids (left), the energy provided (middle) and the revenues generated (right). The figure shows that while the spot and the reserve-up market both generate about half of the overall revenues, the amounts of energy exchanged on all three markets are of the same order of magnitude. Figures~\ref{fig:detailed_water_levels} and~\ref{fig:detailed_water_values} present the water levels and water values for the three reservoirs of our case study, respectively. The figures show that our model correctly captures the different time scales of the reservoir dynamics. While the filling level of the Bockhartsee annual reservoir changes slowly throughout the year, the Nassfeld and Remsach reservoirs operate at a much faster pace. The seasonality of the water levels also implies a seasonality in the water values, which are highest ahead of the depletion phase of the top-level reservoir. Note that in our experiments, all lower reservoir bounds are set to zero. In practice, one typically imposes strictly positive lower reservoir bounds to protect the aquatic life as well as to avoid a negative impact on tourism. Figure~\ref{fig:predictive_accuracy} investigates the predictive accuracy of our bidding model. It visualizes the out-of-sample water levels of the Bockhartsee reservoir across the time horizon, together with the in-sample prediction of these water levels on day $1$ and day $182$. We observe that the out-of-sample water levels tend to stay close the predicted intervals for a while, but that our predictions become increasingly inaccurate as we look further into the future. We emphasize that the water level predictions are updated every day, rather than twice during the time horizon as shown in this figure.

\begin{figure}[tb]
    \centering
    \includegraphics[scale=0.65]{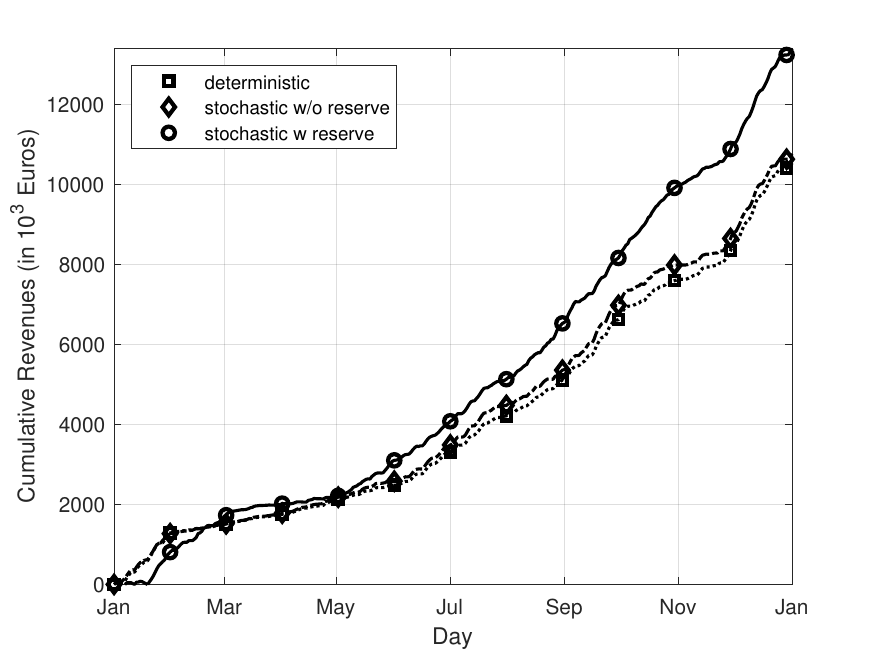}
    \caption{\revision{Cumulative revenues generated by the deterministic model, the stochastic model without reserve markets and the stochastic model including reserve markets. \label{fig:comparison_to_competitors}}}
\end{figure}

In our final experiment, we compare our bidding problem with a variant of the problem that precludes participation on the reserve markets. We also compare our model against a deterministic bidding formulation that replaces all stochastic parameters with their expected values. In particular, this formulation replaces the stochastic reserve market activations with deterministic fractional activations, which implies that the hydropower producer may not be able to honor her reserve market commitments out-of-sample. In such cases, we revert to a spot-only trading strategy for the affected days. We observe empirically that the cumulative out-of-sample revenues of the deterministic formulation are maximized when the total reserve market participation is restricted to not exceed the spot market participation in any given hour. This ad hoc restriction avoids overly aggressive reserve market participation that underperforms out-of-sample due to frequent switches to the aforementioned spot-only strategy. Note that by retrospectively switching to a spot-only strategy based on the observed out-of-sample reserve activations, the deterministic strategy has an advantage and is not implementable in reality. Figure~\ref{fig:comparison_to_competitors} visualizes the cumulative out-of-sample revenues generated by the three models. The figure highlights the benefits of a stochastic strategy that participates in both the spot and the reserve markets: our strategy outperforms both the spot-only stochastic model and the deterministic model \mbox{by about 25\% in terms of cumulative revenues.}}

\section*{Acknowledgments}

This publication is posthumously dedicated to the first author, Kilian Schindler (29.10.1989– 16.5.2020), who has led the research in collaboration with the second author. We are grateful to Georg Ostermaier from Decision Trees GmbH and Dirk Lauinger for valuable discussions and advice, as well as the associate editor and the referees for their suggestions that significantly improved the manuscript. This research was supported by the Ministry of Education, Singapore, under its 2019 Academic Research Fund Tier 3 grant MOE-2019-T3-1-010 and the Swiss National Science Foundation grant BSCGI0\_157733 as well as the EPSRC grant EP/R045518/1.

\bibliography{bibliography}
\bibliographystyle{ormsv080}

\clearpage

\begin{APPENDIX}{}
\section{Serial Independence of Reserve Activations}
\label{sec:independence}
\revision{
Throughout the paper we assume that the uncertain reserve activations~$\rho_t^{\rm{u}}$ and~$\rho_t^{\rm{v}}$ are serially independent across different hours~$t\in\mathcal T$. To further investigate this assumption, recall that secondary reserve is dispatched by the grid operator in five-second intervals. It is therefore expedient to model the reserve activations as hourly averages of more fine-grained reserve activation processes~$\rho^{\rm u}(s)$ and~$\rho^{\rm v}(s)$, where~$s\in\mathcal S$ indexes all five-second intervals within the planning horizon. Formally, we thus set
\begin{equation}
    \label{eq:averaged-rhos}
    \rho_t^{\rm u} = \frac{1}{\vert\mathcal{S}(t)\vert}\sum_{s \in \mathcal{S}(t)} \rho^{\rm u}(s) 
    \quad \text{and} \quad 
    \rho_t^{\rm v} = \frac{1}{\vert\mathcal{S}(t)\vert}\sum_{s \in \mathcal{S}(t)} \rho^{\rm v}(s) \quad \forall t \in \mathcal{T},
\end{equation}
where~$\mathcal S(t)$ denotes the set of all 720 five-second intervals in hour~$t$. In any given interval~$s\in\mathcal S$, there cannot be a simultaneous call-off on the reserve-up and reserve-down markets. This prompts us to model~$\rho^{\rm u}(s)$ and~$\rho^{\rm v}(s)$ as Bernoulli random variables, which evaluate to~0 if there is no call-off and to~1 if there is a call-off on the reserve-up or the reserve-down market, respectively, and to assume that~$\rho^{\rm u}(s)$ and~$\rho^{\rm v}(s)$ cannot evaluate to~1 at the same time. More precisely, we model the stochastic process~$\{(\rho^{\rm u}(s),\rho^{\rm u}(s))\}_{s\in\mathcal S}$ as a time-homogeneous Markov chain with three states: no call-off (state 1: $\rho^{\rm u}(s) = \rho^{\rm v}(s) = 0$), a call-off on the reserve-up market (state 2: $\rho^{\rm u}(s) = 1$ and $\rho^{\rm v}(s) = 0$), or a call-off on the reserve-down market (state 3: $\rho^{\rm u}(s) = 0$ and $\rho^{\rm v}(s) = 1$).
If we assume that the system cannot transition from a reserve-up call-off to a reserve-down call-off (or vice versa) in just five seconds and that call-offs on the reserve-up and the reserve-down markets are equally likely, then the transition probability matrix of the Markov chain can be represented as 
%
%
\begin{equation*}
    \mathbf{P} = \left[ \begin{array}{ccc}
         1-2p & p & p  \\
         q & 1-q & 0 \\
         q & 0 & 1-q
    \end{array}  \right],
\end{equation*}
where the rows and columns refer to the three states in order of appearance. Here, $p \in (0,\frac{1}{2})$ denotes the probability of a call-off on the reserve-up or the reserve-down market in the next \revision{five seconds} if there is no call-off at present. Similarly, $q \in (0,1)$ denotes the probability that an ongoing call-off on either market terminates over the next five seconds.
\revision{The imposed time homogeneity makes the simplifying assumption that the probability of entering a new reserve activation (\emph{i.e.}, the probability $p$ leaving state 1) as well as the probability of terminating a reserve activation (\emph{i.e.}, the probability $q$ of entering state 1) are independent of the recent history of reserve activations. Using this Markov chain, we will investigate below to which degree the averaged hourly reserve activations~$\rho_t^{\rm{u}}$ and $\rho_t^{\rm{v}}$ can be considered serially independent.} To estimate~$p$ and~$q$, we first note that the unique invariant distribution of the Markov chain encoded by~$\mathbf{P}$ is given by $\bm\pi = \frac{1}{q+2p}(q,p,p)$. Thus, the long-run fraction of time spent in reserve-up call-offs (or equivalently, reserve-down call-offs) is given by~$\varrho = \frac{p}{q+2p}$. In addition, an elementary calculation reveals that~$1/q$ coincides with the expected duration~$h$ of a call-off. Both~$\varrho$ and~$h$ \revision{can be} estimated from data, and they uniquely determine~$p$ and~$q$. In the following we assume that the initial state distribution is given by~$\bm\pi$. Thus, the Markov chain is in the stationary regime, and~$\mathbb E[\rho^{\rm u}(s)]=\mathbb E[\rho^{\rm v}(s)] = \varrho$ for all~$s\in\mathcal S$, which in turn implies via~\eqref{eq:averaged-rhos} that~$\hat\rho^{\rm u}_t =\mathbb E[\rho^{\rm u}_t] = \varrho$ and~$\hat\rho^{\rm v}_t =\mathbb E[\rho^{\rm v}_t] = \varrho$ for all~$t\in\mathcal T$. In this case, the dependence between the states of the Markov chain at time~$1$ and at time~$s$ can be measured by the uncertainty coefficient~$U(s)=I((\rho^{\rm u}(s),\rho^{\rm u}(s));(\rho^{\rm u}(1),\rho^{\rm u}(1)))/H((\rho^{\rm u}(s),\rho^{\rm u}(s)))$, where~$I(\cdot;\cdot)$ denotes mutual information, and~$H(\cdot)$ denotes entropy. The uncertainty coefficient ranges over~$[0,1]$ and characterizes the fraction of the bits of~$(\rho^{\rm u}(s),\rho^{\rm u}(s))$ that can be predicted from~$(\rho^{\rm u}(1),\rho^{\rm u}(1))$.
The smaller~$U(s)$, the more {\em in}dependent are the reserve activations at times~1 and~$s$; see \emph{e.g.} \citet[\S 14.7.4]{Press07}. Figure~\ref{fig:inddependence-of-activations} shows that~$U(s)$ decays rapidly with~$s$ for~$\varrho=1\%$ and for all values of~$h\in\{1\,\text{min}, 5\,\text{min}, 10\,\text{min}, 15\,\text{min}, 20\,\text{min} \}$, which implies that the reserve activations become increasingly independent as the lag~$s$ grows. In particular, for lags exceeding one hour ($s \geq 720$), the uncertainty coefficient~$U(s)$ virtually vanishes for all considered values of~$h$. Note that $15$ minutes is a natural upper bound on~$h$ \mbox{as longer-lasting imbalances trigger an activation of tertiary reserves. }


\begin{figure}[tb]
\begin{center}
    \includegraphics[width=0.55\paperwidth]{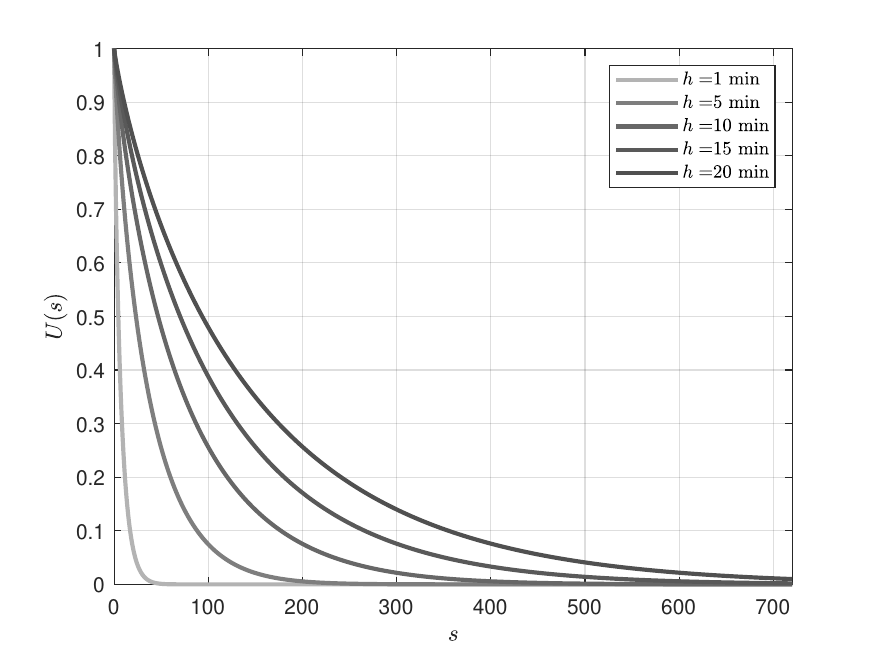}
\end{center}
	\caption{\revision{The uncertainty coefficient $U(s)$ as a function of time~$s$ measured in five-second intervals for different expected call durations $h$ when the frequency of reserve-up call-offs equals $\varrho = 1\%$. 
	}}
	\label{fig:inddependence-of-activations}
\end{figure}

Table~\ref{tab:mutual-information} reports the uncertainty coefficient between the hourly reserve activations~$(\rho^{\rm u}_1, \rho^{\rm v}_1)$ and $(\rho^{\rm u}_{t}, \rho^{\rm v}_{t})$ constructed as in~\eqref{eq:averaged-rhos} for~$t \in\{2,3,4\}$. Here, the uncertainty coefficient is computed via simulation using $10^6$~sample trajectories. Hence, consecutive hourly reserve activations are indeed nearly independent.


\begin{table}[tb]
\begin{center}
\revision{
\begin{tabular}{c|ccccc}
    & $h = 1$ min & $h = 5$ mins & $h = 10$ mins & $h = 15$ mins & $h = 20$ mins \\ \hline 
    $t = 2$ & $0.01\%$ & $0.18\%$  & $1.26\%$  & $2.99\%$ & $5.24\%$ \\
    $t = 3$ & $7 \times 10^{-3}\%$ & $0.03\%$ & $0.06\%$  & $0.08\%$ & $0.11\%$ \\
    $t = 4$ & $7 \times 10^{-3}\%$ & $0.02\%$ & $0.06\%$ & $0.07\%$ & $0.08\%$ \\ \hline \hline
\end{tabular}}
\end{center}
	\caption{\revision{The uncertainty coefficient between $(\rho^{\rm u}_1, \rho^{\rm v}_1)$ and $(\rho^{\rm u}_{t}, \rho^{\rm v}_{t})$ for different hourly time lags $t$ and expected call durations $h$ when the frequency of reserve-up call-offs equals $\varrho = 1\%$.}}
	\label{tab:mutual-information}
\end{table}}


\newpage

\newpage

\section{\revision{Proofs and Auxiliary Technical Results}}

\begin{proof}{Proof of Proposition~\ref{prop:collective_inequality_equivalence}}
As problem~\eqref{opt:collective_bidding_model_dailywater2} is a relaxation of problem~\eqref{opt:collective_bidding_model_dailywater}, it suffices to show that any feasible solution of problem~\eqref{opt:collective_bidding_model_dailywater2} corresponds to a feasible solution of problem~\eqref{opt:collective_bidding_model_dailywater} that attains the same objective function value. \revision{To this end, select any $\{ ( {s}_t, {u}_t, {v}_t, \bm{g}_t, \bm{p}_t, \bm{z}_t, \bm{w}_d)\}_{t,d}$ feasible in~\eqref{opt:collective_bidding_model_dailywater2}. As spillage is neither restricted nor penalized and as the reservoir topology represents a directed acyclic graph, any amount of water equal to the slack of the reformulated reservoir balance constraints in~\eqref{opt:collective_bidding_model_dailywater2} can be spilled through to node~$R$ sequentially for all reservoirs in the partial order induced by the topology graph and sequentially for all hours in the planning horizon. Thus, one can systematically replace the spillage decision~$\bm{z}_t$ with another $\mathcal{F}_{[t]}$-measurable random vector~$\bm{z}'_t$ for each~$t\in\mathcal T$ such that~$\{ ( {s}_t, {u}_t, {v}_t, \bm{g}_t, \bm{p}_t, \bm{z}'_t, \bm{w}_d)\}_{t,d}$ is feasible in problem~\eqref{opt:collective_bidding_model_dailywater}. 
The claim now follows because the spillage decisions do not enter the objective function.
}
\qed
\end{proof}

~\\[-10mm]

\revision{The proof of Proposition~\ref{prop:collective_bilevel_separation} relies on an auxiliary result which we state and prove first. To this end, consider} a generic multistage stochastic program of the form
\begin{equation}
\label{eq:msp}
\begin{aligned}
    &\sup && \mathbb{E} \left[ f(\bm{x}_0, \hdots, \bm{x}_T, \bm\xi_{[T]}) \right] \\
    &\mspace{3mu} \text{s.t.} &&  \bm{x}_0 \in \mathcal{L}^{n_0}(\mathcal{F}_0),~ \bm{x}_t \in \mathcal{L}^{n_t}(\mathcal{F}_{[t]}) && \forall t = 1, \hdots, T, 
\end{aligned}
\end{equation}
where the expectation is taken with respect to $\mathbb P$, the joint distribution of the random vectors $\bm \xi_t\in \mathbb{R}^{k_t}$, $t=0,\ldots,T$. Here, $f$ constitutes a normal integrand in the sense of \citet[Definition~14.27]{ref:Rockafellar-10}, that is, its epigraph $\text{epi}(f(\cdot,\bm\xi_{[T]}))$ is a closed-valued measurable multifunction of $\bm\xi_{[T]}$. 
We assume that $f$ may adopt the value $-\infty$ and thereby encode $\mathbb P$-almost sure constraints on the decision variables. We also retain the notational conventions for combined random vectors and $\sigma$-algebras introduced in Section~\ref{sec:models}. \revision{The following lemma relies on Theorem~14.60 by \citet{ref:Rockafellar-10} and the properties of conditional expectations.}

\begin{lemma}
\label{lem:maxexp2expmax}
The optimal value of the multistage stochastic program~\eqref{eq:msp} equals
\begin{equation}
\label{eq:msp-conditional}
\mathbb{E}
\left[
\begin{aligned}
    &\sup && \mathbb{E} \left[ f(\bm{x}_0, \hdots, \bm{x}_T, \bm\xi_{[T]}) \,\big\vert\, \bm\xi_0 \right] \\
    &\mspace{2mu} \emph{s.t.}&& \bm{x}_0 \in \mathbb{R}^{n_0},~ \bm{x}_t \in \mathcal{L}^{n_t}(\mathcal{F}_{[1,t]}) && \forall t = 1, \hdots, T 
\end{aligned}
\right].
\end{equation}
\end{lemma}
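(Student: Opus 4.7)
The strategy is to combine the tower property of conditional expectations with the Rockafellar--Wets interchangeability principle for decomposable function spaces and normal integrands (Theorem~14.60 of Rockafellar and Wets, 2010). I would establish the claim by proving the two inequalities between the optimal values of~\eqref{eq:msp} and~\eqref{eq:msp-conditional} separately.

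For the direction ``$\leq$'', fix any feasible policy $(\bm{x}_0, \ldots, \bm{x}_T)$ of~\eqref{eq:msp}, and let $V(\bm\xi_0)$ denote the value of the inner supremum on the right-hand side of~\eqref{eq:msp-conditional}. Conditioning on $\bm\xi_0$ renders $\bm{x}_0$ a deterministic element of $\mathbb{R}^{n_0}$ and, since $\mathcal{F}_{[t]} = \sigma(\mathcal{F}_0 \cup \mathcal{F}_{[1,t]})$, each $\bm{x}_t$ with $t \geq 1$ becomes (for almost every frozen $\bm\xi_0$) an $\mathcal{F}_{[1,t]}$-measurable function of $\bm\xi_{[1,t]}$. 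The restricted policy is thus feasible in the inner problem of~\eqref{eq:msp-conditional}, so $\mathbb{E}[f(\bm{x}_0, \ldots, \bm{x}_T, \bm\xi_{[T]}) \mid \bm\xi_0] \leq V(\bm\xi_0)$ almost surely. Integrating by the tower property and taking the supremum over feasible policies yields the bound.

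For the direction ``$\geq$'', I would invoke Theorem~14.60 to move the supremum over policies under the outer expectation. The product space $\mathcal{L}^{n_0}(\mathcal{F}_0) \times \prod_{t=1}^T \mathcal{L}^{n_t}(\mathcal{F}_{[t]})$ is decomposable, and the partially evaluated integrand $\mathbb{E}[f \mid \bm\xi_0]$ inherits joint measurability from the normal integrand $f$. The interchangeability principle then certifies that pointwise $\varepsilon$-optimal selections for the conditional problems---indexed by $\bm\xi_0$---can be glued into a single policy that is measurable in the appropriate sense and feasible in~\eqref{eq:msp}, attaining an objective value within $\varepsilon$ of $\mathbb{E}[V(\bm\xi_0)]$. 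Letting $\varepsilon \downarrow 0$ closes the gap.

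The main obstacle is verifying the measurability prerequisites of Theorem~14.60, in particular that $V(\bm\xi_0)$ is itself a measurable function of $\bm\xi_0$ and that near-optimal conditional policies can be chosen in a jointly measurable manner across $\bm\xi_0$. Both properties follow from the normal-integrand assumption on $f$, which ensures that the epigraphical multifunction is measurable and permits the application of standard measurable-selection theorems (e.g., Theorem~14.37 of Rockafellar and Wets). Some additional care is required because $f$ may adopt the value $-\infty$ to encode $\mathbb{P}$-almost sure constraints and because the optimal value may be infinite; these edge cases can be handled by passing to the effective domain and, if necessary, applying a standard truncation argument to restore the integrability required by the interchangeability principle.
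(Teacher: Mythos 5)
Your ``$\leq$'' direction is fine: freezing $\bm\xi_0$ turns a policy feasible in~\eqref{eq:msp} into one feasible in the inner problem of~\eqref{eq:msp-conditional}, and the tower property does the rest. The gap is in the ``$\geq$'' direction. Theorem~14.60 of \citet{ref:Rockafellar-10} interchanges a supremum and an integral when the decision is an $\mathbb{R}^n$-valued measurable function ranging over a decomposable space and the integrand is a normal integrand on $\Omega\times\mathbb{R}^n$. In your one-shot application, the ``decision'' attached to each realization of $\bm\xi_0$ is an entire continuation policy, an element of $\mathbb{R}^{n_0}\times\prod_{t\geq 1}\mathcal{L}^{n_t}(\mathcal{F}_{[1,t]})$, which is infinite-dimensional; the theorem does not cover this, and neither does Theorem~14.37, which yields measurability of optimal values and minimizers only for finite-dimensional decision variables. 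Hence the two facts you defer to ``standard'' results---measurability of $V(\bm\xi_0)$, the optimal value of an infinite-dimensional stochastic program parameterized by $\bm\xi_0$, and a jointly measurable ($\varepsilon$-optimal) selection of continuation policies---are exactly the nontrivial content of the lemma, and your proposal does not supply them.

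The paper avoids this obstacle by never interchanging over a whole policy: it writes~\eqref{eq:msp} as nested stage-wise suprema, moves the innermost supremum (over $\bm{x}_T\in\mathbb{R}^{n_T}$) inside the expectation via Theorem~14.60, inserts a conditional-expectation layer by the tower property, and repeats backwards through the stages; it then applies the same theorem in the reverse direction, one stage at a time, pulling each supremum out only past the layers conditioned on $\bm\xi_0$, which converts $\sup_{\bm{x}_t\in\mathbb{R}^{n_t}}$ inside $\mathbb{E}[\,\cdot\mid\bm\xi_{[t-1]}]$ into $\sup_{\bm{x}_t\in\mathcal{L}^{n_t}(\mathcal{F}_{[1,t]})}$ under $\mathbb{E}[\,\cdot\mid\bm\xi_0]$. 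Every interchange then involves a finite-dimensional variable and a normal integrand (conditional expectations of normal integrands being normal integrands), so the measurability you need is produced step by step rather than assumed. If you wish to keep your two-inequality structure, the ``$\geq$'' half should be replaced by, or reduced to, this stage-wise telescoping argument.
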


\begin{proof}{Proof} As joint optimization over the stage-wise decision variables is equivalent to sequential optimization, we can rewrite the multistage stochastic program~\eqref{eq:msp} as
\begin{equation*}
    \sup_{\bm{x}_0 \in \mathcal{L}^{n_0}(\mathcal{F}_0)} \ \sup_{\bm{x}_1 \in \mathcal{L}^{n_1}(\mathcal{F}_{[1]})} \ \cdots \ \sup_{\bm{x}_{T-1} \in \mathcal{L}^{n_{T-1}}(\mathcal{F}_{[T-1]}) } \ \sup_{\bm{x}_T \in \mathcal{L}^{n_T}(\mathcal{F}_{[T]})} \ \mathbb{E} \left[ f(\bm{x}_0, \ldots, \bm{x}_T, \bm\xi_{[T]}) \right].
\end{equation*}
By \citet[Theorem~14.60]{ref:Rockafellar-10}, the above problem is equivalent to
\begin{equation*}
    \sup_{\bm{x}_0 \in \mathcal{L}^{n_0}(\mathcal{F}_0)} \ \sup_{\bm{x}_1 \in \mathcal{L}^{n_1}(\mathcal{F}_{[1]})} \ \cdots \ \sup_{\bm{x}_{T-1} \in \mathcal{L}^{n_{T-1}}(\mathcal{F}_{[T-1]})} \ \mathbb{E} \left[ \sup_{\bm{x}_T \in \mathbb{R}^{n_T}} f(\bm{x}_0, \ldots, \bm{x}_T, \bm\xi_{[T]}) \right].
\end{equation*}
Recalling the tower property of conditional expectations, we can rewrite the resulting problem as
\begin{equation*}
    \sup_{\bm{x}_0 \in \mathcal{L}^{n_0}(\mathcal{F}_0)} \ \sup_{\bm{x}_1 \in \mathcal{L}^{n_1}(\mathcal{F}_{[1]})} \ \cdots \ \sup_{\bm{x}_{T-1} \in \mathcal{L}^{n_{T-1}}(\mathcal{F}_{[T-1]})} \ \mathbb{E} \left[ \mathbb{E} \left[ \sup_{\bm{x}_T \in \mathbb{R}^{n_T}} f(\bm{x}_0, \ldots, \bm{x}_T, \bm\xi_{[T]}) \,\Big\vert\, \bm\xi_{[T-1]} \right] \right]
\end{equation*}
and invoke the interchangeability theorem of \citet{ref:Rockafellar-10} once again to prove its equivalence to
\begin{equation*}
    \sup_{\bm{x}_0 \in \mathcal{L}^{n_0}(\mathcal{F}_0)} \ \sup_{\bm{x}_1 \in \mathcal{L}^{n_1}(\mathcal{F}_{[1]})} \ \cdots \ \mathbb{E}  \left[ \sup_{\bm{x}_{T-1} \in \mathbb{R}^{n_{T-1}}} \mathbb{E} \left[ \sup_{\bm{x}_T \in \mathbb{R}^{n_T}} f(\bm{x}_0, \ldots, \bm{x}_T, \bm\xi_{[T]}) \,\Big\vert\, \bm\xi_{[T-1]} \right] \right]
\end{equation*}
Repeating this argument, it is possible to move all supremum operators inside the appropriate conditional expectation layers such as to obtain the equivalent problem
\begin{equation*}
    \mathbb{E} \left[ \sup_{\bm{x}_0 \in \mathbb{R}^{n_0}} \mathbb{E} \left[ \sup_{\bm{x}_1 \in \mathbb{R}^{n_1}} \mathbb{E} \left[ \sup_{\bm{x}_2 \in \mathbb{R}^{n_2}} \cdots \ \mathbb{E} \left[\sup_{\bm{x}_T \in \mathbb{R}^{n_T}} f(\bm{x}_0, \ldots, \bm{x}_T, \bm\xi_{[T]}) \,\Big\vert\, \bm\xi_{[T-1]} \right] \cdots \,\Big\vert\, \bm\xi_{[1]} \right]\Big\vert\, \bm\xi_{0} \right] \right].
\end{equation*}
Applying the interchangeability theorem of \citet{ref:Rockafellar-10} in the reverse direction, we can then move the second supremum out of its conditional expectation layer to obtain
\begin{equation*}
    \mathbb{E} \left[ \sup_{\bm{x}_0 \in \mathbb{R}^{n_0}} \ \sup_{\bm{x}_1 \in \mathcal{L}^{n_1}(\mathcal{F}_{[1,1]})} \ \mathbb{E} \left[ \sup_{\bm{x}_2 \in \mathbb{R}^{n_2}} \cdots \ \mathbb{E} \left[\sup_{\bm{x}_T \in \mathbb{R}^{n_T}} f(\bm{x}_0, \ldots, \bm{x}_T, \bm\xi_{[T]}) \,\Big\vert\, \bm\xi_{[T-1]} \right] \cdots \, \Big\vert\, \bm\xi_{0} \right] \right].
\end{equation*}
Iterating this argument from the outside to the inside, it is indeed possible to move all supremum operators into the outermost (unconditional) expectation layer and thus obtain
\begin{equation*}
    \mathbb{E} \left[ \sup_{\bm{x}_0 \in \mathbb{R}^{n_0}} \ \sup_{\bm{x}_1 \in \mathcal{L}^{n_1}(\mathcal{F}_{[1,1]})} \ \sup_{\bm{x}_2 \in \mathcal{L}^{n_2}(\mathcal{F}_{[1,2]})} \cdots \sup_{\bm{x}_T \in \mathcal{L}^{n_T}(\mathcal{F}_{[1,T]})} \mathbb{E} \bigg[ f(\bm{x}_0, \ldots, \bm{x}_T, \bm\xi_{[T]}) \,\Big\vert\, \bm\xi_{0} \bigg] \right].
\end{equation*}
Re-combining the stage-wise supremum operators finally yields problem~\eqref{eq:msp-conditional}.
\qed
\end{proof}
~\\[-10mm]

\begin{proof}{Proof of Proposition~\ref{prop:collective_bilevel_separation}}
Observe that if the reservoir filling levels $\{ \bm{w}_d\}_{d \in \mathcal{D}}$ are fixed, then the remaining decisions in problem~\eqref{opt:collective_bidding_model_dailywater2} are no longer coupled across days. This allows us to decompose the stochastic program~\eqref{opt:collective_bidding_model_dailywater2} into an outer maximization problem over the end-of-day reservoir filling levels $\{ \bm{w}_d\}_{d \in \mathcal{D}}$ and a series of (mutually independent) inner stochastic programs maximizing over the bidding decisions $\{ ( {s}_t, {u}_t, {v}_t ) \}_{t \in \mathcal{T}(d)}$ and the operational decisions $\{ (\bm{g}_t, \bm{p}_t, \bm{z}_t )\}_{t \in \mathcal{T}(d)}$, one for each day $d\in\mathcal D$. Formally, problem~\eqref{opt:collective_bidding_model_dailywater2} is thus equivalent to the outer stochastic program
\begin{equation*}
\begin{array}{c@{~~~}l@{\,}l}
    \text{sup} & \displaystyle \sum_{d \in \mathcal{D}}  \hat \Pi_d( \bm w_{d-1}, \bm w_{d}) \\[1ex]
    \text{s.t.} & \bm{w}_d \in \mathcal{L}^R(\mathcal{F}_{[\uparrow(d)]}) & \forall d \in \mathcal{D} \\
    & \underline{\bm{w}}_d \leq \bm{w}_d \leq \overline{\bm{w}}_d & \forall d \in \mathcal{D},~ \text{$\mathbb{P}$-a.s.},
    \end{array}
\end{equation*}
where $\hat \Pi_d( \bm w_{d-1}, \bm w_{d})$ stands for the optimal value of the inner (parametric) stochastic program
\begin{equation}
\label{opt:trader-preliminary}
    \begin{array}{c@{~~~}ll} \text{sup} & \displaystyle \mathbb{E} \Big[ \sum_{t \in \mathcal{T}(d)} \pi_t^{\rm{s}}{s}_t + \revision{\kappa^{\rm u}_t}(\pi_t^{\rm{u}} + \rho_t^{\rm{u}} \psi_t^{\rm{u}}) u_t + \revision{\kappa^{\rm v}_t}(\pi_t^{\rm{v}} + \rho_t^{\rm{v}} \psi_t^{\rm{v}}) v_t \Big] \\[3mm]
    \text{s.t.} & {s}_t, {u}_t, {v}_t \in \mathcal{L}(\mathcal{F}_{[\Downarrow(d)]}), ~\bm{g}_t, \bm{p}_t, \bm{z}_t \in \mathcal{L}^A(\mathcal{F}_{[t]}) & \hspace{-7mm} \forall t \in \mathcal{T}(d) \\[3mm]
    & {0} \leq {u}_t, ~{0} \leq {v}_t, ~\bm{0} \leq \bm{g}_t \leq \overline{\bm{g}}_t, ~\bm{0} \leq \bm{p}_t \leq \overline{\bm{p}}_t, ~\bm{0} \leq \bm{z}_t & \hspace{-7mm} \forall t \in \mathcal{T}(d),~\text{$\mathbb{P}$-a.s.}\\[3mm]
    & {s}_t + \revision{\kappa^{\rm u}_t}\rho_t^{\rm{u}} {u}_t- \revision{\kappa^{\rm v}_t}\rho_t^{\rm{v}} {v}_t = \bm{\eta}_t^\top \bm{g}_t - \bm{\zeta}_t^\top \bm{p}_t & \hspace{-7mm} \forall t \in \mathcal{T}(d),~\text{$\mathbb{P}$-a.s.}\\
    & \displaystyle \underline{\bm{w}}_t \leq \bm{w}_{d-1} + \sum_{\tau=\downarrow(d)}^t \bm{\phi}_\tau + \mathbf{M}(\bm{g}_\tau - \bm{p}_\tau + \bm{z}_\tau) \leq \overline{\bm{w}}_t & \hspace{-7mm} \forall t \in \mathcal{T}(d),~\text{$\mathbb{P}$-a.s.}\\[3mm]
    & \displaystyle \bm{w}_d \leq \bm{w}_{d-1} + \sum_{\tau \in \mathcal{T}(d)} \bm{\phi}_\tau + \mathbf{M}(\bm{g}_\tau - \bm{p}_\tau + \bm{z}_\tau) & \hspace{-7mm} \text{$\mathbb{P}$-a.s.},
    \end{array}
\end{equation}
which optimizes only over decisions pertaining to day $d$. Observe that problem~\eqref{opt:trader-preliminary} differs from the trader's problem~\eqref{opt:collective_trading} in that it maximizes an {\em un}conditional expectation of the profits earned on day $d$ and in that its decision variables may still adapt to information revealed {\em prior} to day~$d$.

We will now use Lemma~\ref{lem:maxexp2expmax} to show that $\hat \Pi_d( \bm w_{d-1}, \bm w_{d})$ coincides with the expected value of $\Pi_d( \bm w_{d-1}, \bm w_{d}, \bm\xi_{[\Downarrow(d)]})$. For notational convenience, we abbreviate the random variables revealed before the beginning of day $d$ as $\bm{\xi}_{0}^d = \bm{\xi}_{[\Downarrow(d)]}$ and those revealed up to each hour $h= 1, \ldots, H$ of day $d$ by $\bm{\xi}_{[h]}^d = \bm{\xi}_{[\Downarrow(d)+h]}$. Furthermore, we gather the hourly decisions of day $d$ in the vectors
\begin{equation*}
\begin{aligned}
& \bm{x}_0^d = ( \{ {s}_\tau, {u}_\tau, {v}_\tau \}_{\tau=\downarrow(d)}^{\uparrow(d)} ) \in \mathcal{L}^{3 H}(\mathcal{F}_{[\Downarrow(d)]}),\\
& \bm{x}_h^d = (\bm{g}_{\Downarrow(d)+h}, \bm{p}_{\Downarrow(d)+h}, \bm{z}_{\Downarrow(d)+h}) \in \mathcal{L}^{3 A}(\mathcal{F}_{[\Downarrow(d)+h]}) \hspace{2mm} &&\forall h = 1,\ldots,H.
\end{aligned}
\end{equation*}
Using this notation, the profit earned on day $d$ can be expressed more concisely as
\begin{equation*}
f^d(\bm{x}_0^d,\ldots, \bm{x}_H^d, \bm\xi_{[H]}^d) = \left\{ \begin{array}{l@{\,}l} \displaystyle \sum_{t \in \mathcal{T}(d)} \pi_t^{\rm{s}} s_t + \revision{\kappa^{\rm u}_t}(\pi_t^{\rm{u}} & + \rho_t^{\rm{u}} \psi_t^{\rm{u}}) u_t + \revision{\kappa^{\rm v}_t}(\pi_t^{\rm{v}} + \rho_t^{\rm{v}} \psi_t^{\rm{v}}) v_t  \\[1ex]
& \text{if $(\bm{x}_0^d,\ldots, \bm{x}_H^d)$ satisfies all $\mathbb P$-almost sure} \\ & \text{constraints of problem~\eqref{opt:trader-preliminary} in scenario $\bm\xi_{[H]}^d$},\\[2.2ex]
-\infty & \text{otherwise.}
\end{array} \right.
\end{equation*}
Problem~\eqref{opt:trader-preliminary} can thus be represented abstractly as
\begin{equation*}
    \hat \Pi_d( \bm w_{d-1}, \bm w_{d})=
    \left\{ \begin{array}{lll} \text{sup} & \mathbb{E} \big[ f^d(\bm{x}_0^d,\ldots, \bm{x}_H^d, \bm\xi_{[H]}^d) \big] \\[3mm]
    \text{s.t.} & \bm{x}_0^d \in \mathcal{L}^{3 H}(\mathcal{F}_{[\Downarrow(d)]}), ~\bm{x}_h^d \in \mathcal{L}^{3A}(\mathcal{F}_{[\Downarrow(d)+h]}) 
    & \hspace{2mm}\forall h = 1, \ldots, H.
    \end{array}\right.
\end{equation*}
By the extended interchangeability principle established in Lemma~\ref{lem:maxexp2expmax}, we thus find 
\begin{equation*}
    \hat \Pi_d( \bm w_{d-1}, \bm w_{d})=
    \mathbb{E} \left[ \begin{array}{lll} \text{sup} & \mathbb{E} \big[ f^d(\bm{x}_0^d,\ldots, \bm{x}_H^d, \bm\xi_{[H]}^d)  \,\vert\, \bm\xi_{0}^d \big] \\[3mm]
    \text{s.t.} & \bm{x}_0^d \in \mathbb{R}^{3 H}, ~\bm{x}_h^d \in \mathcal{L}^{3 A}(\mathcal{F}_{[\downarrow(d),\Downarrow(d)+h]}) & \hspace{2mm}\forall h = 1, \ldots, H
    \end{array} \right] .
\end{equation*}
Unravelling the abbreviations shows that the stochastic program inside the expectation coincides with the trader's problem~\eqref{opt:collective_trading}, whose optimal value is given by \revision{$\Pi_d( \bm w_{d-1}, \bm w_{d}, \bm\xi_{[\Downarrow(d)]})$}.
\qed
\end{proof}

~\\[-10mm]

\begin{proof}{Proof of Proposition~\ref{prop:rho_restriction_collective}}
Throughout this proof we will fix a realization of the contextual covariates~$\bm\xi_{[\Downarrow(d)]}$, which reflect the trader's information when solving problem~\eqref{opt:collective_trading} for day $d$. Approximation~\ref{apx:day_ahead_water_level} then implies that $\bm w_{d-1}$ and $\bm w_d$ reduce to deterministic constants. In addition, $\mathbb{P}_{\vert \bm\xi_{[\Downarrow(d)]}}$ becomes an {\em un}conditional distribution. Below we will denote the expectation with respect to this distribution by $\mathbb{E}_{\vert\bm\xi_{[\Downarrow(d)]}}[\cdot]$.

As problem~\eqref{opt:collective_trading_rho} is a restriction of problem~\eqref{opt:collective_trading}, it suffices to show that for every feasible solution $\{  (s_t, u_t, v_t, \bm{g}_t, \bm{p}_t, \bm{z}_t) \}_{t \in \mathcal{T}(d)}$ of \eqref{opt:collective_trading} there is a feasible solution $\{  (s_t^\prime, u_t^\prime, v_t^\prime, \bm{g}_t^\prime, \bm{p}_t^\prime, \bm{z}_t^\prime) \}_{t \in \mathcal{T}(d)}$ of \eqref{opt:individual_trading_rho} that attains the same objective value. Such a solution can readily be constructed as
\begin{equation*}
\begin{aligned}
(s_t^\prime, u_t^\prime, v_t^\prime) = (s_t, u_t, v_t) \quad \text{and} \quad (\bm{g}_t^\prime, \bm{p}_t^\prime, \bm{z}_t^\prime) = \mathbb{E}_{\vert\bm\xi_{[\Downarrow(d)]}} [ (\bm{g}_t, \bm{p}_t, \bm{z}_t) \,\vert\, \revision{\mathcal{F}_{[\downarrow(d),t]}^\rho}] \quad \forall t \in \mathcal{T}(d).
\end{aligned}
\end{equation*}
The equality of objective values is immediate because the objective function only depends on the bidding decisions, which are preserved. The non-anticipativity constraints $\bm{g}_t^\prime, \bm{p}_t^\prime, \bm{z}_t^\prime \in \mathcal{L}^A(\mathcal{F}_{[\downarrow(d),t]}^\rho)$ are also satisfied thanks to the defining properties of conditional expectations. To show that the almost sure constraints hold, we recall first that the reserve activations are serially independent and independent of all other sources of uncertainty. This implies that \revision{$\{(\rho_\tau^{\rm{u}}, \rho_\tau^{\rm{v}}) \}_{\tau = t+1}^{\uparrow(d)}$}  is independent of the non-anticipative flow decisions $\bm{g}_t, \bm{p}_t, \bm{z}_t\in \mathcal{L}^A(\mathcal{F}_{[\downarrow(d),t]} )$ under the distribution $\mathbb{P}_{\vert \bm\xi_{[\Downarrow(d)]}}$, that is 
\begin{equation}
\label{eq:rho-independence}
(\bm{g}_t^\prime, \bm{p}_t^\prime, \bm{z}_t^\prime) = \mathbb{E}_{\vert \bm\xi_{[\Downarrow(d)]}} [ (\bm{g}_t, \bm{p}_t, \bm{z}_t) \,\vert\, \revision{\mathcal{F}_{[\downarrow(d),\uparrow(d)]}^\rho}] \quad \mathbb{P}_{\vert \bm\xi_{[\Downarrow(d)]}}\text{-a.s.}
\end{equation}
for all $t \in \mathcal{T}(d)$. The feasibility of $\{  ({s}_t^\prime, {u}_t^\prime, {v}_t^\prime, \bm{g}_t^\prime, \bm{p}_t^\prime, \bm{z}_t^\prime) \}_{t \in \mathcal{T}(d)}$ in \eqref{opt:collective_trading_rho} therefore follows from the feasibility of $\{  ({s}_t, {u}_t, {v}_t, \bm{g}_t, \bm{p}_t, \bm{z}_t) \}_{t \in \mathcal{T}(d)}$ in~\eqref{opt:collective_trading} and the linearity of the almost sure constraints in~\eqref{opt:collective_trading_rho},~which implies that they all remain valid under conditional expectations. For example, the newly constructed solution $\{  ({s}_t^\prime, {u}_t^\prime, {v}_t^\prime, \bm{g}_t^\prime, \bm{p}_t^\prime, \bm{z}_t^\prime) \}_{t \in \mathcal{T}(d)}$ obeys the end-of-day reservoir bound in \eqref{opt:individual_trading_rho} because
\begin{align*}
	\bm w_d=&~ \textstyle  \mathbb{E}_{\vert \bm\xi_{[\Downarrow(d)]}} [ \bm w_d \,\vert\, \revision{\mathcal{F}_{[\downarrow(d),\uparrow(d)]}^\rho}] \\
	\leq &~ \textstyle \mathbb{E}_{\vert \bm\xi_{[\Downarrow(d)]}} [ \bm{w}_{d-1} + \sum_{\tau \in \mathcal T(d)}\bm{\phi}_\tau + \mathbf{M} 
	( \bm{g}_\tau - \bm{p}_\tau + \bm{z}_\tau ) \,\vert\, \revision{\mathcal{F}_{[\downarrow(d),\uparrow(d)]}^\rho}]  \\
	=& ~ \textstyle \bm{w}_{d-1} + \sum_{\tau \in \mathcal T(d)} \bm{\phi}_\tau + \mathbf{M} 
	( \bm{g}_\tau^\prime - \bm{p}^\prime_\tau + \bm{z}^\prime_\tau ) \quad \forall t\in\mathcal T(d),~\mathbb{P}_{\vert \bm\xi_{[\Downarrow(d)]}}\text{a.s.},
\end{align*}
where the first equality follows from Approximation~\ref{apx:day_ahead_water_level}, whereby $\bm w_d$ is deterministic when $\bm\xi_{[\Downarrow(d)]}$ is kept fixed, the inequality holds because $\{  ({s}_t, {u}_t, {v}_t, \bm{g}_t, \bm{p}_t, \bm{z}_t) \}_{t \in \mathcal{T}(d)}$ satisfies the last constraint of~\eqref{opt:collective_trading}, and the last equality follows from~\eqref{eq:rho-independence}. Thus, the claim follows.
\qed
\end{proof}

~\\[-10mm]

\begin{proof}{Proof of Proposition~\ref{prop:down_cut_collective}}
Any feasible solution of the stochastic program~\eqref{opt:collective_trading_rho} satisfies the energy delivery constraint ${s}_t + \revision{\kappa^{\rm u}_t}\rho^{\rm u}_t {u}_t - \revision{\kappa^{\rm v}_t}\rho^{\rm v}_t {v}_t  = \bm\eta_t^\top \bm{g}_t - \bm\zeta_t^\top \bm{p}_t$ for all $t \in \mathcal{T}(d)$ $\mathbb{P}_{\vert \bm\xi_{[\Downarrow(d)]}}$-almost surely. Similarly, for any $t \in \mathcal{T}(d)$, we have that $\bm{g}_t \geq \bm{0}$ and $\bm{p}_t \leq \overline{\bm{p}}_t$ $\mathbb{P}_{\vert \bm\xi_{[\Downarrow(d)]}}$-almost surely. Together, these inequalities imply 
\begin{equation*}
\begin{aligned}
{s}_t + \revision{\kappa^{\rm u}_t}\rho^{\rm u}_t {u}_t - \revision{\kappa^{\rm v}_t}\rho^{\rm v}_t {v}_t  \; = \; \bm\eta_t^\top \bm{g}_t - \bm\zeta_t^\top \bm{p}_t \; \geq \; - \bm\zeta_t^\top \bm{p}_t \; \geq \; - \bm\zeta_t^\top \overline{\bm{p}}_t \quad \forall t \in \mathcal{T}(d), ~ \text{$\mathbb{P}_{\vert \bm\xi_{[\Downarrow(d)]}}$-a.s.}
\end{aligned}
\end{equation*}
\revision{As the scenario $(0, 1, 0,\overline{\rho}^{\rm{v}}_t)$ belongs to the support of the random vector~$(\kappa_t^{\rm u}, \kappa_t^{\rm v}, \rho_t^{\rm u},\rho_t^{\rm{v}})$ under the conditional probability measure~$\mathbb{P}_{\vert \bm\xi_{[\Downarrow(d)]}}$, we thus find~${s}_t - \overline{\rho}^{\rm{v}}_t {v}_t \geq - \bm\zeta_t^\top \overline{\bm{p}}_t$ for every~$t \in \mathcal{T}(d)$.} \qed
\end{proof}

~\\[-8mm]

\begin{proof}{Proof of Proposition~\ref{prop:robusttodeterministic_collective1}}
We will prove the proposition by showing that every feasible solution of problem~\eqref{opt:collective_trading_cuts} corresponds to a feasible solution of problem~\eqref{opt:reduced_collective_trading} with the same objective value. To this end, fix any feasible solution $\left\{ ({s}_t, {u}_t, {v}_t, \bm{g}_t, \bm{p}_t, \bm{z}_t) \right\}_{t \in \mathcal{T}(d)}$ of problem~\eqref{opt:collective_trading_cuts} and an arbitrary realization of the contextual covariates $\bm\xi_{[\Downarrow(d)]}$. \revision{Since all non-dummy reservoirs and pumps have finite capacity, 
there is~$\overline{\bm z}\in\mathbb R^A$ such that~$\bm z_t\leq \overline{\bm z}$ $\mathbb{P}_{\vert \bm\xi_{[\Downarrow(d)]}}$-almost surely for all~$t\in\mathcal T(d)$. 
Then, there exists an event $\Omega_0\in \mathcal F$ with $\mathbb{P}_{\vert \bm\xi_{[\Downarrow(d)]}}[\Omega_0]=1$ such that all constraints of problem~\eqref{opt:collective_trading_cuts} are satisfied pointwise for all $\omega\in\Omega_0$. In addition, by our assumptions about the statistics of the reserve allocations and activations, there exists a sequence of samples~$\omega_n\in\Omega_0$, $n\in\mathbb N$, such that 
\[
    \lim_{n\rightarrow\infty}\kappa^{\rm u}_t(\omega_n) = 1,\quad \lim_{n\rightarrow\infty}\rho_t^{\rm{u}}(\omega_n) = \overline{\rho}^{\rm{u}}_t \quad \text{and}\quad \lim_{n\rightarrow\infty}\rho_t^{\rm{v}}(\omega_n) =0\quad \forall t \in \mathcal{T}(d). 
\]
Note that the sequence $\{(\bm{g}_t(\omega_n), \bm{p}_t(\omega_n), \bm{z}_t(\omega_n))\}_{n\in\mathbb N}$ is bounded. By passing to a sub-sequence if necessary, we may thus assume that this sequence converges for every $t \in \mathcal{T}(d)$, and we may define
\[
    \bm{g}_t'= \lim_{n\rightarrow\infty}\bm{g}_t(\omega_n),\quad \bm{p}_t'=\lim_{n\rightarrow\infty}\bm{p}_t(\omega_n) \quad \text{and}\quad \bm{z}_t'= \lim_{n\rightarrow\infty}\bm{z}_t(\omega_n) \quad \forall t \in \mathcal{T}(d). 
\]
One readily verifies that $\left\{ ({s}_t, {u}_t, {v}_t, \bm{g}_t', \bm{p}_t', \bm{z}_t') \right\}_{t \in \mathcal{T}(d)}$ is feasible in~\eqref{opt:reduced_collective_trading} because the constraints of the stochastic program~\eqref{opt:collective_trading_cuts} are continuous in the uncertain parameters and the uncertain flow decisions. 
Also, the objective value of $\left\{ ({s}_t, {u}_t, {v}_t, \bm{g}_t', \bm{p}_t', \bm{z}_t') \right\}_{t \in \mathcal{T}(d)}$ in~\eqref{opt:reduced_collective_trading} coincides with that of $\left\{ ({s}_t, {u}_t, {v}_t, \bm{g}_t, \bm{p}_t, \bm{z}_t) \right\}_{t \in \mathcal{T}(d)}$ in~\eqref{opt:collective_trading_cuts} because the two solutions involve identical market bids. The claim thus follows.
}
\qed
\end{proof}

~\\[-10mm]

\begin{proof}{Proof of Theorem~\ref{thm:all_collective}}
\revision{This is an immediate consequence of Propositions~\ref{prop:rho_restriction_collective}, \ref{prop:down_cut_collective} and~\ref{prop:robusttodeterministic_collective1}, as well as Proposition~\ref{prop:robusttodeterministic_collective2} from Appendix~\ref{sec:reduction_individual}.} \qed 
\end{proof}

~\\[-8mm]

\begin{proof}{Proof of Theorem~\ref{thm:all_collective_bidding}}
From Propositions~\ref{prop:collective_inequality_equivalence} and~\ref{prop:collective_bilevel_separation} we know that the bidding model~\eqref{opt:collective_bidding_model} has the same optimal value as the planner's problem~\eqref{opt:collective_bidding_model_dailywater_sep}, and it is easy to see that problem~\eqref{opt:reduced_collective_planning} is obtained by applying Approximation~\ref{apx:day_ahead_water_level} to problem~\eqref{opt:collective_bidding_model_dailywater_sep}. As this approximation consists in restricting the planner's information structure, the optimal value of problem~\eqref{opt:reduced_collective_planning} is no larger than that of problem~\eqref{opt:collective_bidding_model}. It remains to be shown that~\eqref{opt:reduced_collective_planning} and \eqref{opt:collective_bidding_model_dailywater_sep_apx} share the same optimal value. As Approximation~\ref{apx:day_ahead_water_level} is in force, we may conclude via Theorem~\ref{thm:all_collective} that the optimal value $\Pi_d( \bm w_{d-1}, \bm w_{d}, \bm\xi_{[\Downarrow(d)]})$ of the stochastic program~\eqref{opt:collective_trading} coincides with the optimal value of the linear program~\eqref{opt:reduced_collective_trading}. Substituting~\eqref{opt:reduced_collective_trading} into~\eqref{opt:reduced_collective_planning} and using Theorem~14.60 by \citet{ref:Rockafellar-10} to move the maximization over the trading and flow decisions out of the expectation and the sum finally yields~\eqref{opt:collective_bidding_model_dailywater_sep_apx}. \qed
\end{proof}

\newpage

\section{\revision{Proof that Problem~\eqref{opt:collective_trading} is a Restriction of Problem~\eqref{opt:reduced_collective_trading}}}
\label{sec:reduction_individual}

\revision{In this appendix we prove that problem~\eqref{opt:collective_trading} is a restriction of problem~\eqref{opt:reduced_collective_trading}, that is, that the arc from the node labelled~\eqref{opt:reduced_collective_trading} to the node labelled~\eqref{opt:collective_trading} in Figure~\ref{fig:proofplan} is justified. To this end, we introduce a variant of problem~\eqref{opt:collective_trading} that we henceforth refer to as the \emph{individual} trader's problem.}

\begin{equation*}
\tag{IT}
\label{opt:individual_trading}
\begin{aligned}
\begin{array}{c@{~~~}ll} \text{sup} & \displaystyle \mathbb{E} \Big[ \sum_{t \in \mathcal{T}(d)} \pi_t^{\rm{s}} \, \mathbf{1}^\top \bm{s}_t + \revision{\kappa^{\rm u}_t}(\pi_t^{\rm{u}} + \rho_t^{\rm{u}} \psi_t^{\rm{u}}) \, \mathbf{1}^\top \bm{u}_t + \revision{\kappa^{\rm v}_t}(\pi_t^{\rm{v}} + \rho_t^{\rm{v}} \psi_t^{\rm{v}}) \, \mathbf{1}^\top \bm{v}_t \,\big\vert\, \bm\xi_{[\Downarrow(d)]} \Big] \\[3mm]
    \text{s.t.} & \bm{s}_t, \bm{u}_t, \bm{v}_t \in \mathbb{R}^A, ~\bm{g}_t, \bm{p}_t, \bm{z}_t \in \mathcal{L}^A(\mathcal{F}_{[\downarrow(d),t]}) & \hspace{-20mm} \forall t \in \mathcal{T}(d) \\[3mm]
    & \bm{0} \leq \bm{u}_t, ~\bm{0} \leq \bm{v}_t, ~\bm{0} \leq \bm{g}_t \leq \overline{\bm{g}}_t, ~\bm{0} \leq \bm{p}_t \leq \overline{\bm{p}}_t, ~\bm{0} \leq \bm{z}_t & \hspace{-20mm} \forall t \in \mathcal{T}(d),~\text{$\mathbb{P}_{\vert \bm\xi_{[\Downarrow(d)]}}$-a.s.}\\[3mm]
    & \bm{s}_t + \revision{\kappa^{\rm u}_t}\rho_t^{\rm{u}} \bm{u}_t- \revision{\kappa^{\rm v}_t}\rho_t^{\rm{v}} \bm{v}_t = \bm{\eta}_t \circ \bm{g}_t - \bm{\zeta}_t \circ \bm{p}_t & \hspace{-20mm} \forall t \in \mathcal{T}(d),~\text{$\mathbb{P}_{\vert \bm\xi_{[\Downarrow(d)]}}$-a.s.}\\
    & \displaystyle \underline{\bm{w}}_t \leq \bm{w}_{d-1} + \sum_{\tau=\downarrow(d)}^t \bm{\phi}_\tau + \mathbf{M}(\bm{g}_\tau - \bm{p}_\tau + \bm{z}_\tau) \leq \overline{\bm{w}}_t & \hspace{-20mm} \forall t \in \mathcal{T}(d),~\text{$\mathbb{P}_{\vert \bm\xi_{[\Downarrow(d)]}}$-a.s.}\\[3mm]
    & \displaystyle \bm{w}_d \leq \bm{w}_{d-1} + \sum_{\tau \in \mathcal{T}(d)} \bm{\phi}_\tau + \mathbf{M}(\bm{g}_\tau - \bm{p}_\tau + \bm{z}_\tau) & \hspace{-20mm} \text{$\mathbb{P}_{\vert \bm\xi_{[\Downarrow(d)]}}$-a.s.}
    \end{array}
\end{aligned}
\end{equation*}

To understand how~\eqref{opt:individual_trading} differs from~\eqref{opt:collective_trading}, recall that energy is produced
by the generators and consumed by the pumps installed along the arcs \revision{of the reservoir topology}. Under problem~\eqref{opt:individual_trading}, the company places an individual bid for every arc of its reservoir system. We gather these individual (arc-specific) bids placed on the spot market, the reserve-up market and the reserve-down market in the vectors $\bm{s}_t, \bm{u}_t, \bm{v}_t \in \mathcal{L}^A(\mathcal{F}_{[\downarrow(t)]})$ [MWh], respectively. In contrast, the previous trader's \revision{problem}~\eqref{opt:collective_trading} contains aggregate decisions $s_t, u_t, v_t  \in \mathcal{L}(\mathcal{F}_{[\downarrow(t)]})$. Note that~\eqref{opt:collective_trading} is non-inferior
to~\eqref{opt:individual_trading} because it increases the producer’s flexibility in choosing the generation and
pumping decisions. We show below that, similarly to~\eqref{opt:collective_trading}, problem~\eqref{opt:individual_trading} can undergo several simplifications. \revision{However, we will show that~\eqref{opt:individual_trading} itself eventually reduces to the tractable linear program
\begin{equation*} \tag{IT${}^{\rm{r}}$}
\label{opt:reduced_individual_trading}
\begin{aligned}
    &\text{sup} && \sum_{t \in \mathcal{T}(d)} \tilde{\pi}_t^{\rm{s}} \, \bm{1}^\top\bm{s}_t + \tilde{\pi}_t^{\rm{u}} \, \bm{1}^\top\bm{u}_t + \tilde{\pi}_t^{\rm{v}} \, \bm{1}^\top\bm{v}_t \\
    &\text{s.t.} && \bm{s}_t, \bm{u}_t, \bm{v}_t \in \mathbb{R}^A, ~\bm{g}_t, \bm{p}_t, \bm{z}_t \in \mathbb{R}^A && \forall t \in \mathcal{T}(d)\\
    &	  && \bm{0} \leq \bm{u}_t, ~\bm{0} \leq \bm{v}_t, ~\bm{0} \leq \bm{g}_t \leq \overline{\bm{g}}_t, ~\bm{0} \leq \bm{p}_t \leq \overline{\bm{p}}_t, ~\bm{0} \leq \bm{z}_t && \forall t \in \mathcal{T}(d)\\
    &	  && \bm{s}_t + \revision{\overline{\rho}^{\rm{u}}_t} \bm{u}_t = \bm{\eta}_t \circ \bm{g}_t - \bm{\zeta}_t \circ \bm{p}_t && \forall t \in \mathcal{T}(d)\\
    & 	  && \bm{s}_t - \revision{\overline{\rho}^{\rm{v}}_t} \bm{v}_t \geq - \bm{\zeta}_t \circ \overline{\bm{p}}_t && \forall t \in \mathcal{T}(d)\\
    &	  && \underline{\bm{w}}_t \leq \bm{w}_{d-1} + \sum_{\tau = \downarrow(d)}^t \bm{\phi}_\tau + \mathbf{M} ( \bm{g}_\tau - \bm{p}_\tau + \bm{z}_\tau ) \leq \overline{\bm{w}}_t && \forall t \in \mathcal{T}(d)\\
    &	  && \bm{w}_d \leq \bm{w}_{d-1} + \sum_{\tau \in \mathcal{T}(d)} \bm{\phi}_\tau + \mathbf{M} ( \bm{g}_\tau - \bm{p}_\tau + \bm{z}_\tau )
\end{aligned}
\end{equation*} 
if we impose Approximation~\ref{apx:day_ahead_water_level}, whereby $\bm{w}_d$ is restricted to $\mathcal{L}^R(\mathcal{F}_{[\uparrow(d-1)]})$}. \revision{By construction, problem~\eqref{opt:reduced_collective_trading} is non-inferior to~\eqref{opt:reduced_individual_trading}. Proposition~\ref{prop:robusttodeterministic_collective2} below, which has previously been used to prove Theorem~\ref{thm:all_collective}, shows that~\eqref{opt:reduced_individual_trading} is non-inferior to~\eqref{opt:reduced_collective_trading} as well.}

\revision{In analogy to Proposition~\ref{prop:rho_restriction_collective}, we first establish that, conditional on $\bm\xi_{[\Downarrow(d)]}$, the wait-and-see decisions $\{ (\bm{g}_t, \bm{p}_t, \bm{z}_t) \}_{t \in \mathcal{T}(d)}$ in~\eqref{opt:individual_trading} may be restricted to measurable functions of the reserve allocations and the reserve activations without sacrificing optimality. This additional information restriction gives rise to the problem}
\begin{equation}
\label{opt:individual_trading_rho}
\begin{aligned}
    &\text{sup} && \sum_{t \in \mathcal{T}(d)} \tilde{\pi}_t^{\rm{s}} \, \bm{1}^\top\bm{s}_t + \tilde{\pi}_t^{\rm{u}} \, \bm{1}^\top\bm{u}_t + \tilde{\pi}_t^{\rm{v}} \, \bm{1}^\top\bm{v}_t \\
    &\text{s.t.} && \bm{s}_t, \bm{u}_t, \bm{v}_t \in \mathbb{R}^A, ~\bm{g}_t, \bm{p}_t, \bm{z}_t \in \mathcal{L}^A(\mathcal{F}_{[\downarrow(d),t]}^\rho) && \forall t \in \mathcal{T}(d)\\
    &	  && \bm{0} \leq \bm{u}_t, ~\bm{0} \leq \bm{v}_t, ~\bm{0} \leq \bm{g}_t \leq \overline{\bm{g}}_t, ~\bm{0} \leq \bm{p}_t \leq \overline{\bm{p}}_t, ~\bm{0} \leq \bm{z}_t && \forall t \in \mathcal{T}(d),~\text{$\mathbb{P}_{\vert \bm\xi_{[\Downarrow(d)]}}$-a.s.}\\
    &	  && \bm{s}_t + \revision{\kappa^{\rm u}_t}\rho_t^{\rm{u}} \bm{u}_t - \revision{\kappa^{\rm v}_t}\rho_t^{\rm{v}} \bm{v}_t = \bm{\eta}_t \circ \bm{g}_t - \bm{\zeta}_t \circ \bm{p}_t && \forall t \in \mathcal{T}(d),~\text{$\mathbb{P}_{\vert \bm\xi_{[\Downarrow(d)]}}$-a.s.}\\
    &	  && \underline{\bm{w}}_t \leq \bm{w}_{d-1} + \sum_{\tau = \downarrow(d)}^t \bm{\phi}_\tau + \mathbf{M} ( \bm{g}_\tau - \bm{p}_\tau + \bm{z}_\tau ) \leq \overline{\bm{w}}_t && \forall t \in \mathcal{T}(d),~\text{$\mathbb{P}_{\vert \bm\xi_{[\Downarrow(d)]}}$-a.s.}\\
    &	  && \bm{w}_d \leq \bm{w}_{d-1} + \sum_{\tau \in \mathcal{T}(d)} \bm{\phi}_\tau + \mathbf{M} ( \bm{g}_\tau - \bm{p}_\tau + \bm{z}_\tau ) && \text{$\mathbb{P}_{\vert \bm\xi_{[\Downarrow(d)]}}$-a.s.},
\end{aligned}
\end{equation}

\begin{proposition}
\label{prop:rho_restriction}
Under Approximation~\ref{apx:day_ahead_water_level}, the optimal values of problems~\eqref{opt:individual_trading} and \eqref{opt:individual_trading_rho} are equal.
\end{proposition}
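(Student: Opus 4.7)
{Proof Proposal}
The approach is to show both inequalities between the optimal values. The inclusion $\mathcal{F}^\rho_{[\downarrow(d),t]} \subseteq \mathcal{F}_{[\downarrow(d),t]}$ (after conditioning on $\bm\xi_{[\Downarrow(d)]}$) makes problem~\eqref{opt:individual_trading_rho} a restriction of problem~\eqref{opt:individual_trading}, which immediately gives $\text{val}\eqref{opt:individual_trading_rho}\le \text{val}\eqref{opt:individual_trading}$. For the reverse inequality, the plan is to take an arbitrary feasible solution $(\bm s_t,\bm u_t,\bm v_t,\bm g_t,\bm p_t,\bm z_t)_{t\in\mathcal T(d)}$ of~\eqref{opt:individual_trading} and construct a feasible solution of~\eqref{opt:individual_trading_rho} attaining the same objective value by keeping the bids unchanged and replacing each flow variable by its conditional expectation onto the coarser filtration generated by the reserve activations alone.

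Concretely, I would define $\bm g'_t = \mathbb{E}[\bm g_t \mid \mathcal{F}^\rho_{[\downarrow(d),t]},\bm\xi_{[\Downarrow(d)]}]$, and analogously $\bm p'_t, \bm z'_t$. These new flows are $\mathcal{F}^\rho_{[\downarrow(d),t]}$-measurable by construction, and they remain pointwise non-negative and bounded above by the deterministic caps $\overline{\bm g}_t, \overline{\bm p}_t$ because conditional expectation is monotone and preserves deterministic bounds. Since the coefficients $\tilde\pi_t^{\rm s}, \tilde\pi_t^{\rm u}, \tilde\pi_t^{\rm v}$ are $\bm\xi_{[\Downarrow(d)]}$-measurable and the bids are unchanged, the objective of~\eqref{opt:individual_trading} evaluated at the original solution coincides (after rewriting the conditional expectation using the tower property) with the objective of~\eqref{opt:individual_trading_rho} at the constructed solution.

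The substantive step is to verify the remaining constraints. For the flow-balance equation, I would take $\mathbb{E}[\,\cdot\mid \mathcal{F}^\rho_{[\downarrow(d),t]},\bm\xi_{[\Downarrow(d)]}]$ on both sides: the left-hand side $\bm s_t+\rho^{\rm u}_t\bm u_t-\rho^{\rm v}_t\bm v_t$ is already measurable with respect to this $\sigma$-algebra (because the bids are $\bm\xi_{[\Downarrow(d)]}$-measurable and $\bm\eta_t,\bm\zeta_t$ are deterministic), so it passes through unchanged and the right-hand side becomes $\bm\eta_t\circ\bm g'_t-\bm\zeta_t\circ\bm p'_t$. For the hourly reservoir bounds and the end-of-day target, I would similarly take conditional expectation with respect to $\mathcal{F}^\rho_{[\downarrow(d),t]}$ (respectively, $\mathcal{F}^\rho_{[\downarrow(d),\uparrow(d)]}$), observing that $\bm w_{d-1}$, $\bm w_d$, $\bm\phi_\tau$ and the bounds $\underline{\bm w}_t, \overline{\bm w}_t$ are either deterministic or $\bm\xi_{[\Downarrow(d)]}$-measurable (using Approximation~\ref{apx:day_ahead_water_level} for $\bm w_d$ and $\bm w_{d-1}$), and hence pass through the conditional expectation.

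The key obstacle I anticipate is that, after taking conditional expectation in a reservoir constraint at hour $t$, one encounters terms of the form $\mathbb{E}[\bm g_\tau \mid \mathcal{F}^\rho_{[\downarrow(d),t]},\bm\xi_{[\Downarrow(d)]}]$ for $\tau\le t$, which a priori differ from the non-anticipative $\bm g'_\tau$. The resolution exploits the paper's assumption that the reserve activations are serially independent and independent of all other exogenous uncertainties: since $\bm g_\tau$ is $\mathcal{F}_{[\downarrow(d),\tau]}$-measurable and the additional random variables $(\rho^{\rm u}_{\tau+1},\rho^{\rm v}_{\tau+1},\ldots,\rho^{\rm u}_t,\rho^{\rm v}_t)$ are independent of $(\psi^{\rm u}_s,\psi^{\rm v}_s,\rho^{\rm u}_s,\rho^{\rm v}_s)_{s\le\tau}$ and of $\bm\xi_{[\Downarrow(d)]}$, the stacked conditional expectation collapses to $\bm g'_\tau$. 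Once this identity is established, the reservoir constraints for the primed solution follow directly, completing the construction.
\qed
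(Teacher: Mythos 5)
Your proposal is correct and follows essentially the same route as the paper's proof: keep the bids, project the flow decisions onto the $\sigma$-algebra generated by the reserve activations via conditional expectation, and use the serial independence of the activations (and their independence from all other uncertainties) to collapse the conditioned flows in constraints coupling several hours, exactly the identity the paper records in its equation~\eqref{eq:rho-independence}. The only cosmetic difference is that the paper conditions each almost sure constraint on the full-day activation history and then invokes that identity, whereas you condition hour by hour, but the underlying argument is the same.
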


\begin{proof}{Proof}
As in the proof of Proposition~\ref{prop:rho_restriction_collective}, one may condition any feasible solution of problem~\eqref{opt:individual_trading} on the history of reserve activations to construct a feasible solution of problem~\eqref{opt:individual_trading_rho} that adopts the same objective value. Details are omitted for brevity. \qed
\end{proof}

\revision{Proposition~\ref{prop:rho_restriction} asserts that the individual trader's problem~\eqref{opt:individual_trading} is equivalent to the stochastic program~\eqref{opt:individual_trading_rho}, which accommodates only~$H+1$ decision stages and whose wait-and-see decisions depend only on the reserve allocations and the reserve activations.
However, problem~\eqref{opt:individual_trading_rho} still constitutes an infinite-dimensional linear program. \revision{We now show that problem~\eqref{opt:individual_trading_rho} is indeed equivalent to an efficiently solvable finite linear program. As a first step towards this goal, we derive a family of valid inequalities that may be added to problem~\eqref{opt:individual_trading_rho} without affecting its feasible set.}}

\begin{proposition}
\label{prop:down_cut}
Any feasible solution of \eqref{opt:individual_trading_rho} satisfies $\bm{s}_t -\revision{\overline{\rho}^{\rm{v}}_t} \bm{v}_t \geq - \bm{\zeta}_t \circ \overline{\bm{p}}_t$
for all $t \in \mathcal{T}(d)$.
\end{proposition}

\begin{proof}{Proof}
The proof widely parallels that of Proposition~\ref{prop:down_cut_collective} and is thus omitted for brevity. \qed
\end{proof}

~\\[-10mm]

By Proposition~\ref{prop:down_cut}, the deterministic inequalities $\bm{s}_t - \revision{\overline{\rho}^{\rm{v}}_t} \bm{v}_t \geq - \bm{\zeta}_t \circ \overline{\bm{p}}_t$ are valid for all $t \in \mathcal{T}(d)$~and can therefore be appended to problem~\eqref{opt:individual_trading_rho} without affecting its feasible set. Observe that $\bm{s}_t + \bm{\zeta}_t \circ \overline{\bm{p}}_t$ comprises the arc-wise maximum bids in the reserve-down market on which the company can deliver in case of a call-off. To see this, note that $s_{t,a}$ is the amount of energy produced on arc $a$ for the spot market and that $\zeta_{t,a} \overline{p}_{t,a}$ represents the maximum amount of energy that can be absorbed on arc~$a$ by pumping. In case of a call-off on the reserve-down market, the energy production on arc $a$ can thus be reduced at most by $s_{t,a} + \zeta_{t,a} \overline{p}_a$. Appending the valid inequalities to~\eqref{opt:individual_trading_rho}, we obtain
\begin{equation}
\label{opt:individual_trading_cuts}
\begin{aligned}
    &\text{sup} && \sum_{t \in \mathcal{T}(d)} \tilde{\pi}_t^{\rm{s}} \, \bm{1}^\top\bm{s}_t + \tilde{\pi}_t^{\rm{u}} \, \bm{1}^\top\bm{u}_t + \tilde{\pi}_t^{\rm{v}}  \, \bm{1}^\top\bm{v}_t \\
    &\text{s.t.} && \bm{s}_t, \bm{u}_t, \bm{v}_t \in \mathbb{R}^A, ~\bm{g}_t, \bm{p}_t, \bm{z}_t \in \mathcal{L}^A(\mathcal{F}_{[\downarrow(d),t]}^\rho) && \forall t \in \mathcal{T}(d)\\
    &	  && \bm{0} \leq \bm{u}_t, ~\bm{0} \leq \bm{v}_t, ~\bm{0} \leq \bm{g}_t \leq \overline{\bm{g}}_t, ~\bm{0} \leq \bm{p}_t \leq \overline{\bm{p}}_t, ~\bm{0} \leq \bm{z}_t && \forall t \in \mathcal{T}(d),~\text{$\mathbb{P}_{\vert \bm\xi_{[\Downarrow(d)]}}$-a.s.}\\
    &	  && \bm{s}_t + \revision{\kappa^{\rm u}_t}\rho_t^{\rm{u}} \bm{u}_t - \revision{\kappa^{\rm v}_t}\rho_t^{\rm{v}} \bm{v}_t = \bm{\eta}_t \circ \bm{g}_t - \bm{\zeta}_t \circ \bm{p}_t && \forall t \in \mathcal{T}(d),~\text{$\mathbb{P}_{\vert \bm\xi_{[\Downarrow(d)]}}$-a.s.}\\
    & 	  && \bm{s}_t - \revision{\overline{\rho}^{\rm{v}}_t}\bm{v}_t \geq - \bm{\zeta}_t \circ \overline{\bm{p}}_t && \forall t \in \mathcal{T}(d)\\
    &	  && \underline{\bm{w}}_t \leq \bm{w}_{d-1} + \sum_{\tau = \downarrow(d)}^t \bm{\phi}_\tau + \mathbf{M} ( \bm{g}_\tau - \bm{p}_\tau + \bm{z}_\tau ) \leq \overline{\bm{w}}_t && \forall t \in \mathcal{T}(d),~\text{$\mathbb{P}_{\vert \bm\xi_{[\Downarrow(d)]}}$-a.s.}\\
    &	  && \bm{w}_d \leq \bm{w}_{d-1} + \sum_{\tau \in \mathcal{T}(d)} \bm{\phi}_\tau + \mathbf{M} ( \bm{g}_\tau - \bm{p}_\tau + \bm{z}_\tau ) && \text{$\mathbb{P}_{\vert \bm\xi_{[\Downarrow(d)]}}$-a.s.},
\end{aligned}
\end{equation}
which  is equivalent to problem~\eqref{opt:individual_trading_rho} by virtue of Proposition~\ref{prop:down_cut}.

\revision{Next, we will show that problem~\eqref{opt:individual_trading_cuts} is equivalent to the reduced individual trader's problem~\eqref{opt:reduced_individual_trading}. Our argument critically relies on the following technical yet intuitive lemma, which asserts that there is no benefit in simultaneous generation and pumping.}





\revision{
\begin{lemma}
\label{lem:noupanddown}
The optimal value of the reduced planner's problem~\eqref{opt:reduced_individual_trading} does not decrease if we append the complementarity constraints $\bm{g}_t \circ \bm{p}_t =\bm 0$ for all $t \in \mathcal{T}(d)$.
\end{lemma}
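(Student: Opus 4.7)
{Proof Plan for Lemma~\ref{lem:noupanddown}}
The plan is to show that any feasible solution of~\eqref{opt:reduced_individual_trading} can be transformed into a feasible solution with the same objective value that additionally satisfies $\bm{g}_t \circ \bm{p}_t = \bm 0$ for all $t\in\mathcal{T}(d)$. Since appending constraints to a maximization problem can only weakly decrease its optimal value, the existence of such a transformation proves that the optimal value is preserved.

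The workhorse is a local swap. Suppose some feasible solution has $g_{t,a}>0$ and $p_{t,a}>0$ simultaneously at some hour $t\in\mathcal{T}(d)$ and arc $a\in\mathcal A$. Set $\delta:=\min\{g_{t,a},\, p_{t,a}\,\zeta_{t,a}/\eta_{t,a}\}>0$ and define
\[
    g'_{t,a} := g_{t,a}-\delta,\qquad
    p'_{t,a} := p_{t,a}-\delta\,\eta_{t,a}/\zeta_{t,a},\qquad
    z'_{t,a} := z_{t,a}+\delta\,(\zeta_{t,a}-\eta_{t,a})/\zeta_{t,a},
\]
while leaving every other decision variable untouched. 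By the choice of $\delta$, at least one of $g'_{t,a}$ and $p'_{t,a}$ equals zero, and both are non-negative; the thermodynamic assumption $\eta_{t,a}\le\zeta_{t,a}$ ensures $z'_{t,a}\ge z_{t,a}\ge 0$. Direct calculation shows
\[
    \eta_{t,a} g'_{t,a}-\zeta_{t,a} p'_{t,a} \;=\; \eta_{t,a}g_{t,a}-\zeta_{t,a} p_{t,a}
    \quad\text{and}\quad
    g'_{t,a}-p'_{t,a}+z'_{t,a} \;=\; g_{t,a}-p_{t,a}+z_{t,a},
\]
so both the energy delivery equation and the reservoir balance constraints (which depend only on $\bm{g}_\tau-\bm{p}_\tau+\bm{z}_\tau$) remain satisfied. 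The upper bounds on $g$ and $p$ stay respected because both quantities are reduced. Since the market bids $(\bm{s}_t,\bm{u}_t,\bm{v}_t)$ are untouched, the objective value and the valid cuts $\bm{s}_t-\bm{v}_t\ge -\bm{\zeta}_t\circ\overline{\bm{p}}_t$ are unaffected.

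Iterating this local swap over all hour-arc pairs with $g_{t,a}\,p_{t,a}>0$ produces a feasible solution satisfying complementarity everywhere. Because each swap modifies only $(g_{t,a},p_{t,a},z_{t,a})$ on a single hour-arc pair and leaves the aggregate $g-p+z$ that drives the reservoir dynamics unchanged, later swaps never reintroduce positivity into earlier ones, so the procedure terminates after at most $|\mathcal{T}(d)|\cdot |\mathcal A|$ steps. The resulting solution is feasible for the constrained version of~\eqref{opt:reduced_individual_trading} and attains the same objective value as the original, which yields the claim. I do not foresee a genuine obstacle: the argument is essentially the physical observation that simultaneous generation and pumping on the same arc wastes energy (as quantified by $\eta_{t,a}\le\zeta_{t,a}$), and that this waste can be absorbed into spillage on the same arc without disturbing any other constraint.
\qed
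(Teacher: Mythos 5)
Your proposal is correct and follows essentially the same route as the paper: your per-arc swap with $\delta=\min\{g_{t,a},\,p_{t,a}\zeta_{t,a}/\eta_{t,a}\}$ is exactly the paper's construction with $\Delta_{t,a}=\min\{g_{t,a}/\zeta_{t,a},\,p_{t,a}/\eta_{t,a}\}$ (i.e.\ $\delta=\zeta_{t,a}\Delta_{t,a}$), preserving the energy delivery term $\bm\eta_t\circ\bm{g}_t-\bm\zeta_t\circ\bm{p}_t$ and the net outflow $\bm{g}_t-\bm{p}_t+\bm{z}_t$ while dumping the slack into spillage. The paper simply applies the swap to all hour-arc pairs at once in vector form rather than iterating, which makes your termination remark unnecessary but not incorrect.
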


\begin{proof}{Proof}
Consider any feasible solution $\left\{ (\bm{s}_t, \bm{u}_t, \bm{v}_t, \bm{g}_t, \bm{p}_t, \bm{z}_t) \right\}_{t \in \mathcal{T}(d)}$ of problem~\eqref{opt:reduced_individual_trading}, and construct a new solution $\left\{ (\bm{s}_t, \bm{u}_t, \bm{v}_t, \bm{g}'_t, \bm{p}'_t, \bm{z}'_t) \right\}_{t \in \mathcal{T}(d)}$ with adjusted flow decisions
\begin{equation*}
	\bm{g}'_t  = \bm{g}_t - \bm\zeta_{t} \circ \bm\Delta_t,  \quad
	\bm{p}'_t = \bm{p}_t - \bm\eta_{t}  \circ \bm\Delta_t, \quad
	\bm{z}'_t = \bm{z}_t + (\bm\zeta_{t}  - \bm\eta_{t} ) \circ \bm\Delta_t,
\end{equation*}
where $\bm\Delta_t \in \mathbb{R}^A$ is defined through $\Delta_{t,a} = \min \{ g_{t,a}/\zeta_{t,a}, \, p_{t,a}/\eta_{t,a} \}$. This new solution preserves the market decisions and consequently the objective value of the original solution, and it is readily seen to satisfy the complementarity constraints $\bm{g}'_t \circ \bm{p}'_t = \bm 0$ for all $t \in \mathcal{T}(d)$. The claim thus follows if we can show that the new solution is feasible in~\eqref{opt:reduced_individual_trading}. To this end, note that $\bm\Delta_t \geq \bm{0}$, which implies that $\bm{g}'_t \leq \bm{g}_t  \leq \overline{\bm{g}}$, $\bm{p}'_t \leq \bm{p}_t  \leq \overline{\bm{p}}$, and $\bm{z}'_t \geq \bm{0}$, where the last inequality exploits our standing assumption that $\bm\zeta_t > \bm\eta_t$. Similarly, the inequalities $\bm\zeta_t \circ \bm\Delta_t \leq \bm{g}_t$ and $\bm\eta_t \circ \bm\Delta_t \leq \bm{p}_t$ ensure that $\bm{g}'_t \geq \bm{0}$ and $\bm{p}'_t \geq \bm{0}$. Finally, observe that $\bm{g}'_t - \bm{p}'_t + \bm{z}'_t = \bm{g}_t - \bm{p}_t + \bm{z}_t$ ({\em i.e.,} the net reservoir outflows remain unchanged) and $\bm\eta_t \circ \bm{g}'_t - \bm\zeta_t \circ \bm{p}'_t = \bm\eta_t \circ \bm{g}_t - \bm\zeta_t \circ \bm{p}_t$ ({\em i.e.,} the arc-wise net energy production quantities remain unchanged), which implies that the new solution satisfies the reservoir balance constraints and the energy delivery constraints, respectively. In summary, we have shown that the new solution is indeed feasible in~\eqref{opt:reduced_individual_trading}, and thus the claim follows. \qed
\end{proof}
}

\begin{proposition}
\label{prop:robusttodeterministic}
Under Approximation~\ref{apx:day_ahead_water_level}, the optimal values of~\eqref{opt:individual_trading_cuts} and \eqref{opt:reduced_individual_trading} are equal.
\end{proposition}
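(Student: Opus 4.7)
My plan is to establish the equality of optimal values by proving the two inequalities separately. In each direction I will construct a feasible solution of one problem from a feasible solution of the other. Since the objective of both \eqref{opt:individual_trading_cuts} and \eqref{opt:reduced_individual_trading} depends only on the bids $(\bm{s}_t, \bm{u}_t, \bm{v}_t)$, and my constructions will preserve the bids, matching objective values will be automatic and only the flow decisions $(\bm{g}_t, \bm{p}_t, \bm{z}_t)$ need to be redefined.

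For the inequality ``optimal value of \eqref{opt:individual_trading_cuts} $\le$ optimal value of \eqref{opt:reduced_individual_trading}'' I would exploit the paper's assumption that each of the tuples $(\rho_t^{\rm u}, \rho_t^{\rm v}) \in \{(0,0), (1,0), (0,1)\}$ has strictly positive conditional probability regardless of history. Consequently the scenario path along which reserve-up is called in every hour of day $d$ has strictly positive probability under $\mathbb{P}_{\vert \bm\xi_{[\Downarrow(d)]}}$, so the almost-sure constraints of \eqref{opt:individual_trading_cuts} must hold along this path. Extracting the realizations of $(\bm{g}_t, \bm{p}_t, \bm{z}_t)$ along this single path yields a deterministic solution that meets the delivery equation $\bm{s}_t + \bm{u}_t = \bm\eta_t \circ \bm{g}_t - \bm\zeta_t \circ \bm{p}_t$ as well as every reservoir bound of \eqref{opt:reduced_individual_trading}.

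For the reverse inequality, given a feasible solution of \eqref{opt:reduced_individual_trading} I would construct scenario-dependent flows $(\bm{g}_t^{\bm\rho}, \bm{p}_t^{\bm\rho}, \bm{z}_t^{\bm\rho})$ that depend only on the current activation $(\rho_t^{\rm u}, \rho_t^{\rm v})$ and that, arc by arc, (i) meet the scenario-specific delivery equation and (ii) preserve the reduced problem's net water flow $n_{t,a} := g_{t,a} - p_{t,a} + z_{t,a}$. Property (ii) is the linchpin: it forces the reservoir trajectories in every scenario to coincide pathwise with those of the deterministic reduced solution, so the reservoir bounds transfer automatically. Letting $E^\sigma_{t,a} := s_{t,a} + \rho^{\rm u} u_{t,a} - \rho^{\rm v} v_{t,a}$ and $\Delta := E^\sigma_{t,a} - (s_{t,a}+u_{t,a}) \leq 0$, the construction is: if $\eta_{t,a} g_{t,a} + \Delta \geq 0$, set $g^\sigma_{t,a} = g_{t,a} + \Delta/\eta_{t,a}$, $p^\sigma_{t,a} = p_{t,a}$, and $z^\sigma_{t,a} = z_{t,a} - \Delta/\eta_{t,a}$; otherwise set $g^\sigma_{t,a} = 0$, $p^\sigma_{t,a} = -E^\sigma_{t,a}/\zeta_{t,a}$, and $z^\sigma_{t,a} = n_{t,a} + p^\sigma_{t,a}$. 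Measurability with respect to $\mathcal{F}^\rho_{[\downarrow(d),t]}$ is immediate since the construction only uses the current activation.

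The main obstacle will be verifying admissibility of the pump-only branch, which by design is only triggered in the scenarios $(0,0)$ and $(0,1)$. The bound $p^\sigma_{t,a} \leq \overline{p}_{t,a}$ should follow from Proposition~\ref{prop:down_cut} combined with $u_{t,a}, v_{t,a} \geq 0$, which together imply $E^\sigma_{t,a} \geq s_{t,a} - v_{t,a} \geq -\zeta_{t,a}\overline{p}_{t,a}$ in both of these scenarios. The nonnegativity $z^\sigma_{t,a} \geq 0$ should follow from the chain $n_{t,a} \geq g_{t,a} - p_{t,a} \geq (\eta_{t,a}/\zeta_{t,a}) g_{t,a} - p_{t,a} = E^{(1,0)}_{t,a}/\zeta_{t,a} \geq E^\sigma_{t,a}/\zeta_{t,a}$, where the middle inequality invokes the thermodynamic assumption $\eta_{t,a} < \zeta_{t,a}$ together with $g_{t,a} \geq 0$, and the last uses $\Delta \leq 0$.
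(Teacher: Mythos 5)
Your proposal is correct and follows essentially the same route as the paper's proof: one direction extracts the (positive-probability) all-reserve-up scenario path to turn a feasible solution of \eqref{opt:individual_trading_cuts} into one of \eqref{opt:reduced_individual_trading}, and the converse builds activation-dependent flows that satisfy the scenario delivery equation while keeping $\bm{g}'_t-\bm{p}'_t\le\bm{g}_t-\bm{p}_t$, so the slack is absorbed by spill, the net outflows and hence the reservoir trajectories are unchanged, and the cut of Proposition~\ref{prop:down_cut} yields $p'_{t,a}\le\overline p_{t,a}$. The only notable difference is that your case split on $\eta_{t,a}g_{t,a}+\Delta\ge 0$ lets you bypass the complementarity reduction of Lemma~\ref{lem:noupanddown} that the paper invokes before its case analysis on $g_{t,a}=0$ versus $g_{t,a}>0$; your verification of both branches checks out.
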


\begin{proof}{Proof}
We will prove the proposition by showing that every feasible solution of problem~\eqref{opt:individual_trading_cuts} corresponds to a feasible solution of problem~\eqref{opt:reduced_individual_trading} with the same objective value and vice versa. \revision{Since the first correspondence follows from a simple adaptation of the respective argument in the proof of Proposition~\ref{prop:robusttodeterministic_collective1}, we omit its detailed proof for the sake of brevity.}

\revision{To prove that  every feasible solution of problem~\eqref{opt:reduced_individual_trading} corresponds to a feasible solution of problem~\eqref{opt:individual_trading_cuts},} we fix any feasible solution $\left\{ (\bm{s}_t, \bm{u}_t, \bm{v}_t, \bm{g}_t, \bm{p}_t, \bm{z}_t) \right\}_{t \in \mathcal{T}(d)}$ of the reduced trader's problem~\eqref{opt:reduced_individual_trading}. \revision{By Lemma~\ref{lem:noupanddown},} we may assume without loss of generality that this solution satisfies the complementarity constraints $\bm g_{t}\circ \bm p_{t} = \bm 0$ for all $t \in \mathcal{T}(d)$. We \revision{claim} that one can systematically construct flow decisions $\{ (\bm{g}'_t, \bm{p}'_t) \}_{t \in \mathcal{T}(d)}$ that satisfy
\begin{subequations}
\label{eq:alleqs}
\begin{align}
	\label{eq:nonanticipative} & {g}'_{t,a}, {p}'_{t,a} \in \mathcal{L}(\mathcal{F}_{[\downarrow(d),t]}^\rho) \\  
	\label{eq:boundg}&  0 \leq g'_{t,a} \leq \overline{g}_{t,a} & \text{$\mathbb{P}_{\vert \bm\xi_{[\Downarrow(d)]}}$-a.s.}  \\
	\label{eq:boundp}&  0 \leq p'_{t,a} \leq \overline{p}_{t,a} & \text{$\mathbb{P}_{\vert \bm\xi_{[\Downarrow(d)]}}$-a.s.}  \\
	\label{eq:delivery} & \eta_{t,a}  g'_{t,a} - \zeta_{t,a}  p'_{t,a} = s_{t,a} + \revision{\kappa^{\rm u}_t}\rho^{\rm u}_t u_{t,a} - \revision{\kappa^{\rm v}_t}\rho^{\rm v}_t v_{t,a} & \text{$\mathbb{P}_{\vert \bm\xi_{[\Downarrow(d)]}}$-a.s.}\\ 
	\label{eq:boundgp} & g'_{t,a} - p'_{t,a} \leq g_{t,a} - p_{t,a} & \text{$\mathbb{P}_{\vert \bm\xi_{[\Downarrow(d)]}}$-a.s.}
\end{align}
\end{subequations}
for all $t \in \mathcal{T}(d)$ and $a \in \mathcal{A}$. As $g_{t,a} p_{t,a} = 0$, we can distinguish two cases for each hour-arc pair $(t,a)$, which necessitate two different constructions of the corresponding flows $g'_{t,a}$ and $p'_{t,a}$. \revision{Specifically, we first assume that $g_{t,a}=0$ (Case~1), and then we assume that $g_{t,a}>0$ (Case~2). From now on all equalities and inequalities involving random variables are understood to hold $\mathbb{P}_{\vert \bm\xi_{[\Downarrow(d)]}}$-almost surely.}

\begin{paragraph}{\bf Case 1 ($g_{t,a} = 0$):} We set $g'_{t,a} = 0$ and $p'_{t,a} = -(s_{t,a} + \revision{\kappa^{\rm u}_t}\rho^{\rm u}_t u_{t,a} - \revision{\kappa^{\rm v}_t}\rho^{\rm v}_t v_{t,a})/\zeta_{t,a}$. It is easy to verify that $(g'_{t,a},p'_{t,a})$ satisfies~\eqref{eq:boundg} and~\eqref{eq:delivery}. The non-anticipativity constraints \eqref{eq:nonanticipative} are also met because \revision{the reserve allocations $\kappa^{\rm u}_t$ and $\kappa^{\rm v}_t$ are revealed in hour $\downarrow(t)$, whereas} the reserve activations~$\rho^{\rm u}_t$ and~$\rho^{\rm v}_t$ are revealed in hour $t$. In order to establish~\eqref{eq:boundp} and~\eqref{eq:boundgp}, we first observe that the given feasible solution of the reduced trader's problem~\eqref{opt:reduced_individual_trading} satisfies
\begin{equation*}
	s_{t,a} + \revision{\overline{\rho}^{\rm{u}}_t} u_{t,a} = \eta_{t,a} g_{t,a} - \zeta_{t,a} p_{t,a} \quad \Longrightarrow \quad p_{t,a} = -(s_{t,a} + \revision{\overline{\rho}^{\rm{u}}_t} u_{t,a})/\zeta_{t,a},
\end{equation*}
where the implication holds because $g_{t,a} = 0$. \revision{As $\rho^{\rm u}_t\le \overline{\rho}^{\rm{u}}_t$ and $\kappa^{\rm u}_t \leq 1$ while $\rho^{\rm v}_t$, $\kappa^{\rm v}_t$, $u_{t,a}$ and $v_{t,a}$ are non-negative}, we may thus conclude that $p_{t,a} \leq p'_{t,a}$. Hence, the constructed pumping decision $p'_{t,a}$ meets requirement~\eqref{eq:boundgp}. Finally, requirement~\eqref{eq:boundp} is satisfied because $0\leq p_{t,a} \leq p'_{t,a}$ and because
\begin{equation*}
	p'_{t,a} \; = \; -(s_{t,a} + \revision{\kappa^{\rm u}_t}\rho^{\rm u}_t u_{t,a} - \revision{\kappa^{\rm v}_t}\rho^{\rm v}_t v_{t,a})/\zeta_{t,a} \;\leq\; -(s_{t,a} - \revision{\overline{\rho}^{\rm{v}}_t} v_{t,a})/\zeta_{t,a} \; \leq \; \overline{p}_{t,a},
\end{equation*}
where the second inequality follows from the valid cut derived in Proposition~\ref{prop:down_cut}, which constitutes one of the constraints of the reduced trader's problem~\eqref{opt:reduced_individual_trading}. Thus, $g'_{t,a}$ and $p'_{t,a}$ satisfy~\eqref{eq:alleqs}.
\end{paragraph}

\begin{paragraph}{\bf Case 2 ($g_{t,a} > 0$):}
We set $g'_{t,a} = (s_{t,a} + \revision{\kappa^{\rm u}_t}\rho^{\rm u}_t u_{t,a} - \revision{\kappa^{\rm v}_t}\rho^{\rm v}_t v_{t,a})^+/\eta_{t,a}$ and $p'_{t,a} = (s_{t,a} + \revision{\kappa^{\rm u}_t}\rho^{\rm u}_t u_{t,a} - \revision{\kappa^{\rm v}_t}\rho^{\rm v}_t v_{t,a})^-/\zeta_{t,a}$. These flow decisions manifestly satisfy the non-anticipativity constraints~\eqref{eq:nonanticipative}. 

If $s_{t,a} + \revision{\kappa^{\rm u}_t}\rho^{\rm u}_t u_{t,a} - \revision{\kappa^{\rm v}_t}\rho^{\rm v}_t v_{t,a} \leq 0$, then we have $g'_{t,a}=0$ and $p'_{t,a}= -(s_{t,a} + \revision{\kappa^{\rm u}_t}\rho^{\rm u}_t u_{t,a} - \revision{\kappa^{\rm v}_t}\rho^{\rm v}_t v_{t,a})/\zeta_{t,a}$, and one can proceed as in Case~1 to show that the requirements~\eqref{eq:boundg}--\eqref{eq:boundgp} are met. From now on assume that $s_{t,a} + \revision{\kappa^{\rm u}_t}\rho^{\rm u}_t u_{t,a} - \revision{\kappa^{\rm v}_t}\rho^{\rm v}_t v_{t,a} > 0$, in which case $g'_{t,a}= (s_{t,a} + \revision{\kappa^{\rm u}_t}\rho^{\rm u}_t u_{t,a} - \revision{\kappa^{\rm v}_t}\rho^{\rm v}_t v_{t,a})/\eta_{t,a}$ and $p'_{t,a}=0$. Note first that the requirements~\eqref{eq:boundp} and~\eqref{eq:delivery} are trivially met. We further find that
\begin{equation}
	\label{eq:boundp'}
	g'_{t,a} \;=\; (s_{t,a} + \revision{\kappa^{\rm u}_t}\rho^{\rm u}_t u_{t,a} - \revision{\kappa^{\rm v}_t}\rho^{\rm v}_t v_{t,a})/\eta_{t,a} \;\leq\; (s_{t,a} + \revision{\overline{\rho}^{\rm{u}}_t} u_{t,a})/\eta_{t,a} \;=\; g_{t,a} \; \leq \;
	\overline{g}_{t,a},
\end{equation}
where the second equality follows from the constraint~$s_{t,a} + \revision{\overline{\rho}^{\rm{u}}_t} u_{t,a} = \eta_{t,a} g_{t,a} - \zeta_{t,a} p_{t,a}$ of problem~\eqref{opt:reduced_individual_trading} and the complementarity condition $g_{t,a} p_{t,a} = 0$, which implies that $p_{t,a} = 0$. Hence, requirement~\eqref{eq:boundg} is satisfied. Finally, as $p'_{t,a} =p_{t,a} = 0$, the inequality~\eqref{eq:boundp'} also implies that requirement~\eqref{eq:boundgp} is met.
Thus, $g'_{t,a}$ and $p'_{t,a}$ satisfy again all of the requirements listed in~\eqref{eq:alleqs}.
\end{paragraph}

Given the flow decisions $\{ (\bm{g}'_t, \bm{p}'_t) \}_{t \in \mathcal{T}(d)}$ constructed above, we are now ready to introduce compatible spill decisions $\bm{z}'_t = \bm{z}_t  + (\bm{g}_t - \bm{p}_t)- (\bm{g}'_t - \bm{p}'_t)$ for all $t\in\mathcal T(d)$. In the remainder of the proof we will demonstrate that the constructed solution $\left\{ (\bm{s}_t, \bm{u}_t, \bm{v}_t, \bm{g}'_t, \bm{p}'_t, \bm{z}'_t) \right\}_{t \in \mathcal{T}(d)}$ is feasible in~\eqref{opt:individual_trading_cuts}.

Note first that we need not be concerned with the constraints that only involve the market decisions $\left\{ (\bm{s}_t, \bm{u}_t, \bm{v}_t) \right\}_{t \in \mathcal{T}(d)}$, which are trivially  satisfied because $\{ (\bm{s}_t, \bm{u}_t, \bm{v}_t, \bm g_t, \bm p_t, \bm z_t)\}_{t \in \mathcal{T}(d)}$ is feasible in problem~\eqref{opt:reduced_individual_trading}. Moreover, all constraints of problem~\eqref{opt:individual_trading_cuts} that do {\em not} involve the spill decisions are satisfied because of~\eqref{eq:alleqs}. It remains to verify that the spill decisions are non-anticipative as well as non-negative and that the reservoir balance constraints are satisfied. To this end, we note first that $\bm z'_t$ inherits non-anticipativity from $\bm g'_t$ and $\bm p'_t$. Similarly, $\bm z'_t$ inherits non-negativity from~$\bm z_t$ thanks to~\eqref{eq:boundgp}. 
Finally, we highlight that $(\bm{g}'_t, \bm{p}'_t, \bm{z}'_t)$ impacts the reservoir balance constraints only through the net water outflows $\bm{g}'_t - \bm{p}'_t + \bm{z}'_t$, which coincide with $\bm{g}_t - \bm{p}_t + \bm{z}_t$ by the construction of $\bm z'_t$. This guarantees via the feasibility of $\left\{ (\bm{s}_t, \bm{u}_t, \bm{v}_t, \bm{g}_t, \bm{p}_t, \bm{z}_t) \right\}_{t \in \mathcal{T}(d)}$  in~\eqref{opt:reduced_individual_trading} that the reservoir balance constraints are satisfied. Therefore, $\left\{ (\bm{s}_t, \bm{u}_t, \bm{v}_t, \bm{g}'_t, \bm{p}'_t, \bm{z}'_t) \right\}_{t \in \mathcal{T}(d)}$ is indeed feasible in~\eqref{opt:individual_trading_cuts}.

The claim now follows because $\left\{ (\bm{s}_t, \bm{u}_t, \bm{v}_t, \bm{g}_t, \bm{p}_t, \bm{z}_t) \right\}_{t \in \mathcal{T}(d)}$ and $\left\{ (\bm{s}_t, \bm{u}_t, \bm{v}_t, \bm{g}'_t, \bm{p}'_t, \bm{z}'_t) \right\}_{t \in \mathcal{T}(d)}$ share the same market decisions, which implies that these two solutions attain the same objective values in their respective optimization problems. Thus, we can always find a feasible solution of \eqref{opt:individual_trading_cuts} that attains the same objective value as any feasible solution of the reduced trader's problem~\eqref{opt:reduced_individual_trading}. \qed
\end{proof}
~\\[-10mm]

In summary, the results of this section show that, as long as the end-of-day reservoir levels are fixed a day in advance, the infinite-dimensional trader's problem~\eqref{opt:individual_trading} is equivalent to the tractable linear program~\eqref{opt:reduced_individual_trading}, whose size scales linearly with the number $A$ of arcs in the reservoir system and the number $H$ of hours per day. This key insight is formalized in the following theorem.
\begin{theorem}
\label{thm:all_individual}
Under Approximation~\ref{apx:day_ahead_water_level}, the optimal values of~\eqref{opt:individual_trading} and \eqref{opt:reduced_individual_trading} are equal.
\end{theorem}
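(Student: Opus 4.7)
The plan is to observe that Theorem~\ref{thm:all_individual} is an immediate consequence of the three preceding results and therefore requires essentially no new work, only the careful chaining of the equalities established in Propositions~\ref{prop:rho_restriction}, \ref{prop:down_cut} and~\ref{prop:robusttodeterministic}. I would write this as a short transitive argument that strings together the three optimal-value equalities.

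First, I would invoke Proposition~\ref{prop:rho_restriction}, which asserts that under Approximation~\ref{apx:day_ahead_water_level}, the optimal value of the original individual trader's problem~\eqref{opt:individual_trading} coincides with that of~\eqref{opt:individual_trading_rho}. This takes care of the step from an infinite-dimensional stochastic program to one where the wait-and-see variables need only adapt to the reserve-activation filtration $\mathcal{F}^{\rho}_{[\downarrow(d),t]}$.

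Next, I would argue that problems~\eqref{opt:individual_trading_rho} and~\eqref{opt:individual_trading_cuts} share the same feasible set and therefore the same optimal value: they differ only by the deterministic inequality $\bm{s}_t - \bm{v}_t \geq -\bm{\zeta}_t \circ \overline{\bm{p}}_t$ for $t \in \mathcal{T}(d)$, which Proposition~\ref{prop:down_cut} certifies as a valid inequality for every feasible solution of~\eqref{opt:individual_trading_rho}. Finally, I would apply Proposition~\ref{prop:robusttodeterministic}, which equates the optimal value of the cut-augmented stochastic program~\eqref{opt:individual_trading_cuts} with that of the deterministic linear program~\eqref{opt:reduced_individual_trading}. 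Combining the three equalities yields the desired identity between the optimal values of~\eqref{opt:individual_trading} and~\eqref{opt:reduced_individual_trading}.

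There is no genuine obstacle in this proof, since all the heavy lifting has already been carried out: the interchangeability argument underlying Proposition~\ref{prop:rho_restriction}, the case analysis for constructing admissible $(\bm{g}'_t,\bm{p}'_t,\bm{z}'_t)$ in Proposition~\ref{prop:robusttodeterministic}, and the valid-cut derivation in Proposition~\ref{prop:down_cut} together cover every nontrivial step. The only thing worth being careful about in the write-up is to state explicitly that Approximation~\ref{apx:day_ahead_water_level} is required only for the first and third links of the chain (Propositions~\ref{prop:rho_restriction} and~\ref{prop:robusttodeterministic}), whereas the middle link is unconditional. Consequently the proof can be written in two or three sentences, simply reading off the chain of equalities.
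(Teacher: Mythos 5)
Your proposal is correct and matches the paper's own argument, which likewise deduces Theorem~\ref{thm:all_individual} as an immediate consequence of Propositions~\ref{prop:rho_restriction}, \ref{prop:down_cut} and~\ref{prop:robusttodeterministic} by chaining the three optimal-value equalities. Your remark that only the first and third links need Approximation~\ref{apx:day_ahead_water_level} is accurate and a nice touch, but otherwise there is nothing to add.
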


\begin{proof}{Proof}
This is an immediate consequence of Propositions~\ref{prop:rho_restriction},~\ref{prop:down_cut} and \ref{prop:robusttodeterministic}. \qed
\end{proof}
~\\[-10mm]

\revision{The relations between the various optimization problems studied so far in this appendix are illustrated in the upper row of Figure~\ref{fig:proofplan2}, which uses the same conventions as Figure~\ref{fig:proofplan}. To conclude the appendix, it remains to be shown that the optimal objective value of \eqref{opt:reduced_collective_trading} is upper bounded by that of~\eqref{opt:reduced_individual_trading}. This is established in the next and final proposition.} 

\begin{figure}[h]
\begin{center}
\begin{tikzpicture}[scale=0.9, every node/.style={scale=0.9}, squarednode/.style={rectangle, draw=black, fill=gray!5, very thick, minimum width=15mm, minimum height=10mm}, node distance=4.7cm]
	\node[squarednode] (I1) {\eqref{opt:individual_trading}};
	\node[squarednode] (I2) [right of=I1] {\eqref{opt:individual_trading_rho}};
	\node[squarednode] (I3) [right of=I2] {\eqref{opt:individual_trading_cuts}};
	\node[squarednode] (I4) [right of=I3] {\eqref{opt:reduced_individual_trading}};
	
	\node[squarednode] (C1) [below of=I1] {\eqref{opt:collective_trading}};
	\node[squarednode] (C2) [below of=I2] {\eqref{opt:collective_trading_rho}};
	\node[squarednode] (C3) [below of=I3] {\eqref{opt:collective_trading_cuts}};
	\node[squarednode] (C4) [below of=I4] {\eqref{opt:reduced_collective_trading}};
	
	\draw[-{Latex[length=2mm]}] ($(I1.east)+(0mm,1mm)$) -- node[above]{Proposition~\ref{prop:rho_restriction}} ($(I2.west)+(0mm,1mm)$);
	\draw[-{Latex[length=2mm]}] ($(I2.east)+(0mm,1mm)$) -- node[above]{Proposition~\ref{prop:down_cut}} ($(I3.west)+(0mm,1mm)$);
	
	\draw[{Latex[length=2mm]}-{Latex[length=2mm]}] ($(I3.east)+(0mm,0mm)$) -- node[above]{Proposition~\ref{prop:robusttodeterministic}} ($(I4.west)+(0mm,0mm)$);
	
	\draw[{Latex[length=2mm]}-,dashed] ($(I1.east)+(0mm,-1mm)$) --  ($(I2.west)+(0mm,-1mm)$);
	\draw[{Latex[length=2mm]}-,dashed] ($(I2.east)+(0mm,-1mm)$) --  ($(I3.west)+(0mm,-1mm)$);	
	
	\draw[-{Latex[length=2mm]}, dashed] (I1.south) -- (C1.north);
	\draw[-{Latex[length=2mm]}, dashed] (I2.south) -- (C2.north);
	\draw[-{Latex[length=2mm]}, dashed] (I3.south) -- (C3.north);
	
	\draw[-{Latex[length=2mm]}, dashed] ($(I4.south)+(+1mm,0mm)$) -- ($(C4.north)+(+1mm,0mm)$);
	
	\draw[{Latex[length=2mm]}-] ($(I4.south)+(-1mm,0mm)$) --node[left]{Proposition~\ref{prop:robusttodeterministic_collective2}} ($(C4.north)+(-1mm,0mm)$);
	
	\draw[-{Latex[length=2mm]}] ($(C1.east)+(0mm,1mm)$) -- node[above]{Proposition~\ref{prop:rho_restriction_collective}} ($(C2.west)+(0mm,1mm)$);
	\draw[-{Latex[length=2mm]}] ($(C2.east)+(0mm,1mm)$) -- node[above]{Proposition~\ref{prop:down_cut_collective}} ($(C3.west)+(0mm,1mm)$);
	\draw[-{Latex[length=2mm]}] ($(C3.east)+(0mm,0mm)$) -- node[above]{Proposition~\ref{prop:robusttodeterministic_collective1}} ($(C4.west)+(0mm,0mm)$);
	
	\draw[{Latex[length=2mm]}-,dashed] ($(C1.east)+(0mm,-1mm)$) --  ($(C2.west)+(0mm,-1mm)$);
	\draw[{Latex[length=2mm]}-,dashed] ($(C2.east)+(0mm,-1mm)$) --  ($(C3.west)+(0mm,-1mm)$);	
\end{tikzpicture}
\end{center}
	\caption{Illustration of the relations between different variants of the trader's problem. Dashed arcs represent trivial relaxations, and solid arcs represent non-trivial implications proved in the referenced propositions.}
	\label{fig:proofplan2}
\end{figure}
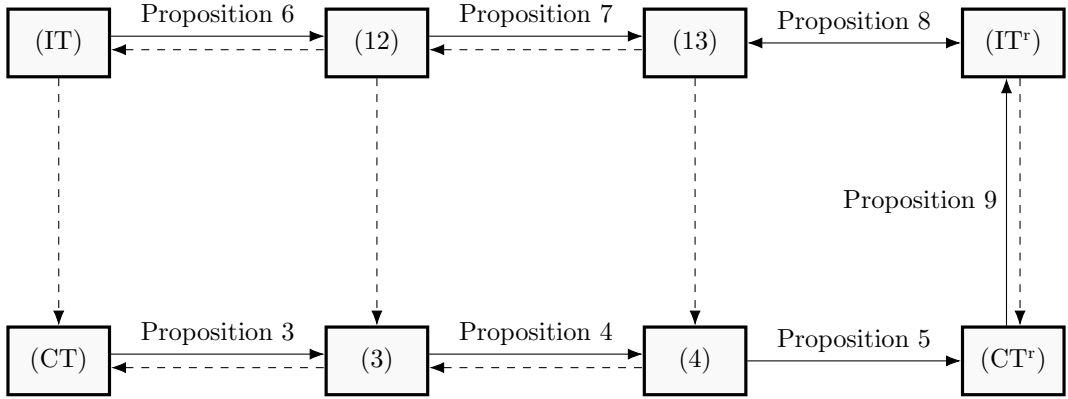

~\\[-11mm]

\revision{
\begin{proposition}
\label{prop:robusttodeterministic_collective2}
Under Approximation~\ref{apx:day_ahead_water_level}, the optimal value of problem~\eqref{opt:reduced_individual_trading} is larger than or equal to that of problem~\eqref{opt:reduced_collective_trading}.
\end{proposition}
}

\begin{proof}{Proof}
The claim follows if we can show that for every feasible solution of problem~\eqref{opt:reduced_collective_trading} there exists a feasible solution of~\eqref{opt:reduced_individual_trading} that attains the same objective function value. To achieve this, we select an arbitrary feasible solution $\{ (s_t, u_t, v_t, \bm{g}_t, \bm{p}_t, \bm{z}_t) \}_{t \in \mathcal{T}(d)}$ of problem~\eqref{opt:reduced_collective_trading} and aim to show that problem~\eqref{opt:reduced_individual_trading} admits a feasible solution $\{ (\bm{s}'_t, \bm{u}'_t, \bm{v}'_t, \bm{g}_t, \bm{p}_t, \bm{z}_t) \}_{t \in \mathcal{T}(d)}$ with the same flow decisions that satisfies $\bm{1}^\top \bm{s}'_t = s_t$, $\bm{1}^\top \bm{u}'_t = u_t$ and $\bm{1}^\top \bm{v}'_t = v_t$ for all $t \in \mathcal{T}(d)$. These identities ensure that the two solutions adopt the same objective values in their respective optimization problems.

As the flow decisions $\{ (\bm{g}_t, \bm{p}_t, \bm{z}_t )\}_{t \in \mathcal{T}(d)}$ are preserved, their upper and lower bounds as well as the reservoir level constraints are trivially satisfied. It thus suffices to show that there exist individual market bids $\{ (\bm{s}'_t, \bm{u}'_t, \bm{v}'_t )\}_{t \in \mathcal{T}(d)}$ that are consistent with the prescribed collective market bids $\{ (s_t, u_t, v_t) \}_{t \in \mathcal{T}(d)}$ and that satisfy all remaining constraints of problem~\eqref{opt:reduced_individual_trading}, that is, the non-negativity constraints, the energy delivery constraints and the valid inequalities from Proposition~\ref{prop:down_cut_collective}. Formally, such individual market bids exist if and only if the optimal value of the feasibility problem
\begin{equation}
\label{eq:primal-feasibility-problem}
\begin{aligned}
    &\text{min} && 0\\
    &\text{s.t.} && \bm{s}'_t \in \mathbb{R}^A,~\bm{u}'_t \in \mathbb{R}_+^A,~\bm{v}'_t \in \mathbb{R}_+^A \\
    &	  && \bm{1}^\top \bm{s}'_t = s_t,~ \bm{1}^\top \bm{u}'_t = u_t,~ \bm{1}^\top \bm{v}'_t = v_t \\
    &	  && \bm{s}'_t + \revision{\overline{\rho}^{\rm u}_t} \bm{u}'_t = \bm\eta_t \circ \bm{g}_t - \bm\zeta_t \circ \bm{p}_t \\
    &	  && \bm{s}'_t - \revision{\overline{\rho}^{\rm v}_t} \bm{v}'_t \geq -\bm\zeta_t \circ \overline{\bm{p}}_t
\end{aligned}
\end{equation}
vanishes for each hour $t \in \mathcal{T}(d)$, provided that $\{ (s_t, u_t, v_t, \bm{g}_t, \bm{p}_t, \bm{z}_t) \}_{t \in \mathcal{T}(d)}$ is feasible in~\eqref{opt:reduced_collective_trading}.  Assigning dual variables $\alpha, \beta, \gamma \in \mathbb{R}$ to the consistency constraints for the market bids, $\bm\lambda \in \mathbb{R}^A$ to the energy delivery constraints and $\bm\mu \in \mathbb{R}^A_+$ to the valid inequalities from Proposition~\ref{prop:down_cut_collective}, the linear program dual to the above feasibility problem can be represented as
\begin{equation}
\label{eq:dual-feasibility-problem}
\begin{aligned}
    &\text{max} && \textstyle \alpha s_t + \beta u_t + \gamma v_t + \bm\lambda^\top (\bm\eta_t \circ \bm{g}_t - \bm\zeta_t \circ \bm{p}_t) - \bm\mu^\top ( \bm\zeta_t \circ \overline{\bm{p}}_t )\\
    &\text{s.t.} && \alpha, \beta, \gamma \in \mathbb{R},~\bm\lambda \in \mathbb{R}^A,~\bm\mu \in \mathbb{R}^A_+ \\
    & && \alpha \bm{1} + \bm\lambda + \bm\mu = \bm 0 \\
    & && \beta \bm{1} + \revision{\overline{\rho}^{\rm u}_t} \bm\lambda \leq \bm 0\\
    & && \gamma \bm{1} - \revision{\overline{\rho}^{\rm v}_t} \bm\mu \leq \bm 0.
\end{aligned}
\end{equation}
Strong duality holds because the feasible set of the dual problem contains the origin. In the remainder we will argue that the objective value of any feasible solution $(\alpha, \beta, \gamma, \bm\lambda, \bm\mu)$ of the dual linear program~\eqref{eq:dual-feasibility-problem} is bounded above by $0$. To this end, we define $\underline\mu = \min_{a \in \mathcal{A}} \mu_a \ge 0$ and note that combining the first two constraints in~\eqref{eq:dual-feasibility-problem} yields $(\beta - \revision{\overline{\rho}^{\rm u}_t} \alpha)\bm{1} - \revision{\overline{\rho}^{\rm u}_t}\bm\mu \leq \bm{0}$, which in turn implies that $\beta - \revision{\overline{\rho}^{\rm u}_t} \alpha - \revision{\overline{\rho}^{\rm u}_t} \underline\mu \leq 0$. We thus find that the objective value in~\eqref{eq:dual-feasibility-problem} satisfies
\begin{equation*}
\begin{aligned}
	&\alpha s_t + \beta u_t + \gamma v_t + \bm\lambda^\top (\bm\eta_t \circ \bm{g}_t - \bm\zeta_t \circ \bm{p}_t) - \bm\mu^\top ( \bm\zeta_t \circ \overline{\bm{p}}_t ) \\
	= \quad &\alpha s_t + \beta u_t + \gamma v_t + (\bm\lambda + \bm\mu)^\top (\bm\eta_t \circ \bm{g}_t - \bm\zeta_t \circ \bm{p}_t) - \bm\mu^\top (\bm\eta_t \circ \bm{g}_t + \bm\zeta_t \circ (\overline{\bm{p}}_t - \bm{p}_t)) \\
	\leq \quad &\alpha s_t + \beta u_t + \gamma v_t - \alpha (\bm\eta_t^\top \bm{g}_t - \bm\zeta_t^\top \bm{p}_t) - \underline\mu (\bm\eta_t^\top \bm{g}_t + \bm\zeta_t^\top (\overline{\bm{p}}_t - \bm{p}_t))  \\
	= \quad &\alpha s_t + \beta u_t + \gamma v_t - \alpha (s_t + \revision{\overline{\rho}^{\rm u}_t} u_t) - \underline\mu (s_t + \revision{\overline{\rho}^{\rm u}_t} u_t + \bm\zeta_t^\top \overline{\bm{p}}_t) \\
	= \quad &(\beta - \revision{\overline{\rho}^{\rm u}_t} \alpha - \revision{\overline{\rho}^{\rm u}_t}\underline\mu)u_t  + \gamma v_t  - \underline\mu (s_t + \bm\zeta_t^\top \overline{\bm{p}}_t) \\
	\leq \quad &(\beta - \revision{\overline{\rho}^{\rm u}_t} \alpha - \revision{\overline{\rho}^{\rm u}_t} \underline\mu)u_t  + \underline\mu (\revision{\overline{\rho}^{\rm v}_t} v_t - s_t - \bm\zeta_t^\top \overline{\bm{p}}_t) ~\leq ~ 0,
\end{aligned}
\end{equation*}
where the first inequality exploits the relations $\bm\lambda + \bm\mu = -\alpha\bm{1}$ (by the feasibility of $\alpha$, $\bm \lambda$ and $\bm \mu$ in~\eqref{eq:dual-feasibility-problem}) and $\underline\mu \bm{1} \leq \bm\mu$ (by the construction of $\underline\mu$) together with the non-negativity of $\bm{g}_t$ and $\overline{\bm{p}}_t - \bm{p}_t$ (by the  feasibility of $\bm g_t$ and $\bm p_t$ in~\eqref{opt:reduced_collective_trading}). The second equality follows from the energy delivery constraints in~\eqref{opt:reduced_collective_trading}, and the second inequality holds because $\gamma \leq \revision{\overline{\rho}^{\rm v}_t}\underline\mu$ (by the feasibility of $\gamma$ in~\eqref{eq:dual-feasibility-problem} and the construction of $\underline\mu$) and $v_t \geq 0$ (by the feasibility of $v_t$ in~\eqref{opt:reduced_collective_trading}). The last inequality, finally, follows from our earlier observations that $\beta - \revision{\overline{\rho}^{\rm u}_t} \alpha - \revision{\overline{\rho}^{\rm u}_t} \underline\mu \leq 0$ and $\underline\mu \geq 0$ combined with the relations $u_t \geq 0$ and $\revision{\overline{\rho}^{\rm v}_t}v_t - s_t - \bm\zeta_t^\top \overline{\bm{p}}_t \leq 0$ (by the feasibility of $s_t$, $u_t$ and $v_t$ in~\eqref{opt:reduced_collective_trading}). 

In conclusion, we have demonstrated the that optimal value of the dual feasibility problem~\eqref{eq:dual-feasibility-problem}---and thus also that of its primal counterpart---must vanish. As our arguments hold for any feasible solution $\{ (s_t, u_t, v_t, \bm{g}_t, \bm{p}_t, \bm{z}_t) \}_{t \in \mathcal{T}(d)}$ of~\eqref{opt:reduced_collective_trading} and for any $t\in\mathcal T(d)$, the claim follows. \qed
\end{proof}

\end{APPENDIX}

\end{document}